\newtheorem{theorem}{Theorem}
\newtheorem{proposition}{Proposition}
\newtheorem{lemma}[proposition]{Lemma}
\theoremstyle{definition}
\theoremstyle{remark}
\newtheorem{remark}[proposition]{Remark}
\numberwithin{equation}{section}
\numberwithin{proposition}{section}
\newcommand\R{{\ensuremath {\mathbb R} }}
\newcommand\C{{\ensuremath {\mathbb C} }}
\renewcommand{\Re}{\mathrm{Re}}
\renewcommand{\Im}{\mathrm{Im}}
\renewcommand\phi{\varphi}
\renewcommand\le{\leqslant}
\renewcommand\epsilon{\varepsilon}
\renewcommand\hat{\widehat}
\renewcommand\tilde{\widetilde}
\renewcommand\bar{\overline}
\newcommand{\gH}{\mathfrak{H}}
\newcommand{\gS}{\mathfrak{S}}
\newcommand{\cD}{\mathcal{D}}
\newcommand\ii{{\ensuremath {\infty}}}
\renewcommand\d[1]{{\ensuremath{\,\text{d}#1}}}
\newcommand{\cC}{\mathcal{C}}
\newcommand{\cF}{\mathcal{F}}
\DeclareMathOperator{\im}{Im}
\DeclareMathOperator{\re}{Re}
\DeclareMathOperator{\Tr}{Tr}
\DeclareMathOperator{\tr}{Tr}
\newcommand{\dps}{\displaystyle}
\title{The Dirac-Klein-Gordon system in the strong coupling limit}
\author{Jonas Lampart}
\address{Jonas Lampart, CNRS \& Laboratoire Interdisciplinaire Carnot de Bourgogne, UMR CNRS  6303, Universit\'e de Bourgogne--Franche-Comt\'e, 9, avenue Alain Savary, 21078 Dijon Cedex, France.}
\email{lampart@math.cnrs.fr}
\author{Lo\"ic Le Treust}
\address{Lo\"ic Le Treust, Institut de Math\'ematiques de Marseille, UMR CNRS 7373, Aix-Marseille Universit\'e, Rue Frédéric Joliot-Curie, 13453 MARSEILLE Cedex 13}
\email{loic.le-treust@univ-amu.fr}
\author{Simona Rota Nodari}
\address{Simona Rota Nodari, Laboratoire Jean Alexandre Dieudonn\'e, UMR CNRS 7351,
            Université Côte d'Azur, 28, avenue Valrose, 06108 Nice Cedex 2, France.}
\email{simona.rotanodari@univ-cotedazur.fr}
\author{Julien Sabin}
\address{Julien Sabin, Centre de mathématiques Laurent Schwartz, UMR CNRS 7640, École polytechnique, 91128 Palaiseau Cedex, France}
\email{julien.sabin@polytechnique.edu}
\date{\today}
\begin{document}

\begin{abstract}
We study the Dirac equation coupled to scalar and vector Klein-Gordon fields in the limit of strong coupling and large masses of the fields. We prove convergence of the solutions to those of a cubic non-linear Dirac equation, given that the initial spinors coincide. This shows that in this parameter regime, which is relevant to the relativistic mean-field theory of nuclei, the retarded interaction is well approximated by an instantaneous, local self-interaction. We generalize this result to a many-body Dirac-Fock equation on the space of Hilbert-Schmidt operators. 
\end{abstract}

\maketitle
\section{Introduction}

In this article we consider a coupled system of partial differential equations consisting of a Dirac equation and equations of Klein-Gordon type. We will be interested in the asymptotic behaviour of solutions in the regime of large coupling constants, and large masses for the Klein-Gordon fields.
This is motivated by the equations of the relativistic mean-field theory of nuclei~\cite{Ring-96}. In this context, the Dirac equation models the dynamics of a nucleon and the Klein-Gordon equations the nuclear forces in a mean-field approximation. The large coupling constants and masses reflect the fact that the nuclear forces are strong but of short range.   
We will show that in this regime the solutions behave like the solutions to a cubic non-linear Dirac equation, i.e. the interaction mediated by the fields becomes an instantaneous self-interaction of the Dirac spinor.

We will also consider a variant of the equations which describes  the interaction of many particles with the Klein-Gordon fields in the Dirac-Fock formalism. 
In addition to the modelling of systems with several nucleons,  this is relevant in the context of the Dirac equation as its physical interpretation naturally leads to a many-particle formalism.
In fact, the Dirac operator is not bounded from below, which is problematic from a physical point of view, as it suggests that an arbitrary amount of energy can be extracted from a particle it describes. This led Dirac to postulate the existence of the positron and later to the development of quantum electrodynamics (QED)~\cite{dirac1928quantum,dirac1928quantumII}. Dirac's reasoning was that all states of negative energy must be already occupied, making them inaccessible to other (fermionic) particles, and a positron would correspond to a missing particle in the ``Dirac sea'' of negative-energy states.

Mathematically speaking, Dirac's reasoning in terms of particles occupying the energies of the one-particle spectrum means that one should describe the state of the total system by a projector to the space of occupied states, or a more general operator called the density matrix. A similar formalism is recovered from QED in the mean-field approximation~\cite{chaix1989}.
In general, these operators will have infinite rank, which can lead to a multitude of problems with the corresponding equations. Solving these problems in the most general case would go beyond the scope of this article, but we make a first step in this direction by considering the natural generalization of the Dirac-Klein-Gordon equations to the class of Hilbert-Schmidt operators. In this context we derive an approximation result analogous to the one-particle case.

\subsection{The one-body problem}
Consider the following system of a Dirac equation coupled to a scalar field $S$ and a vector field $\omega$, both satisfying Klein-Gordon equations,
\begin{equation}\label{eqRMFTgeneral}
    \left\{\begin{aligned}
        &i\partial_t \Psi=\bm \alpha \cdot (-i\nabla - \bm \omega)\Psi +\beta (m+S)\Psi +V\Psi\\
        &(\partial^2_t-\Delta+m_\sigma^2)S=-g_\sigma^2\rho_s(\Psi)\\
        &(\partial^2_t-\Delta+m_\omega^2)\omega=g_\omega^2 J(\Psi)
    \end{aligned}
    \right..
\end{equation}
These are the natural equations for a relativistic quantum particle, described by the wave-function $\Psi:\R\times \R^3\to \C^4$, coupled to relativistic (massive) classical fields $S:\R\times \R^3\to \R$ and $\omega=(V,\bm \omega):\R\times \R^3\to \R\times \R^3$. 
Here, $(\beta,\bm \alpha)$ are the complex $4\times 4$ Dirac matrices defined by
\begin{equation}
    \beta=\begin{pmatrix}
        {\rm Id}_{\C^2} & 0\\
        0 & -{\rm Id}_{\C^2}
    \end{pmatrix},\  
    \bm \alpha=(\alpha_1,\alpha_2,\alpha_3) \text{ with } \alpha_k=\begin{pmatrix}
        0 & \sigma_k\\
        \sigma_k & 0
    \end{pmatrix},
\end{equation}
for $k=1,2,3$, and 
\begin{equation}
    \sigma_1=\begin{pmatrix}
        0 & 1\\
        1 & 0
    \end{pmatrix},\quad
    \sigma_2=\begin{pmatrix}
        0 & -i\\
        i & 0
    \end{pmatrix},\quad
    \sigma_3=\begin{pmatrix}
        1 & 0\\
        0 & -1
    \end{pmatrix},
\end{equation}
the Pauli matrices. In what follows, we will denote by $D:=-i \bm \alpha \cdot \nabla +\beta m$ the Dirac operator (see~\cite{thaller} for a thorough introduction). 
Furthermore, 
\begin{equation}\label{def.rhos}
  \rho_s(\Psi)=\langle\beta\Psi,\Psi\rangle_{\C^4}
\end{equation}
is the scalar density, while $J=(\rho_v,\bm J)$ is the space-time current, with
\begin{equation}\label{def.j}
  \rho_v(\Psi)=\langle\Psi,\Psi\rangle_{\C^4}, \,\bm J(\Psi)=(J_1,J_2,J_3) \text{ with } \ J_k=\langle\Psi,\alpha_k\Psi\rangle_{\C^4}.
\end{equation}

In physics, these equations arise, for example, in the relativistic mean-field model of nuclear physics~\cite{Ring-96}. There, $\Psi$ is the wave-function of a nucleon, $S$ is a scalar field associated to the $\sigma$ meson, and $\omega$ is a vector field associated to the $\omega$ meson.
In this context, both the masses $m_\sigma>0, m_\omega>0$ of the fields and the associated coupling constants $g_\sigma, g_\omega$ are large. For this reason, the equations for $S$ and $\omega$ are usually replaced by 
\begin{equation}\label{eq:instant}
 \begin{aligned}
  S&=-\gamma_\sigma \rho_s(\Psi), \\
  \omega&=\gamma_\omega J(\Psi),
 \end{aligned}
\end{equation}
with $\gamma_\sigma=\tfrac{g_\sigma^2}{m_\sigma^2}$ and  $\gamma_\omega=\frac{g_\omega^2}{m_\omega^2}$, 
in which the fields are determined instantaneously by their $\Psi$-dependent sources.
This gives rise to the non-linear Dirac equation for $\Psi$
\begin{equation}\label{eq.dnl1}
 i\partial_t\Psi 
= \bm \alpha\cdot\left(
  -i\nabla-\gamma_\omega {\bm J}(\Psi)
  \right)\Psi
  +\beta\left(m - \gamma_\sigma \rho_s(\Psi) \right)\Psi
  +\gamma_\omega\rho_v(\Psi)\Psi
  \,.
\end{equation}
Our main result is that equation~\eqref{eq.dnl1} provides a good approximation to the behavior of $\Psi$, solving the Dirac equation in the system~\eqref{eqRMFTgeneral}, in the simultaneous strong-coupling and large-mass limit $m_\sigma, m_\omega, g_\sigma, g_\omega \to \infty$ with fixed ratios $\gamma_\sigma, \gamma_\omega$.

\begin{theorem}\label{thm.main1}
Let $s>\tfrac52$ and $\Psi_\mathrm{in}\in H^s(\R^3, \C^4)$, $(S_\mathrm{in},\dot{S}_\mathrm{in})\in H^s(\R^3, \R)\times H^{s-1}(\R^3, \R)$, $(\omega_\mathrm{in},\dot{\omega}_\mathrm{in})\in H^s(\R^3, \R^4)\times H^{s-1}(\R^3, \R^4)$.
Let
\begin{equation*}
 \Psi_\mathrm{nl}\in\mathcal{C}((-T_{\mathrm{min}}^{\mathrm{nl}}, T_{\mathrm{max}}^{\mathrm{nl}}),H^s(\mathbb{R}^3, \C^4))
\end{equation*}
be the maximal solution to~\eqref{eq.dnl1} with initial condition $\Psi_\mathrm{nl}\vert_{t=0}=\Psi_\mathrm{in}$. 
Let $\gamma_\sigma, \gamma_\omega \geq 0$, $m_\sigma, m_\omega>0$ and let
\[
	(\Psi,S,\omega)\in \mathcal{C}((-T_{\mathrm{min}}, T_{\mathrm{max}}),H^s(\mathbb{R}^3, \C^4)\times H^s(\mathbb{R}^3,\R)\times H^s(\mathbb{R}^3, \R^4) )\,,
\] 
be the maximal solution to~\eqref{eqRMFTgeneral} with $g_\sigma=m_\sigma\sqrt{\gamma_\sigma}$, $g_\omega=m_\omega\sqrt{\gamma_\omega}$   and initial conditions
\begin{equation*}
  \Psi_{|t=0}=\Psi_\mathrm{in},\ (S,\partial_t S)_{|t=0}= (S_\mathrm{in},\dot{S}_\mathrm{in}), \ (\omega,\partial_t \omega)_{|t=0}= (\omega_\mathrm{in},\dot{\omega}_\mathrm{in}).
\end{equation*}
Then, for all fixed $\gamma_\sigma, \gamma_\omega \geq 0$, we have
\[
\liminf_{m_\sigma, m_\omega\to \infty}T_{\rm min/max}\geq T_{\rm min/max}^{\mathrm{nl}}\,.
\]
and, for all $0<T_1<T_{\mathrm{min}}^{\mathrm{nl}}$, $0<T_2<T_{\mathrm{max}}^{\mathrm{nl}}$, and all  $0\leq s'<s$,
\[
\lim_{m_\sigma, m_\omega\to \infty}\|\Psi-\Psi_{\mathrm{nl}}\|_{\mathcal{C}([-T_1, T_2],H^{s'}(\mathbb{R}^3, \C^4))}=0\,.
\]
\end{theorem}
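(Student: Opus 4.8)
The plan is to control $W:=\Psi-\Psi_{\mathrm{nl}}$, which vanishes at $t=0$, by a Gronwall estimate, the point being that the Klein--Gordon fields $S,\omega$ do not themselves converge to their instantaneous values $-\gamma_\sigma\rho_s(\Psi)$, $\gamma_\omega J(\Psi)$ of~\eqref{eq:instant}, but merely oscillate about them at frequency $\gtrsim m_\sigma,m_\omega$; it is only after these oscillations are averaged out, by an integration by parts in time, that their net influence on $\Psi$ disappears. Introduce the field errors $e_\sigma:=S+\gamma_\sigma\rho_s(\Psi)$ and $(e_V,\bm e_\omega):=\omega-\gamma_\omega J(\Psi)$. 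Because $g_\sigma=m_\sigma\sqrt{\gamma_\sigma}$, subtracting $\gamma_\sigma$ times the Klein--Gordon equation satisfied by $\rho_s(\Psi)$ from the one for $S$ cancels the $m_\sigma^2$ term, giving
\[
 (\partial_t^2-\Delta+m_\sigma^2)\,e_\sigma=\gamma_\sigma(\partial_t^2-\Delta)\rho_s(\Psi),
\]
and likewise $(\partial_t^2-\Delta+m_\omega^2)(\omega-\gamma_\omega J(\Psi))=-\gamma_\omega(\partial_t^2-\Delta)J(\Psi)$; the right-hand sides carry no large parameter. Subtracting~\eqref{eq.dnl1} from the Dirac equation in~\eqref{eqRMFTgeneral} and expressing the difference of potentials through $e_\sigma,e_V,\bm e_\omega$ and the cubic, locally Lipschitz in $W$, differences $\rho_s(\Psi)\Psi-\rho_s(\Psi_{\mathrm{nl}})\Psi_{\mathrm{nl}}$, $\rho_v(\Psi)\Psi-\rho_v(\Psi_{\mathrm{nl}})\Psi_{\mathrm{nl}}$, $\bm J(\Psi)\cdot\bm\alpha\,\Psi-\bm J(\Psi_{\mathrm{nl}})\cdot\bm\alpha\,\Psi_{\mathrm{nl}}$, one gets
\[
 i\partial_t W=DW+\mathcal B W+\bigl(\beta e_\sigma+e_V-\bm\alpha\cdot\bm e_\omega\bigr)\Psi_{\mathrm{nl}},
\]
where $\mathcal B$ acts on $W$ with coefficients that will be bounded in $L^\infty$ uniformly in $m_\sigma,m_\omega$.

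The first step is the uniform a~priori control and the lower bound on the lifespans. Here one exploits that, although $g_\sigma^2=\gamma_\sigma m_\sigma^2$ is large, the operator $g_\sigma^2(-\Delta+m_\sigma^2)^{-1}=\gamma_\sigma(1-\Delta/m_\sigma^2)^{-1}$ is bounded by $\gamma_\sigma$ on every $H^\sigma$ and converges strongly to $\gamma_\sigma$ as $m_\sigma\to\infty$. Using the Duhamel representation of $S,\omega$ and integrating by parts in time — which trades the bounded oscillatory factor $\cos(t\sqrt{-\Delta+m_\sigma^2})$ for $(-\Delta+m_\sigma^2)^{-1/2}\sin(\cdots)$, a gain of a factor $m_\sigma^{-1}$ — one checks that $S,\omega$ stay bounded in the relevant Sobolev norms; their time derivatives, which are genuinely of size $O(m_\sigma),O(m_\omega)$, nonetheless always occur multiplied by regular spinor bilinears, so a further integration by parts renders their contributions bounded. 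Feeding this into the $H^s$ energy estimate for the Dirac equation, a bootstrap shows that for $0<T_1<T_{\mathrm{min}}^{\mathrm{nl}}$, $0<T_2<T_{\mathrm{max}}^{\mathrm{nl}}$ and $m_\sigma,m_\omega$ large the solution of~\eqref{eqRMFTgeneral} exists on $[-T_1,T_2]$ with $\|\Psi\|_{\mathcal C([-T_1,T_2],H^s)}$, and the fields in their energy spaces, bounded uniformly in $m_\sigma,m_\omega$; by the continuation criterion this gives $\liminf_{m_\sigma,m_\omega\to\infty}T_{\mathrm{min/max}}\ge T_{\mathrm{min/max}}^{\mathrm{nl}}$. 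I expect this step to be the main obstacle: strong coupling together with the $O(m_\sigma),O(m_\omega)$ size of $\partial_tS,\partial_t\omega$ means the estimates close only because of the precise algebraic structure of the coupling — the cancellation of $m_\sigma^2$ above and the fact that field time-derivatives are always paired with smooth densities — so a carefully organised (repeated) integration by parts in time, or equivalently a suitably modified energy functional, is required.

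Given these bounds, the second step writes, from the Duhamel formula for the Klein--Gordon equation and one integration by parts in $t$,
\[
 e_\sigma(t)=\partial_t\mathcal E_\sigma(t)+r_\sigma(t),\qquad\mathcal E_\sigma(0)=0,
\]
where $\mathcal E_\sigma$ is bounded in $H^s$ on $[-T_1,T_2]$ with $\|\mathcal E_\sigma\|_{\mathcal C([-T_1,T_2],H^{s-1})}=O(m_\sigma^{-1})$, and $r_\sigma=\gamma_\sigma\bigl(1-(1-\Delta/m_\sigma^2)^{-1}\bigr)\rho_s(\Psi)$ satisfies $\|r_\sigma\|_{\mathcal C([-T_1,T_2],H^{s'})}\to0$ for every $s'<s$, because the Fourier multiplier $1-(1+|\xi|^2/m_\sigma^2)^{-1}$ is bounded by $1$ and tends to $0$ pointwise (this is exactly where $s'<s$ enters). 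The analogous decomposition holds for $\omega-\gamma_\omega J(\Psi)$ with $m_\omega$ in place of $m_\sigma$.

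Finally, writing $W$ by Duhamel, $W(t)=-i\int_0^te^{-i(t-\tau)D}\bigl(\mathcal B(\tau)W(\tau)+(\beta e_\sigma+e_V-\bm\alpha\cdot\bm e_\omega)\Psi_{\mathrm{nl}}\bigr)\d{\tau}$, the term $\mathcal B W$ is absorbed by Gronwall thanks to the uniform bounds. In the error term, the $r_\sigma$-part (and its $\omega$-analogue) is $o(1)$ in $L^2$ by Step~2; for the $\partial_t\mathcal E_\sigma$-part one integrates by parts in $\tau$, moving $\partial_\tau$ onto $e^{-i(t-\tau)D}\Psi_{\mathrm{nl}}(\tau)$. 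This produces a factor $D$ — harmless, a single derivative, since $\mathcal E_\sigma\Psi_{\mathrm{nl}}$ is $O(m_\sigma^{-1})$ in $H^{s-1}\hookrightarrow H^1$ — and a factor $\partial_\tau\Psi_{\mathrm{nl}}$, which is bounded in $L^2$ because $\Psi_{\mathrm{nl}}$ solves~\eqref{eq.dnl1}; the boundary terms are $O(m_\sigma^{-1})$ since $\mathcal E_\sigma(0)=0$. Hence $\sup_{t\in[-T_1,T_2]}\bigl\|\int_0^te^{-i(t-\tau)D}(\beta e_\sigma+e_V-\bm\alpha\cdot\bm e_\omega)\Psi_{\mathrm{nl}}\,\d{\tau}\bigr\|_{L^2}\to0$, and Gronwall yields $\|W\|_{\mathcal C([-T_1,T_2],L^2)}\to0$. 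Interpolating with the uniform bound of $\Psi$ in $H^s$ from Step~1 (and $\|\Psi_{\mathrm{nl}}\|_{\mathcal C([-T_1,T_2],H^s)}<\infty$) gives $\|\Psi-\Psi_{\mathrm{nl}}\|_{\mathcal C([-T_1,T_2],H^{s'})}\to0$ for every $0\le s'<s$.
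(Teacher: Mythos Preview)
Your overall architecture coincides with the paper's: introduce the reduced fields $e_\sigma=\overline S$, $e_\omega=\overline\omega$, split them into a small residual plus a highly oscillatory part, use integration by parts in the Dirac Duhamel integral to show the oscillations average out, Gronwall, and then interpolate. Step~2 and Step~3 are essentially the paper's Lemma~\ref{lem:Phi-error} and Proposition~\ref{prop.conv} (the paper multiplies the error fields by $\Psi$ rather than $\Psi_{\mathrm{nl}}$ in the forcing term, but as you implicitly note the difference is a further $\mathcal B W$ term and is harmless once Step~1 is in hand).

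The gap is in Step~1. You write the reduced Klein--Gordon equation $(\partial_t^2-\Delta+m_\sigma^2)e_\sigma=\gamma_\sigma(\partial_t^2-\Delta)\rho_s(\Psi)$, but then bound the fields by ``Duhamel for $S,\omega$ plus integration by parts'', asserting that repeated time--IBP renders everything bounded. This does not close at the claimed $H^s$ regularity: after one integration by parts the Duhamel integral contains $\cos((t-t')\sqrt{-\Delta+m_\sigma^2})\,\partial_{t'}\rho_s(\Psi)$, whose $H^s$ norm needs $\partial_t\rho_s\in H^s$ and hence $\Psi\in H^{s+1}$; a second integration by parts produces $\partial_t^2\rho_s$, which indeed contains $\partial_t\bm\omega$ multiplied by a spinor bilinear, but moving $\partial_t$ off $\bm\omega$ destroys the factor $(-\Delta+m_\sigma^2)^{-1/2}$ you had gained. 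The loss is a \emph{spatial} derivative, and your remark about large time derivatives paired with densities does not address it.

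What the paper does (Lemma~\ref{lem.red-eq}) is to observe that in the reduced source $(\partial_t^2-\Delta)\rho_s(\Psi)$ the second--order terms cancel exactly because $D^2=-\Delta+m^2$: one has $\partial_t^2\rho_s=-2\Re\langle\beta\Psi,D^2\Psi\rangle+\text{(first order)}+\partial_tQ$ and $\Delta\rho_s=2\Re\langle\beta\Psi,\Delta\Psi\rangle+\text{(first order)}$, so $(\partial_t^2-\Delta)\rho_s=P+\partial_tQ$ with $P$ involving only \emph{one} derivative of $(\Psi,\overline S,\overline\omega)$ and $Q$ none. The $\partial_tQ$ piece is integrated by parts in Duhamel, so the $H^s$ estimate for $\overline S$ needs only $P\in H^{s-1}$ and $Q\in H^s$, both of which follow from $\Psi,\overline S,\overline\omega\in H^s$. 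This algebraic cancellation---not merely the ``cancellation of $m_\sigma^2$''---is what makes the uniform bound close, and it is the essential missing idea in your sketch. Once you supply it (and organise the bootstrap as in the paper: assume a $W^{1,\infty}$ bound, deduce the uniform $H^s$ bound, then use convergence in $H^{s'}$ with $\tfrac52<s'<s$ to close the $W^{1,\infty}$ assumption), the rest of your argument goes through.
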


Remarkably, this result is independent of the initial conditions for $S$ and $\omega$ (that is, the convergence holds without requiring that \eqref{eq:instant} holds at the initial time). This is highly desirable from the point of view of physics, since otherwise the approximation might only hold for special initial conditions, and it is not immediately clear why a physical system should be in such an initial state. The assumption that the initial conditions for $\Psi$ in equations~\eqref{eqRMFTgeneral} and~\eqref{eq.dnl1} are exactly the same can easily be relaxed to include data whose difference tends to zero as $m_\sigma, m_\omega \to \infty$. We could also consider more general coupling constants $g_\sigma, g_\omega$ such that $g_\sigma^2/m_\sigma^2\to \gamma_\sigma$ and $g_\omega^2/m_\omega^2\to\gamma_\omega$.
These generalizations are essentially trivial, so we will not pursue them, in favor of a simpler presentation.

There are sufficient conditions guaranteeing that $T_{\mathrm{min}}^{\mathrm{nl}}, T_{\mathrm{max}}^{\mathrm{nl}}=\infty$ holds. For $\gamma_\omega=0$ and initial data that are small in $H^1$, this was proved in~\cite{bejenaru2015}, and under a different smallness condition in~\cite{candy2018}. For initial data that are small in $H^s$, $s>1$, this was proved in \cite{machihara2003} (the authors only treat the case $\gamma_\omega=0$, but their proof can be straightforwardly extended to the case $\gamma_\omega>0$). Together with the above theorem, $T_{\mathrm{min}}^{\mathrm{nl}}, T_{\mathrm{max}}^{\mathrm{nl}}=\infty$ implies that solutions to the Dirac-Klein-Gordon system exist for an arbitrarily long time provided that the masses $m_\sigma$ and $m_\omega$ are large enough (without requiring that $(S_\mathrm{in},\omega_\mathrm{in},\dot{S}_\mathrm{in},\dot{\omega}_\mathrm{in})$ is small). This is particularly interesting because global existence results for the Dirac-Klein-Gordon system are only known for small initial data $(\Psi_\mathrm{in},S_\mathrm{in},\omega_\mathrm{in},\dot{S}_\mathrm{in},\dot{\omega}_\mathrm{in})$ \cite{bejenaru2017}.

Note that the maximal solutions for both the system ~\eqref{eqRMFTgeneral} and the equation~\eqref{eq.dnl1} exist and are unique, see for instance Proposition~\ref{prop:exist} below. One can lower the regularity for local well-posedness \cite{machihara2003,bejenaru2015,bejenaru2017}, and it is a challenging problem to extend our convergence result accordingly to smaller values of $s$. The proof of Theorem~\ref{thm.main1} is given in Section~\ref{sect:one-body}, and actually provides a quantitative rate of convergence (see Remark \ref{rk:rate}). 

The reduction of the system~\eqref{eqRMFTgeneral} to the non-linear Dirac equation~\eqref{eq.dnl1} via~\eqref{eq:instant} amounts to dividing the equations for $S, \omega$ by $m_\sigma^2, m_\omega^2$ and neglecting the terms
\begin{equation}
 \frac{1}{m_\sigma^2}(\partial^2_t-\Delta)S\,,\frac{1}{m_\omega^2}(\partial^2_t-\Delta)\omega.
\end{equation}
Since these terms involve derivatives of $S, \omega$ it is not clear that they are really small, as the solutions can, and indeed will, oscillate wildly for large $m_\sigma, m_\omega$.
However, these fast oscillations do allow the fields to quickly adapt to any changes in the source term, while their effect on $\Psi$ essentially ``averages out''. So, while in general~\eqref{eq:instant} does not  hold even approximately, $\Psi$ is still well described by~\eqref{eq.dnl1} in the limit. Similar singular limits have been studied in the literature~\cite{schochet1986, added1988, daub2016,griesemer2017,baumstark2020}, and we will comment in Section~\ref{sec:heuristics} below on the particularity of our situation compared to some of these works.

\subsection{The many-body problem} 

The analog of Equation~\eqref{eqRMFTgeneral} in mean-field (Dirac-Fock) theory is
\begin{equation}\label{eq:dkg-mb}
 \begin{cases}
  i\partial_t\Gamma = [D+{ \beta}S +  V - {\bm \alpha}\cdot {\bm \omega},\Gamma],\\
  (\partial^2_t-\Delta+m_\sigma^2)S=-g_\sigma^2\rho_s(\Gamma)\\
        (\partial^2_t-\Delta+m_\omega^2)\omega=g_\omega^2 J(\Gamma),
 \end{cases}
\end{equation}
where $[\cdot, \cdot ]$ denotes the commutator, $S, V, {\bm \omega}$ are as above and the densities corresponding to the Hilbert-Schmidt operator $\Gamma$ with integral kernel $\Gamma(x,y)\in \C^{4\times 4}$ are formally given by
\begin{align}
 \rho_s(\Gamma)(x)&= \Tr_{\C^4}(\beta \Gamma(x,x)), \label{eq:rho-s-def} \\
 \rho_v(\Gamma)(x)&= \Tr_{\C^4}(\Gamma(x,x)), \label{eq:rho-v-def} \\
 J_k(\Gamma)(x)&=\Tr_{\C^4}(\alpha_k\Gamma(x,x)). \label{eq:rho-J-def}
\end{align}
Note that these expressions reduce to the ones of~\eqref{def.rhos},~\eqref{def.j} if $\Gamma$ is the rank-one orthogonal projection on $\C\Psi$. The corresponding non-linear Dirac equation for $\Gamma$ takes the form
\begin{equation}\label{eq:dnl-mb}
 i\partial_t\Gamma
= \left[D - { \beta}\gamma_\sigma \rho_s(\Gamma) +  \gamma_\omega \rho_v(\Gamma) -  \gamma_\omega {\bm \alpha}\cdot {\bm J}(\Gamma) , \Gamma \right].
\end{equation}

Let $\mathfrak{S}^p$, $p\geq 1$ denote the Schatten classes~\cite{simontrace}, in particular $\mathfrak{S}^2$ denotes the class of Hilbert-Schmidt operators.
We take $\Gamma$ as an element of the following space
\begin{equation}\label{eq:H^s-op}
 \mathfrak{H}^s:=\{\Gamma \in \mathfrak{S}^2(\R^3, \C^4): (1-\Delta)^{s/2}\Gamma(1-\Delta)^{s/2} \in \mathfrak{S}^2(\R^3, \C^4)\},
\end{equation}
endowed with the norm
\begin{equation}\label{eq:norm-H^s-op}
\|\Gamma\|_{\gH^s} := \| (1-\Delta)^{s/2}\Gamma(1-\Delta)^{s/2} \|_{\gS^2}, 
\end{equation}
where $s\geq 0$ is chosen large enough for the densities $\rho_s, J$ to make sense. For details on the functional setting and on the precise meaning of Equations \eqref{eq:dkg-mb} and \eqref{eq:dnl-mb}, we refer to Section \ref{sect:many-body}.

We then have the following generalization of Theorem~\ref{thm.main1}:
\begin{theorem}\label{thm.main-mb}
Let $s>\tfrac52$ and $\Gamma_\mathrm{in}\in \mathfrak{H}^s$ be a non-negative operator, $(S_\mathrm{in},\dot{S}_\mathrm{in})\in H^s(\R^3, \R)\times H^{s-1}(\R^3, \R)$, $(\omega_\mathrm{in},\dot{\omega}_\mathrm{in})\in H^s(\R^3, \R^4)\times H^{s-1}(\R^3, \R^4)$.
Let
\begin{equation*}
 \Gamma_\mathrm{nl}\in\mathcal{C}((-T_{\mathrm{min}}^{\mathrm{nl}}, T_{\mathrm{max}}^{\mathrm{nl}}),\mathfrak{H}^s))
\end{equation*}
be the maximal solution to~\eqref{eq:dnl-mb} with initial condition $\Gamma_\mathrm{nl}\vert_{t=0}=\Gamma_\mathrm{in}$. 
Let $\gamma_\sigma, \gamma_\omega \geq 0$, $m_\sigma, m_\omega>0$ and let
\[
	(\Gamma,S,\omega)\in \mathcal{C}((-T_{\mathrm{min}}, T_{\mathrm{max}}),\mathfrak{H}^s\times H^s(\mathbb{R}^3,\R)\times H^s(\mathbb{R}^3, \R^4) )\,,
\] 
be the maximal solution to~\eqref{eq:dkg-mb} with $g_\sigma=m_\sigma\sqrt{\gamma_\sigma}$, $g_\omega=m_\omega\sqrt{\gamma_\omega}$   and initial conditions
\begin{equation*}
  \Gamma_{|t=0}=\Gamma_\mathrm{in},\ (S,\partial_t S)_{|t=0}= (S_\mathrm{in},\dot{S}_\mathrm{in}), \ (\omega,\partial_t \omega)_{|t=0}= (\omega_\mathrm{in},\dot{\omega}_\mathrm{in}).
\end{equation*}
Then, for all fixed $\gamma_\sigma, \gamma_\omega \geq 0$, we have
\[
\liminf_{m_\sigma, m_\omega\to \infty}T_{\rm min/max}\geq T_{\rm min/max}^{\mathrm{nl}}\,.
\]
and, for all $0<T_1<T_{\mathrm{min}}^{\mathrm{nl}}$, $0<T_2<T_{\mathrm{max}}^{\mathrm{nl}}$, and all  $0\leq s'<s$,
\[
\lim_{m_\sigma, m_\omega\to \infty}\|\Gamma-\Gamma_{\mathrm{nl}}\|_{\mathcal{C}([-T_1, T_2],\mathfrak{H}^{s'})}=0\,.
\]
\end{theorem}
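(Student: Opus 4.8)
The strategy is to reduce the many-body problem to the one-body analysis of Theorem~\ref{thm.main1} by exploiting the fact that the field equations for $S$ and $\omega$ are driven only by the scalar density $\rho_s(\Gamma)$ and the current $J(\Gamma)$, which are scalar/vector functions on $\R^3$, exactly as in the one-body case. First I would establish the functional framework: the map $\Gamma \mapsto (\rho_s(\Gamma), J(\Gamma))$ sends $\gH^s$ continuously into $H^{s}(\R^3,\R)\times H^s(\R^3,\R^4)$ for $s>\tfrac32$ (with the relevant trace-class/Hilbert-Schmidt estimates, e.g.\ $\|\rho_s(\Gamma)\|_{H^s}\lesssim \|\Gamma\|_{\gH^s}$, proved via a Kato--Seiler--Simon type bound), and conversely that the multiplication operators $\beta S$, $V$, $-\bm\alpha\cdot\bm\omega$ with $S,V,\bm\omega\in H^s$ act boundedly on $\gH^s$ with norm controlled by the $H^s$ norms of the potentials. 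This gives local well-posedness of both \eqref{eq:dkg-mb} and \eqref{eq:dnl-mb} in $\gH^s$ by a fixed-point argument analogous to Proposition~\ref{prop:exist}, and shows that the maximal existence times are characterized by blow-up of $\|\Gamma(t)\|_{\gH^s}$ (together with the field norms). Propagation of non-negativity for \eqref{eq:dnl-mb} follows from conjugation-invariance of the flow: $\Gamma_\mathrm{nl}(t)=U(t)\Gamma_\mathrm{in}U(t)^*$ for a unitary propagator $U(t)$ generated by the (self-adjoint, time-dependent) mean-field Hamiltonian $D-\beta\gamma_\sigma\rho_s(\Gamma_\mathrm{nl})+\gamma_\omega\rho_v(\Gamma_\mathrm{nl})-\gamma_\omega\bm\alpha\cdot\bm J(\Gamma_\mathrm{nl})$.

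The core of the argument is the same singular-limit mechanism as in the one-body case, run at the level of operators. I would write $S = S_\mathrm{inst} + \delta S$, $\omega = \omega_\mathrm{inst}+\delta\omega$ with $S_\mathrm{inst}=-\gamma_\sigma\rho_s(\Gamma)$, $\omega_\mathrm{inst}=\gamma_\omega J(\Gamma)$, so that $\delta S$ solves $(\partial_t^2-\Delta+m_\sigma^2)\delta S = -(\partial_t^2-\Delta)S_\mathrm{inst}$ with an analogous equation for $\delta\omega$; the key dispersive/averaging estimate, already available from the proof of Theorem~\ref{thm.main1}, shows that on any compact time interval $\|\delta S\|_{C([-T_1,T_2],H^{s'})}\to 0$ and $\|\delta\omega\|_{C([-T_1,T_2],H^{s'})}\to 0$ as $m_\sigma,m_\omega\to\infty$, for $s'<s$, provided uniform-in-mass $H^s$ bounds on $\rho_s(\Gamma)$, $J(\Gamma)$ hold on that interval. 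Then $\Gamma$ solves $i\partial_t\Gamma = [H_\mathrm{nl}(\Gamma),\Gamma] + [\beta\,\delta S - \bm\alpha\cdot\delta\omega + \gamma_\omega\rho_v(\Gamma)\,(\text{correction}),\Gamma]$ where $H_\mathrm{nl}$ is the nonlinear Dirac operator of \eqref{eq:dnl-mb}; writing the Duhamel formula for $\Gamma-\Gamma_\mathrm{nl}$ against the $\gH^{s'}$-flow, using that commutators $[\,\cdot\,,\Gamma]$ are Lipschitz on $\gH^{s'}$ in the potential (controlled by $\|\delta S\|_{H^{s'}}\|\Gamma\|_{\gH^{s'}}$ etc.), and applying Grönwall gives $\|\Gamma-\Gamma_\mathrm{nl}\|_{C([-T_1,T_2],\gH^{s'})}\to 0$. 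The lower bound on $T_\mathrm{min/max}$ then comes from a continuity/bootstrap argument: on any interval where $\Gamma_\mathrm{nl}$ exists, the difference is small in $\gH^{s'}$, and one upgrades this to a uniform $\gH^s$ bound on $\Gamma$ (hence no blow-up) by the usual trick of estimating $\|\Gamma(t)\|_{\gH^s}$ via Duhamel with the now-controlled potentials, closing a bootstrap in $m_\sigma,m_\omega$ large.

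The main obstacle is obtaining the \emph{uniform-in-mass} $H^s$ bounds on the source terms $\rho_s(\Gamma)$ and $J(\Gamma)$ on a fixed time interval, since a priori the $\gH^s$ norm of $\Gamma$ could grow as $m_\sigma,m_\omega\to\infty$ because the multiplication by $\beta S - \bm\alpha\cdot\bm\omega$ feeding back into the commutator involves $S,\omega$ that are only bounded in $H^s$ but oscillate. This is resolved exactly as in the one-body case: one does not control $S,\omega$ in $H^s$ uniformly, but rather works with the ``good variables'' — essentially $\Gamma$ together with $\delta S,\delta\omega$ (or an appropriately modified energy that subtracts the instantaneous part) — for which the energy estimate closes uniformly, the dangerous terms being either of lower order or exact time-derivatives that can be integrated by parts in the Duhamel/energy identity. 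Making this operator-valued energy estimate precise, in particular verifying that the commutator structure does not destroy the cancellations that worked for a single spinor (it does not, because $\rho_s$, $J$ and the commutator are all compatible with the conjugation action), is the technical heart of the proof; everything else is a faithful transcription of the one-body argument with $H^s(\R^3,\C^4)$ replaced by $\gH^s$ and inner products replaced by Hilbert--Schmidt traces.
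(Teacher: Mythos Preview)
Your overall architecture matches the paper's: reduced variables $\delta S=\bar S$, $\delta\omega=\bar\omega$, uniform-in-mass estimates via a $P,Q$ splitting of the source $(\partial_t^2-\Delta)\rho_s(\Gamma)$ with integration by parts, and a bootstrap to push the existence time past any $T<T_{\mathrm{max}}^{\mathrm{nl}}$. But there is a genuine gap in your convergence step. You assert that, given uniform $H^s$ bounds on the sources, $\|\delta S\|_{\mathcal{C}([-T_1,T_2],H^{s'})}\to 0$ (and likewise for $\delta\omega$); this is false for generic initial data. The Duhamel formula for $\bar S$ contains the free-wave piece $\cos\big(t\sqrt{-\Delta+m_\sigma^2}\big)\bar S_{\mathrm{in}}$ and an integrated term $\int_0^t\cos\big((t-t')\sqrt{-\Delta+m_\sigma^2}\big)Q_\sigma\,dt'$, neither of which is small in any $H^{s'}$ as $m_\sigma\to\infty$; they are $O(1)$ and merely oscillate rapidly. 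So you cannot conclude $\Gamma\to\Gamma_{\mathrm{nl}}$ by controlling the perturbation potential $\delta S,\delta\omega$ pointwise in time, and the Gr\"onwall argument you sketch for $\Gamma-\Gamma_{\mathrm{nl}}$ does not close.

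What the paper does (Lemma~\ref{lem:Phi-error} and the $I_4$-term in Propositions~\ref{prop.conv} and~\ref{prop.conv-mb}) is to split $\bar S=\tilde S+(\bar S-\tilde S)$, where $\tilde S$ is the genuinely small remainder (of size $m_\sigma^{s'-s}$) and $\bar S-\tilde S$ collects the two oscillatory pieces above. The oscillatory part is then fed into the Duhamel formula for $\Gamma$ and handled by integrating by parts \emph{in the $t'$-variable of the $\Gamma$-integral}, using $\cos\big(t'\sqrt{-\Delta+m_\sigma^2}\big)=\partial_{t'}\frac{\sin(t'\sqrt{-\Delta+m_\sigma^2})}{\sqrt{-\Delta+m_\sigma^2}}$ and the equation for $\partial_{t'}\Gamma$; this produces terms that gain a factor $(-\Delta+m_\sigma^2)^{-1/2}$ and hence $m_\sigma^{-1}$ in $\mathfrak{H}^{s-1}$, with the $\mathfrak{H}^{s'}$ bound following by interpolation. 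Your last paragraph invokes integration by parts only for the uniform-bound step (the $\partial_t Q$ term in the $\bar S$ equation), not for this averaging in the $\Gamma$ equation; these are two distinct IBP manoeuvres and the second one is the heart of why the result holds for \emph{arbitrary} field initial data. In the many-body case this step also requires rearranging a double commutator via the Jacobi identity (see the proof of Proposition~\ref{prop.conv-mb}), so it deserves to be written out rather than folded into ``faithful transcription''.
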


We will prove this Theorem in Section~\ref{sect:many-body}. As in the one-body case, our proof provides a quantitative rate of convergence (see Remark \ref{rk:rate-mb}). In particular, our proof includes a blow-up criterion for Hartree-type equations (Lemma \ref{lem:blowup-crit-mb}) that we have not encountered in the literature before, which relies on a Kato-Ponce type inequality for density matrices (Lemma \ref{lem:K-P-mb}).

We remark that the case of $\Gamma$ with finite rank equal to $N$ corresponds to a coupled system of $N$ Dirac equations, i.e. $N$-particles in Hartree-Fock approximation. This case could also be treated by a straightforward generalization of Theorem~\ref{thm.main1}. However, all of the relevant estimates will then depend on $N$. Theorem~\ref{thm.main-mb} is a generalization giving uniform control in $N$ and even allowing for $\Gamma$ of infinite rank  as long as $\Gamma \in \mathfrak{S}^2$. The next step would be to consider perturbations of the Dirac sea, e.g. modelled by the negative spectral projection of $D$, by Hilbert-Schmidt operators and discuss the corresponding renormalized equations, as in Bogoliubov-Dirac-Fock theory~\cite{chaix1989, hainzl2005, hainzl2007,GraHaiLewSer-13}.   

\subsection{Heuristics}\label{sec:heuristics}

Let us discuss heuristically why a convergence result like Theorem~\ref{thm.main1} is expected to hold, and what difficulties may arise. 
It is instructive to consider the integral equations associated to~\eqref{eqRMFTgeneral}. With only a scalar field (i.e. taking $\gamma_\omega = 0$ and $(\omega_\mathrm{in},\dot\omega_\mathrm{in})=0$) we have
\begin{align}
S(t)&=\cos (t\sqrt{-\Delta+m_\sigma^2})S_\mathrm{in}+\frac{\sin (t\sqrt{-\Delta+m_\sigma^2})}{\sqrt{-\Delta+m_\sigma^2}}\dot{S}_\mathrm{in}\nonumber\\
&\qquad -g_\sigma^2\int_0^t\frac{\sin( (t-s)\sqrt{-\Delta+m_\sigma^2})}{\sqrt{-\Delta+m_\sigma^2}}\rho_s(\Psi(s))\d s, 
\end{align}
which, integrating by parts, can be rewritten as
\begin{align*}
  S(t)&=-\gamma_\sigma(1-\Delta/m_\sigma^2)^{-1}\rho_s(\Psi(t))+\cos (t\sqrt{-\Delta+m_\sigma^2})\big(S_\mathrm{in}+\gamma_\sigma(1-\Delta/m_\sigma^2)^{-1}\rho_s(\Psi_\mathrm{in})\big)\\
  &\qquad +\frac{\sin (t\sqrt{-\Delta+m_\sigma^2})}{\sqrt{-\Delta+m_\sigma^2}}\dot{S}_\mathrm{in}
  +\gamma_\sigma\int_0^t\frac{\cos ((t-s)\sqrt{-\Delta+m_\sigma^2})}{1-\Delta/m_\sigma^2}\partial_s\rho_s(\Psi(s))\d s.
\end{align*}
Assume for the moment that~\eqref{eq:instant} holds for the initial data (even though this is not assumed in Theorem~\ref{thm.main1}). Then, since $(1-\Delta/m_\sigma^2)^{-1}\to 1$ strongly as $m_\sigma\to \infty$, $S(t)$ would indeed be approximately given by~\eqref{eq:instant} for all $t$ if the final integral is small when $m_\sigma$ tends to $+\infty$. It is actually enough to show that the integral $\int_0^t\cos ((t-s)\sqrt{-\Delta+m_\sigma^2})\partial_s\rho_s(\Psi(s))\d s$ is small. One way to approach this is to integrate by parts once more, to obtain
\begin{equation}
 \begin{aligned}
\int_0^t\cos ((t-s)\sqrt{-\Delta+m_\sigma^2})\partial_s\rho_s(\Psi(s))\d s
&=\frac{\sin(t\sqrt{-\Delta+m_\sigma^2})}{\sqrt{-\Delta+m_\sigma^2}}\partial_t\rho_s(\Psi)\vert_{t=0} \\
&\qquad +\int_0^t\frac{\sin ((t-s)\sqrt{-\Delta+m_\sigma^2})}{\sqrt{-\Delta+m_\sigma^2}}\partial_s^2\rho_s(\Psi(s))\d s,
 \end{aligned}
\end{equation}
which is small as long as $\partial_t^2\rho_s(\Psi)$ can be appropriately bounded.
Using the equation for $\Psi$ (and the fact that $S$ is real), one calculates that
\begin{equation}
 \partial_t \rho_s(\Psi)= 2\re\langle \beta\Psi,\partial_t\Psi\rangle_{\C^4}
 =2\im\langle \beta\Psi,D\Psi\rangle_{\C^4},
\end{equation}
and
\begin{equation}\label{eq:Dt^2rho}
\partial_t^2 \rho_s(\Psi)=2\re\left[ \langle\beta D\Psi,D\Psi\rangle_{\C^4}-\langle\beta\Psi,D^2\Psi\rangle_{\C^4}-2\langle\Psi,S(i \bm \alpha\cdot\nabla \Psi\rangle_{\C^4})\right].
\end{equation}
Roughly, an $H^s$-bound on the error in $S$ requires a bound on $\partial_t^2 \rho_s(\Psi)$ in $H^{s-1}$ and thus a bound on $\Psi\in H^{s+1}$. However, controlling the effect of  $S$ in
\begin{equation}\label{eq:PsiDuhamel}
 \Psi(t)=e^{-itD}\Psi_\mathrm{in}+\int_0^te^{-i(t-s)D}{ \beta}S(s)\Psi(s)\d s 
\end{equation}
with respect to the $H^{s+1}$-norm would require an $H^{s+1}$-bound on $S$.
Hence, the chain of estimates does not close due to a loss of derivatives. In the works \cite{daub2016,griesemer2017,baumstark2020}, it turns out that this loss of derivatives does not occur due to an additional regularizing effect, and in these cases one can use such an argument.

This loss of derivatives can be dealt with by considering the differences, or reduced variables,
\begin{equation}\label{eq:var-red}
 \overline{S}:=S+\gamma_\sigma \rho_s(\Psi), \qquad \overline{\omega}:=\omega-\gamma_\omega J(\Psi)=:(\overline{V}, \overline{\bm \omega}).
\end{equation}
The equation for $\overline{S}$ is then, using the equation for $S$, 
\begin{equation}\label{eq:kg-reduced}
  (\partial_t^2-\Delta+m_\sigma^2)\bar{S}=(\partial_t^2-\Delta+m_\sigma^2)S+\gamma_\sigma(\partial_t^2-\Delta)\rho_s(\Psi)+g_\sigma^2 \rho_s(\Psi)=\gamma_\sigma(\partial_t^2-\Delta)\rho_s(\Psi).
\end{equation}
Now
\begin{equation}
 \Delta \rho_s(\Psi) = 2\re \big(\langle \beta \Psi, \Delta \Psi \rangle_{\C^4} +  \langle \beta \nabla\Psi, \nabla \Psi \rangle_{\R^3\times \C^4}\big),
\end{equation}
and combining this with~\eqref{eq:Dt^2rho} we see that the second derivatives of $\Psi$ cancel on the right hand side, as $D^2=-\Delta+m^2$.
Passing to the equation~\eqref{eq:kg-reduced} thus eliminates the (apparent) loss of one derivative. Additionally, it has the advantage that the right side is now of size one so that one may hope to show that if $\bar{S}$ is small at the initial time, it remains small for times of order one.

If we do not assume that $\overline{S}\vert_{t=0}$ is small, then $S(t)$ contains oscillating terms such as
\begin{equation}
 \cos (t\sqrt{-\Delta+m_\sigma^2})\big(S_\mathrm{in}+\gamma_\sigma(1-\Delta/m_\sigma^2)^{-1}\rho_s(\Psi_\mathrm{in})\big)
\end{equation}
that are not, in general, small. However, these can be treated by integrating by parts in the equation for $\Psi$~\eqref{eq:PsiDuhamel}, and we still obtain convergence of $\Psi$, without convergence of $S$.

\section{The one-body case}\label{sect:one-body}

\subsection{Well-posedness and uniform estimates}\label{sect:exist}

First recall the integral formulations on a time interval $I$ containing $0$ of the various equations we are working on. Let $s>\tfrac32$. For the Dirac-Klein-Gordon equation, $(\Psi,S,\omega)\in\cC(I,H^s)$ is a solution to \eqref{eqRMFTgeneral} with the same initial conditions as in Theorem \ref{thm.main1} if and only if for all $t\in I$,
 \begin{equation}
  \begin{cases}\label{eq:dkg-int}
\dps\Psi(t) = e^{-itD}\Psi_\mathrm{in} -i\int_0^te^{-i(t-t')D}[(-\bm \alpha \cdot  \bm \omega +\beta S+V)\Psi](t')
\d t'\,,
\\
\\
 \dps S(t)= 
\cos\left(t\sqrt{-\Delta + m^2_\sigma}\right) S_\mathrm{in}
+ \frac{\sin\left(t\sqrt{-\Delta + m^2_\sigma}\right)}{\sqrt{-\Delta + m^2_\sigma}}\dot{S}_\mathrm{in} 
\\\dps\qquad\qquad\qquad 
-g_\sigma^2\int_0^t\frac{\sin\left((t-t')\sqrt{-\Delta + m^2_\sigma}\right)}{\sqrt{-\Delta + m^2_\sigma}}\rho_s(\Psi(t'))\d t',\\
\\
 \dps\omega(t)= 
\cos\left(t\sqrt{-\Delta + m^2_\omega}\right)\omega_\mathrm{in}
+ \frac{\sin\left(t\sqrt{-\Delta + m^2_\omega}\right)}{\sqrt{-\Delta + m^2_\omega}}\dot{\omega}_\mathrm{in}
\\\dps\qquad\qquad\qquad 
+g_\omega^2\int_0^t\frac{\sin\left((t-t')\sqrt{-\Delta + m^2_\omega}\right)}{\sqrt{-\Delta + m^2_\omega}}J(\Psi(t')) \d t'.\\
\end{cases}
 \end{equation}
For the nonlinear Dirac equation, $\Psi\in\cC(I,H^s)$ is a solution to \eqref{eq.dnl1} with initial condition $\Psi\vert_{t=0}=\Psi_{\mathrm{in}}$ if and only if for all $t\in I$,
\begin{equation}\label{eq:dnl-int}
 \Psi(t) = e^{-itD}\Psi_\mathrm{in} -i\int_0^te^{-i(t-t')D}[(-\gamma_\omega\bm \alpha\cdot {\bm J}(\Psi)
  - \gamma_\sigma\beta\rho_s(\Psi)+\gamma_\omega\rho_v(\Psi))\Psi](t')\,\d t'.
\end{equation}

We begin by stating a simple result on existence of solutions to our equations.

\begin{proposition}\label{prop:exist}
 Let $s>\tfrac32$ and $(\Psi_\mathrm{in}, S_\mathrm{in}, \dot S_\mathrm{in}, \omega_\mathrm{in}, \dot \omega_\mathrm{in})$ be as in Theorem~\ref{thm.main1}.
 \begin{enumerate}[label=(\roman*)]
  \item\label{pt1.prop} There exist $T_{\mathrm{min}}^{\mathrm{nl}}, T_{\mathrm{max}}^{\mathrm{nl}}\in(0,+\infty]$ and a unique maximal solution 
\[
\Psi_{\mathrm{nl}}\in\mathcal{C}((-T_{\mathrm{min}}^{\mathrm{nl}}, T_{\mathrm{max}}^{\mathrm{nl}}),H^s(\mathbb{R}^3, \C^4))
\]
 to the Cauchy problem \eqref{eq.dnl1} with initial condition $\Psi_{\mathrm{nl}}\vert_{t=0}=\Psi_{\mathrm{in}}$. Furthermore, if $T_{\rm max/min}^{\mathrm{nl}}<+\infty$ then 
\[
\limsup_{t\to T_{\rm max/min}^{\mathrm{nl}}}\|\Psi_{\mathrm{nl}}(t)\|_{L^\infty}= +\infty\,.
\] 
\item\label{pt2.prop}  
For all $m_\sigma, m_\omega, g_\sigma, g_\omega$ there exist $T_{\mathrm{min}}, T_{\mathrm{max}}\in(0,+\infty]$ and a unique maximal solution 
\[
	(\Psi,S,\omega)\in \mathcal{C}((-T_{\mathrm{min}}, T_{\mathrm{max}}),H^s(\mathbb{R}^3, \C^4)\times H^s(\mathbb{R}^3,\R)\times H^s(\mathbb{R}^3, \R^4) )
\] 
to the Cauchy problem \eqref{eqRMFTgeneral} with the same initial conditions as in Theorem \ref{thm.main1}. Furthermore, if $T_{\rm max/min}<+\infty$ then 
\begin{equation}
	\limsup_{t\to T_{\rm max/min}}\|(\Psi,S,\omega)(t)\|_{L^\infty} 
	= +\infty\,. \label{eq:blowup-crit}
\end{equation}
 \end{enumerate}
\end{proposition}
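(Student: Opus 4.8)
The plan is to treat all three Cauchy problems by one and the same contraction–mapping argument, carried out on the Duhamel formulations \eqref{eq:dnl-int} and \eqref{eq:dkg-int}. This gives local existence, uniqueness and a unique maximal solution, together with a blow-up alternative phrased in the $H^s$-norm; the final, slightly more delicate, step is to upgrade this $H^s$-criterion to the $L^\infty$-criterion \eqref{eq:blowup-crit} (and its analogue for $\Psi_{\mathrm{nl}}$), using commutator estimates and a Grönwall argument.

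For the fixed point I would fix $s>\tfrac32$, so that $H^s(\R^3)$ is a Banach algebra, and use that $e^{-itD}$ is unitary on every $H^\sigma$, that $\cos(t\sqrt{-\Delta+m^2})$ is bounded on $H^\sigma$ uniformly in $t$, and that $\tfrac{\sin(t\sqrt{-\Delta+m^2})}{\sqrt{-\Delta+m^2}}$ maps $H^{\sigma-1}$ to $H^\sigma$ with a $t$-uniform bound (depending on $m$). For \eqref{eq.dnl1}, the right-hand side of \eqref{eq:dnl-int} is then, for $T$ small enough, a contraction of the ball of radius $2\|\Psi_{\mathrm{in}}\|_{H^s}$ in $\cC([-T,T],H^s)$: the nonlinearity $(-\gamma_\omega\bm\alpha\cdot\bm J(\Psi)-\gamma_\sigma\beta\rho_s(\Psi)+\gamma_\omega\rho_v(\Psi))\Psi$ is trilinear and contains no derivatives, so the algebra property gives the bound $\lesssim\|\Psi\|_{H^s}^3$ and a matching Lipschitz estimate, with $T$ depending only on $\|\Psi_{\mathrm{in}}\|_{H^s}$. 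For \eqref{eqRMFTgeneral} one runs the same argument on $\cC([-T,T],H^s\times H^s\times H^s)$: all couplings ($\beta S\Psi$, $\bm\alpha\cdot\bm\omega\,\Psi$, $V\Psi$ in the Dirac equation, and $\rho_s(\Psi)$, $J(\Psi)$ in the Klein–Gordon equations) are bilinear, hence locally Lipschitz on $H^s$, and combined with the propagator bounds above one gets a contraction for $T$ small depending on the $H^s$-norms of the data and on $m_\sigma,m_\omega,g_\sigma,g_\omega$. Uniqueness on a common interval follows from the same Lipschitz estimates and Grönwall applied to the difference of two solutions; gluing local solutions yields a unique maximal solution on some $(-T_{\min},T_{\max})$. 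Since the local time of existence depends only on the $H^s$-norm at the starting time, the standard continuation argument shows that $T_{\max}<\infty$ (resp. $T_{\min}<\infty$) forces $\limsup_{t\to T_{\max}}\|\cdot(t)\|_{H^s}=\infty$ (resp. as $t\to-T_{\min}$), with $\|\cdot\|_{H^s}$ meaning $\|\Psi\|_{H^s}$ in case \ref{pt1.prop} and $\|\Psi\|_{H^s}+\|S\|_{H^s}+\|\omega\|_{H^s}$ in case \ref{pt2.prop}.

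To upgrade this to \eqref{eq:blowup-crit} I would use a Moser/Kato–Ponce product estimate $\|fg\|_{H^s}\lesssim\|f\|_{L^\infty}\|g\|_{H^s}+\|g\|_{L^\infty}\|f\|_{H^s}$ (valid for $s\ge0$), which gives $\|\rho_s(\Psi)\|_{H^s}+\|J(\Psi)\|_{H^s}\lesssim\|\Psi\|_{L^\infty}\|\Psi\|_{H^s}$ and, iterating once, $\|N(\Psi)\|_{H^s}\lesssim\|\Psi\|_{L^\infty}^2\|\Psi\|_{H^s}$ for the cubic nonlinearity $N$ of \eqref{eq.dnl1}. Inserting this into \eqref{eq:dnl-int} and using unitarity of $e^{-itD}$ gives $\|\Psi(t)\|_{H^s}\le\|\Psi_{\mathrm{in}}\|_{H^s}+C\int_0^{|t|}\|\Psi(\tau)\|_{L^\infty}^2\|\Psi(\tau)\|_{H^s}\d\tau$, whence by Grönwall $\|\Psi(t)\|_{H^s}$ remains bounded on any interval on which $\|\Psi\|_{L^\infty}$ is bounded; with the $H^s$-alternative this proves \ref{pt1.prop}. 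For \eqref{eqRMFTgeneral}, writing $\ell(t):=\|\Psi(t)\|_{L^\infty}+\|S(t)\|_{L^\infty}+\|\omega(t)\|_{L^\infty}$, the same product estimate applied to \eqref{eq:dkg-int} — using that $\tfrac{\sin((t-t')\sqrt{-\Delta+m_\sigma^2})}{\sqrt{-\Delta+m_\sigma^2}}$ gains one derivative, so that $\|S(t)\|_{H^s}$ is controlled by $\int\|\rho_s(\Psi)\|_{H^{s-1}}\le\int\|\rho_s(\Psi)\|_{H^s}\lesssim\int\|\Psi\|_{L^\infty}\|\Psi\|_{H^s}$, and likewise for $\omega$ — yields coupled integral inequalities of the form $\|\Psi(t)\|_{H^s}\le\|\Psi_{\mathrm{in}}\|_{H^s}+C\int_0^{|t|}\ell(\tau)\big(\|\Psi\|_{H^s}+\|S\|_{H^s}+\|\omega\|_{H^s}\big)(\tau)\d\tau$ and $\|S(t)\|_{H^s}+\|\omega(t)\|_{H^s}\le C_0+C\int_0^{|t|}\ell(\tau)\|\Psi(\tau)\|_{H^s}\d\tau$, where $C_0$ depends on the data and $C$ on the fixed parameters. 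Adding these and applying Grönwall to $t\mapsto\|\Psi(t)\|_{H^s}+\|S(t)\|_{H^s}+\|\omega(t)\|_{H^s}$ shows this sum stays bounded wherever $\ell$ is bounded, contradicting the $H^s$-alternative; this establishes \eqref{eq:blowup-crit} and completes \ref{pt2.prop}.

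I expect the only real obstacle to be the bookkeeping in this last step: one must make sure the product estimates peel off exactly one $H^s$-factor, so that the Grönwall integrand is \emph{linear} in the $H^s$-norms with coefficient controlled by $\ell$, and for the coupled system one must genuinely exploit the one-derivative gain of the Klein–Gordon propagators over their $\Psi$-bilinear sources so that the chain of estimates closes in spite of the large coefficients $g_\sigma^2,g_\omega^2$ (harmless here since they are fixed). Working directly with the integral identities \eqref{eq:dnl-int} and \eqref{eq:dkg-int}, rather than differentiating $\|\Psi(t)\|_{H^s}^2$ in time, avoids the usual regularization/density technicality and keeps the argument short.
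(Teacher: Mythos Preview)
Your proposal is correct and matches the paper's approach exactly: the paper only sketches the argument, invoking a fixed-point argument on the integral formulations \eqref{eq:dkg-int}, \eqref{eq:dnl-int} using the algebra property of $H^s$ for $s>\tfrac32$, and then upgrading the $H^s$ blow-up alternative to the $L^\infty$ criterion via the Kato--Ponce inequality \eqref{eq:K-P}, which is precisely what you spell out in more detail.
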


This result can be proved by standard methods for semilinear equations~\cite{cazenave2003}, using a fixed point argument on the integral formulations \eqref{eq:dkg-int} and \eqref{eq:dnl-int} together with the fact that for $s>\tfrac32$, $H^s$ is an algebra. The $L^\ii$ blow-up criterion is obtained via the Kato-Ponce inequality \cite[Lemma X.4]{KatPon-88}: for $u,v\in H^{s}$ with $s>\tfrac32$, we have
\begin{equation}\label{eq:K-P}
\|uv\|_{H^s}\leq c_{\eqref{eq:K-P}}(\|u\|_{L^\infty}\|v\|_{H^s} + \|v\|_{L^\infty}\|u\|_{H^s})\,. 
\end{equation}
A proof of existence in $H^2$ can be found in~\cite{najman1992}. Existence results with weaker regularity assumptions are also available~\cite{escobedo1997,machihara2003, machihara2005, bejenaru2015, bejenaru2017, candy2018}.

To prove the convergence result (Theorem \ref{thm.main1}), we will need bounds that are uniform with respect to the asymptotic parameters $m_\sigma$ and $m_\omega$. This will require additional regularity and this explains why we assume that $s>\tfrac52$ in Theorem \ref{thm.main1} while the existence result Proposition \ref{prop:exist} only assumes $s>\tfrac32$.

As explained above, the key to this uniformity is to work with the reduced variables $\overline{S}=S+\gamma_\sigma(\Psi)$, $\overline{\omega}=\omega - \gamma_\omega J(\Psi)$. If $I$ is an open interval containing $0$ and if $(\Psi,S,\omega)\in\cC(I,H^s(\R^3,\C^4\times\R\times\R^4))$ is a solution to \eqref{eqRMFTgeneral}, then $(\Psi,\bar{S},\bar{\omega})$ also belongs to $\cC(I,H^s(\R^3,\C^4\times\R\times\R^4))$ and is a solution to the equation
\begin{align}\label{eqRMFTgeneralchanged} 
 \left\{\begin{aligned}
        &i\partial_t \Psi=D \Psi + W(\Psi,\overline{S}, \overline{\omega})  \Psi \\
        &(\partial_t^2-\Delta + m_\sigma^2)\overline{S}
	=  \gamma_\sigma(\partial_t^2-\Delta)\rho_{s}(\Psi)\\
        &(\partial_t^2-\Delta + m_\omega^2)\overline{\omega}
	=  -\gamma_\omega(\partial_t^2-\Delta)J(\Psi)
    \end{aligned}
    \right.
\end{align}
with
\begin{align}\label{eq:W-def}
W(\Psi,\overline{S}, \overline{\omega})&= \bm \alpha \cdot \left( - \overline{\bm \omega}-\gamma_\omega{\bm J}(\Psi)\right) +\beta \left(\overline{S}-\gamma_\sigma\rho_s(\Psi)\right) +\left(\overline{V}+ \gamma_\omega\rho_v(\Psi)\right),
\end{align}
and the initial conditions
\begin{equation}\label{eq:init-red}
 \begin{aligned}
&\overline{S}_\mathrm{in}=S_\mathrm{in} + \gamma_\sigma\rho_{s}(\Psi_\mathrm{in})\,,
\qquad\dot{\overline{S}}_\mathrm{in}=\dot{S}_\mathrm{in} + \gamma_\sigma\dot\rho_{s}\,,
\\
&\overline{\omega}_\mathrm{in}=\omega_\mathrm{in}-  \gamma_\omega J(\Psi_\mathrm{in})\,,
\qquad \dot{\overline{\omega}}_\mathrm{in}=\dot{\omega}_\mathrm{in}-  \gamma_\omega \dot{J}\,,
 \end{aligned}
\end{equation}
where 
\begin{equation}
 \begin{aligned}
 \dot\rho_{s}
&=2\Re\langle \beta i\Psi_\mathrm{in},\bm \alpha \cdot (-i\nabla - \bm \omega_\mathrm{in})\Psi_\mathrm{in}\rangle_{\C^4} \\
\dot{J}&=(\dot \rho_v, \dot {\bm J})=(\dot \rho_v, \dot {J_1},\dot {J}_2, \dot {J_3}) \\
\dot \rho_v &=- 2\re \langle \Psi_\mathrm{in}, {\bm \alpha}\cdot \nabla \Psi_\mathrm{in}\rangle_{\C^4} \\
 \dot {J}_k&= 2\Re\langle\alpha_k i\Psi_\mathrm{in},\bm \alpha \cdot (-i\nabla - \bm \omega_\mathrm{in})\Psi_\mathrm{in} +\beta (m+S_\mathrm{in})\Psi_\mathrm{in}\rangle_{\C^4}, \qquad k=1,2,3,
 \end{aligned}
\end{equation}
which, for a solution $\Psi$, corresponds to the derivatives of $\rho_s(\Psi)$ and $J(\Psi)$ at $t=0$.

\begin{lemma}\label{lem.red-eq}
Let $s>\tfrac52$ and $\gamma_\sigma, \gamma_\omega\geq 0$.
\begin{enumerate}[label=(\roman*)]
 \item There are functions $P=(P_\sigma,P_\omega)$, and $Q=(Q_\sigma, Q_\omega)$ (which are independent of $m_\sigma$, $m_\omega$) such that, for all $m_\sigma, m_\omega>0$,
$(\Psi,\overline{S},\overline{\omega})$ is a solution to~\eqref{eqRMFTgeneralchanged} if and only if it solves the equation
\begin{align}\label{eqRMFT-PQ}
 \left\{\begin{aligned}
        &i\partial_t \Psi=D \Psi + W(\Psi, \overline{S}, \overline{\omega}) \Psi \\
        &(\partial_t^2-\Delta + m_\sigma^2)\overline{S}
	=  P_\sigma\circ (\Psi, \overline{S}, \overline{\omega}, \nabla\Psi, \nabla \overline{S}, \nabla \overline{\omega}) + \partial_t \big(Q_\sigma\circ (\Psi, \overline{S}, \overline{\omega})\big)\\
        &(\partial_t^2-\Delta + m_\omega^2)\overline{\omega}
	=  P_\omega\circ (\Psi, \overline{S}, \overline{\omega}, \nabla\Psi, \nabla \overline{S}, \nabla \overline{\omega}) + \partial_t \big(Q_\omega\circ(\Psi, \overline{S}, \overline{\omega})\big)\,.
    \end{aligned}
    \right.
\end{align}
%
%
\item  There exists a constant $c_{\eqref{eq:Q-bound}}>0$ such that for all $(\Psi,\overline{S}, \overline{\omega})\in H^s$ we have
\begin{equation}\label{eq:Q-bound}
\| Q\circ(\Psi,\overline{S}, \overline{\omega})\|_{H^s} \leq c_{\eqref{eq:Q-bound}}  \left(\|\Psi\|^4_{H^{s}} + \|\Psi\|^2_{H^{s}} \|(\overline{S}, \overline{\omega})\|_{H^s}\right)\,.
\end{equation}
\item For all $M>0$ there exists a constant $c_{\eqref{eq:PQ-bound}}(M)>0$ so that for all $(\Psi,\overline{S}, \overline{\omega})\in H^s$ with
\begin{equation*}
 \|(\Psi,\overline{S}, \overline{\omega})\|_{W^{1,\infty}} \leq M\,,
\end{equation*}
the inequality
\begin{equation}\label{eq:PQ-bound}
\begin{aligned}
 \| Q \circ(\Psi,\overline{S}, \overline{\omega})\|_{H^s} +\| P\circ(\Psi, \overline{S}, \overline{\omega}, \nabla\Psi, \nabla \overline{S}, \nabla \overline{\omega})\|_{H^{s-1}} 
 \leq c_{\eqref{eq:PQ-bound}}(M) \|(\Psi,\overline{S}, \overline{\omega})\|_{H^{s}}
\end{aligned}
\end{equation}
holds.
\end{enumerate}
\end{lemma}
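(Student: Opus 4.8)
The plan is to establish~(i) by an explicit, if somewhat lengthy, algebraic computation, after which~(ii) and~(iii) follow from the $H^s$-algebra property and the Kato--Ponce inequality~\eqref{eq:K-P}.

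\emph{Part (i).} I would write the Dirac equation in~\eqref{eqRMFTgeneralchanged} as $\partial_t\Psi=-i(D+W)\Psi$, where $W=W(\Psi,\overline{S},\overline{\omega})$ is the matrix~\eqref{eq:W-def}; the crucial structural fact is that $W$ carries no time derivative. Differentiating once more, $\partial_t^2\Psi=-(D+W)^2\Psi-i(\partial_t W)\Psi$, and since the Dirac matrices obey $\{\alpha_j,\alpha_k\}=2\delta_{jk}$ and $\{\alpha_j,\beta\}=0$ one has $D^2=-\Delta+m^2$, hence $(D+W)^2=-\Delta+m^2+DW+WD+W^2$. Using these identities, together with the Leibniz formula for $\Delta$ of a quadratic density recalled in Section~\ref{sec:heuristics}, I would compute $(\partial_t^2-\Delta)\rho_s(\Psi)$, $(\partial_t^2-\Delta)\rho_v(\Psi)$ and $(\partial_t^2-\Delta)J_k(\Psi)$ in closed form. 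The decisive cancellation, already visible in~\eqref{eq:Dt^2rho}, is that the contributions involving $\Delta\Psi$ that come from $\partial_t^2\Psi$ through $D^2$ cancel exactly against the $\Delta$ acting on the density, so that in the resulting expression the highest-order terms are products of two factors each carrying a single first-order derivative, e.g. $\langle\beta\nabla\Psi,\nabla\Psi\rangle$.

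The only surviving time derivatives are then those in the term $\langle(\partial_t W)\Psi,\cdots\Psi\rangle_{\C^4}$. I would split $\partial_t W$ into the field pieces $-\bm \alpha\cdot\partial_t\overline{\bm \omega}$, $\beta\,\partial_t\overline{S}$, $\partial_t\overline{V}$, which cannot be rewritten, and the density pieces $\partial_t\rho_s(\Psi)$, $\partial_t\rho_v(\Psi)$, $\partial_t J_k(\Psi)$, which by $\partial_t\Psi=-i(D+W)\Psi$ become polynomials in $(\Psi,\nabla\Psi,\overline{S},\overline{\omega})$. For every term of the form $(\partial_t f)\,q(\Psi)$ with $q$ quadratic, I would apply $(\partial_t f)q=\partial_t(fq)-f\,\partial_t q$ and substitute the Dirac equation once more into $\partial_t q$. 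Gathering all the resulting terms $\partial_t(\cdots)$ into $Q=(Q_\sigma,Q_\omega)$ and everything else into $P=(P_\sigma,P_\omega)$ yields~\eqref{eqRMFT-PQ}; the equivalence with~\eqref{eqRMFTgeneralchanged} is immediate since the Dirac equation is untouched and only the Klein--Gordon right-hand sides have been rewritten. Because the substitutions used only the Dirac equation, which does not involve $m_\sigma,m_\omega$, the polynomials $P,Q$ depend only on the fixed parameters $m,\gamma_\sigma,\gamma_\omega$ and not on $m_\sigma,m_\omega$. Inspecting the monomials produced: $Q$ consists of monomials that are either quartic in $\Psi$ (arising from the Leibniz rule applied to $\partial_t\rho_s(\Psi),\partial_t J_k(\Psi)$ times a density) or quadratic in $\Psi$ times linear in $(\overline{S},\overline{\omega})$ (arising from the pieces $\partial_t\overline{S},\partial_t\overline{\omega},\partial_t\overline{V}$); $P$ is a polynomial in $(\Psi,\overline{S},\overline{\omega},\nabla\Psi,\nabla\overline{S},\nabla\overline{\omega})$ in which every monomial contains at most two differentiated factors, each bearing exactly one derivative.

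\emph{Parts (ii) and (iii).} Since $s>\tfrac52>\tfrac32$, $H^s$ is an algebra and $\|\nabla f\|_{H^{s-1}}\le\|f\|_{H^s}$; moreover the Kato--Ponce inequality~\eqref{eq:K-P} holds in $H^{s}$ and, as $s-1>\tfrac32$, also in $H^{s-1}$. For~\eqref{eq:Q-bound}, applying the algebra estimate to each monomial of $Q$ and using its structure gives $\|Q\circ(\Psi,\overline{S},\overline{\omega})\|_{H^s}\le c(\|\Psi\|_{H^s}^4+\|\Psi\|_{H^s}^2\|(\overline{S},\overline{\omega})\|_{H^s})$. For~\eqref{eq:PQ-bound}: in each monomial of $Q$, I would estimate all factors but one in $L^\infty$ (each bounded by $M$) and the remaining one in $H^s$; in each monomial of $P$, I would estimate one (possibly differentiated) factor in $H^{s-1}$ (bounded by $\|(\Psi,\overline{S},\overline{\omega})\|_{H^s}$ by the above) and the others in $L^\infty$ (bounded by $M$, using $\|\nabla f\|_{L^\infty}\le\|f\|_{W^{1,\infty}}\le M$); summing over the finitely many monomials gives the claim.

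\emph{Main obstacle.} Everything except part~(i) is routine. In part~(i) the conceptual content is the single identity $D^2=-\Delta+m^2$, but the actual work, and the place where care is required, is the bookkeeping: carrying out the cancellation and the time integration by parts so that $P$ contains no time derivative and each of its monomials carries at most one derivative per factor — which is precisely what makes the $H^{s-1}$ estimate close and explains the hypothesis $s>\tfrac52$.
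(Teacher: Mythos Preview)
Your argument is correct and yields the same $P,Q$ as the paper, but the paper organises the computation more economically. Instead of computing $\partial_t^2\Psi$ and then having to tame the resulting $\partial_t W$ term via the Leibniz identity $(\partial_t f)q=\partial_t(fq)-f\,\partial_t q$, the paper reads off $Q$ already from the \emph{first} time-derivative: writing $\partial_t F(\Psi,\Psi)=2\Im F(\Psi,(D+W)\Psi)$ and setting $Q_\bullet:=2\Im F(\Psi,W\Psi)$, one has $\partial_t^2 F=\partial_t\big(2\Im F(\Psi,D\Psi)\big)+\partial_t Q_\bullet$, and only the $D$-part needs to be differentiated once more and combined with $\Delta F$ via $D^2=-\Delta+m^2$. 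This sidesteps $\partial_t^2\Psi$ and $\partial_t W$ entirely, so the ``bookkeeping'' you flag as the main obstacle largely disappears; unwinding your Leibniz step one checks that your $Q$ coincides with the paper's $2\Im F(\Psi,W\Psi)$ and your $P$ with the paper's remainder. Parts~(ii) and~(iii) are handled identically in both, via the algebra property of $H^s$ and Kato--Ponce in $H^{s-1}$.
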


\begin{proof}
For $\Psi\in\mathbb{C}^4$, we denote by $F(\Psi, \Psi)$, one of the sesquilinear forms defined by
\[
\gamma_\sigma\langle \Psi, \beta\Psi\rangle_{\C^4}\,,\quad -\gamma_\omega|\Psi|^2\,,\quad -\gamma_\omega\langle\Psi, \alpha_1\Psi\rangle_{\C^4}\,,\quad -\gamma_\omega\langle\Psi, \alpha_2\Psi\rangle_{\C^4}\,,\quad -\gamma_\omega\langle\Psi, \alpha_3\Psi\rangle_{\C^4}\,.\] 
Assume now that $\Psi:I \times \R^3 \to \C^4 $ is a solution of 
\begin{equation}\label{eqPsiW}
 i\partial_t \Psi=D \Psi + W(\Psi,\overline{\omega},\overline{S}) \Psi
\end{equation}
with arbitrary functions $(\overline{S}, \overline{\omega})\in \mathcal{C}(I, H^{s}(\R^3,\R^5))$
. 
We then have
\[\begin{split}
\partial_t F(\Psi,\Psi) 
&= 2\Re F(\Psi,\partial_t\Psi)
= 2 \Im F(\Psi, (D + W) \Psi) \,.
\end{split}\]
We set 
\begin{equation}\label{eq:Q def}
 Q_\bullet(\Psi, \overline{S}, \overline{\omega}):=2\Im F(\Psi, W(\Psi,\overline{\omega},\overline{S})\Psi),
\end{equation}
where $\bullet$ corresponds to the choice of $F$. By the definition of $W$, $Q$ is a sum of quartic terms in $\Psi$, and terms that are linear in $\overline{S}, \overline{\omega}$ and quadratic in $\Psi$.
This already implies the bound~\eqref{eq:Q-bound} on $Q$, since $H^s$, $s>\tfrac32$, is a normed algebra.

We then find
\begin{align*}
 \partial_t^2 F(\Psi,\Psi) & =  2\partial_t  \Im F(\Psi, D  \Psi) + \partial_t Q_\bullet \\
 &= - 2 \Re F(\Psi,  D (D+W) \Psi) + 2 \Re F((D+W)\Psi, D  \Psi)
 + \partial_t Q_\bullet.
\end{align*}
With
\begin{equation*}
 \Delta F = 2\Re F(\Psi, \Delta \Psi) + \sum_{k=1}^3 2 \Re F(\partial_k \Psi, \partial_k \Psi)
\end{equation*}
and using that
$
D^2 = -\Delta+m^2
$
we get 
\begin{align*}
(\partial_t^2&\,-\Delta)F(\Psi,\Psi)
=\\
& \underbrace{2\Re F((D+W)\Psi,D\Psi)  -2\Re F(\Psi,(m^2+ D W)\Psi)
-\sum_{k=1}^32\Re F(\partial_k\Psi,\partial_k\Psi)}_{=:P_\bullet} + \partial_t Q_\bullet\,.
\end{align*}
 This defines $P_\sigma, P_\omega$ (with the respective choices of $F$), which, by definition of $W$, is a polynomial  in $\Psi, \overline{S}, \overline{\omega}, \nabla\Psi, \nabla \overline{S}, \nabla \overline{\omega}$ and their complex conjugates.
 
 In the other direction, assume that $(\Psi,\overline{S}, \overline{\omega})$ solves~\eqref{eqRMFT-PQ}. Then $\Psi$ is a solution of~\eqref{eqPsiW} and we can retrace our steps in the construction of $P,Q$, showing that $(\Psi,\overline{S}, \overline{\omega})$ is a solution to~\eqref{eqRMFTgeneralchanged}.
 
The bound~\eqref{eq:PQ-bound} on $P,Q$ follows from the fact that they are polynomials in their arguments (and the corresponding complex conjugates) and from the Kato-Ponce inequality \eqref{eq:K-P}.

\end{proof}

\begin{remark}
 In \eqref{eqRMFT-PQ}, we did not absorb the term $\partial_t Q$ into $P$ because it is not possible to control properly the time derivatives uniformly in the masses (indeed, the functions $\partial_t \bar{S}$ and $\partial_t \bar{\omega}$ oscillate quickly). The term $\partial_t Q$ will be dealt with using integration by parts. Furthermore, one sees from the previous lemma where the condition $s>\tfrac52$ comes from: since the nonlinear terms in $P$ involve first order derivatives of $(\Psi,\bar{S},\bar{\omega})$, estimating these terms in $H^{s-1}$ by the Kato-Ponce inequality \eqref{eq:K-P} requires that $s-1>\tfrac32$ and thus $s>\tfrac52$. 
\end{remark}

Let $(\Psi_\mathrm{in}, \overline{S}_\mathrm{in}, \dot{\overline{S}}_\mathrm{in}, \overline{\omega}_\mathrm{in}, \dot{\overline{\omega}}_\mathrm{in})$ be determined by the initial conditions as in~\eqref{eq:init-red}. To obtain bounds on $(\Psi,\bar{S},\bar{\omega})$ that are uniform in $m_\omega, m_\sigma$, we work on the integral formulation of equation~\eqref{eqRMFT-PQ}. For example, the equation for $\overline{S}$ is
\begin{align*}
 \overline{S}(t)&=\cos(t\sqrt{-\Delta + m_\sigma^2})\overline{S}_\mathrm{in}
+
\frac{\sin(t\sqrt{-\Delta + m_\sigma^2})}{\sqrt{-\Delta + m_\sigma^2}}\dot{\overline{S}}_\mathrm{in} \\
& \qquad+
\int_0^t
\frac{\sin((t-t')\sqrt{-\Delta + m_\sigma^2})}{\sqrt{-\Delta + m_\sigma^2}}(P_\sigma(t')+\partial_t Q_\sigma(t'))\d t',
\end{align*}
and an integration by parts in the term with $Q$ yields
\begin{align*}
& \int_0^t \frac{\sin((t-t')\sqrt{-\Delta + m_\sigma^2})}{\sqrt{-\Delta + m_\sigma^2}}\partial_t Q_\sigma(t')\d t' \\
&= - \frac{\sin(t\sqrt{-\Delta + m_\sigma^2})) }{\sqrt{-\Delta + m_\sigma^2}} Q_\sigma(\Psi_\mathrm{in}, \overline{\omega}_\mathrm{in}, \overline{S}_\mathrm{in})
+\int_0^t \cos((t-t')\sqrt{-\Delta + m_\sigma^2})Q_\sigma(\Psi, \overline{S}, \overline{\omega})(t')\d t'  \,.
\end{align*}

As we will see, the term with $P$ will benefit from the regularizing effect of the operator $(-\Delta+m_\sigma^2)^{-1/2}$, while  $Q$ does not involve derivatives of $(\Psi, \overline{S}, \overline{\omega})$. For $T_1,T_2>0$, the full set of integral equations for a solution $\chi:=(\Psi, \overline{S}, \overline{\omega})\in\cC([-T_1,T_2],H^s)$ to \eqref{eqRMFT-PQ} with initial conditions $(\Psi_\mathrm{in}, \overline{S}_\mathrm{in}, \dot{\overline{S}}_\mathrm{in}, \overline{\omega}_\mathrm{in}, \dot{\overline{\omega}}_\mathrm{in})$ is: for all $t\in[-T_1,T_2]$,

 \begin{equation}
  \begin{cases}\label{eq:Phidef}
&\dps\Psi(t) = e^{-itD}\Psi_\mathrm{in} -i\int_0^te^{-i(t-t')D}[W(\chi)\Psi](t')
\d t'\,,
\\
\\
&\dps W(\chi) =  
	\bm \alpha \cdot \left( - \overline{\bm \omega}-\gamma_\omega{\bm J}(\Psi)\right) +\beta \left(\overline{S}-\gamma_\sigma\rho_s(\Psi)\right) +\left(\overline{V}+ \gamma_\omega \rho_v(\Psi)
	\right)\,,
 \end{cases}
 \end{equation}
 and
 
 \begin{equation}
     \begin{cases}\tag{\ref{eq:Phidef}--continued}
& \dps\bar{S}(t)= 
\cos\left(t\sqrt{-\Delta + m^2_\sigma}\right)\overline S_\mathrm{in}
+ \frac{\sin\left(t\sqrt{-\Delta + m^2_\sigma}\right)}{\sqrt{-\Delta + m^2_\sigma}}\left( \dot{\overline{S}}_\mathrm{in} 
-Q_\sigma(\Psi_\mathrm{in}, \overline{\omega}_\mathrm{in}, \overline{S}_\mathrm{in})\right)
\\&\dps\qquad\qquad\qquad 
+\int_0^t\frac{\sin\left((t-t')\sqrt{-\Delta + m^2_\sigma}\right)}{\sqrt{-\Delta + m^2_\sigma}}P_\sigma(\chi, \nabla\chi)(t')\d t'\\
&\dps\qquad\qquad\qquad 
+\int_0^t\cos\left((t-t')\sqrt{-\Delta + m^2_\omega}\right) Q_\sigma(\chi)(t')\d t'
\,, \\
\\
& \dps\bar{\omega}(t)= 
\cos\left(t\sqrt{-\Delta + m^2_\omega}\right)\overline{\omega}_\mathrm{in}
+ \frac{\sin\left(t\sqrt{-\Delta + m^2_\omega}\right)}{\sqrt{-\Delta + m^2_\omega}}\left(\dot{\overline{\omega}}_\mathrm{in} - Q_\omega(\Psi_\mathrm{in}, \overline{\omega}_\mathrm{in}, \overline{S}_\mathrm{in})\right)
\\&\dps\qquad\qquad\qquad 
+\int_0^t\frac{\sin\left((t-t')\sqrt{-\Delta + m^2_\omega}\right)}{\sqrt{-\Delta + m^2_\omega}}P_\omega(\chi, \nabla\chi)(t')\d t'\\
&\dps\qquad\qquad\qquad 
+\int_0^t\cos\left((t-t')\sqrt{-\Delta + m^2_\omega}\right) Q_\omega(\chi)(t')\d t'
\,.
\end{cases}
 \end{equation}
 
 \begin{lemma}\label{lem:unif-exist}
Let $s>\tfrac52$ and $\gamma_\sigma, \gamma_\omega\geq 0$.
 For $(\Psi_\mathrm{in}, S_\mathrm{in}, \dot S_\mathrm{in}, \ \omega_\mathrm{in}, \dot \omega_\mathrm{in})$ as in Theorem~\ref{thm.main1} set 
 \begin{align*}
  R_0&:= \|\Psi_\mathrm{in}\|_{H^s}
+\|(\overline{\omega}_\mathrm{in}, \overline{S}_\mathrm{in} )\|_{H^s} 
+ \|(\dot{\overline{\omega}}_\mathrm{in}, \dot{\overline{S}}_\mathrm{in})\|_{H^{s-1}} 
+\|Q_\omega(\Psi_\mathrm{in}, \overline{\omega}_\mathrm{in}, \overline{S}_\mathrm{in} )\|_{H^{s-1}}\\ 
&\qquad+ \|Q_\sigma(\Psi_\mathrm{in}, \overline{\omega}_\mathrm{in}, \overline{S}_\mathrm{in} )\|_{H^{s-1}}\,,
 \end{align*}
 with the reduced variables given by~\eqref{eq:init-red}. For all $T_1,T_2>0$ and for all $M>0$, there exists $c_{\eqref{eq:pt2-unif}}(M)>0$ such that for all $m_\sigma,m_\omega \geq 1$ and for every solution $(\Psi, \overline{S},\overline{\omega})\in \mathcal{C}([-T_1, T_2],H^s)$ to~\eqref{eq:Phidef} 
which satisfies 
\begin{equation*}
 \|(\Psi, \overline{S},\overline{\omega})\|_{\mathcal{C}([-T_1,T_2],W^{1,\infty})} \leq M
\end{equation*}
the inequality
\begin{equation}\label{eq:pt2-unif}
 \|(\Psi(t), \overline{S}(t), \overline{\omega}(t))\|_{H^s} \leq R_0 e^{c_{\eqref{eq:pt2-unif}}(M) |t|}.
\end{equation}
holds for all $t\in [-T_1, T_2]$.
\end{lemma}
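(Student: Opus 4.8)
The plan is to run a Grönwall estimate directly on the integral system~\eqref{eq:Phidef}. The whole point of that system --- obtained by passing to the reduced variables~\eqref{eq:var-red} and then integrating by parts the $\partial_t Q$ terms --- is that it no longer exhibits the apparent loss of one derivative, so that a standard a~priori estimate closes with all constants independent of the masses $m_\sigma,m_\omega\geq 1$. Writing $\chi:=(\Psi,\overline S,\overline\omega)$ and $y(t):=\|\chi(t)\|_{H^s}$, with the product-space norm taken to be the sum of the component norms (any equivalent choice only changes constants), I would first record the two elementary Fourier-multiplier bounds that give the uniformity in the masses: since $D$ commutes with $-\Delta$, $e^{-itD}$ is an isometry of $H^s$, and for every $t\in\R$, $m\geq 1$ and $\sigma\in\R$,
\[
\bigl\|\cos(t\sqrt{-\Delta+m^2})\bigr\|_{\mathcal{L}(H^\sigma)}\leq 1,\qquad
\Bigl\|\tfrac{\sin(t\sqrt{-\Delta+m^2})}{\sqrt{-\Delta+m^2}}\Bigr\|_{\mathcal{L}(H^{\sigma-1},H^\sigma)}\leq 1,
\]
the second because $\langle\xi\rangle\,|\sin(t\sqrt{|\xi|^2+m^2})|\,(|\xi|^2+m^2)^{-1/2}\leq\langle\xi\rangle(|\xi|^2+1)^{-1/2}=1$ when $m\geq 1$. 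The gain of one derivative in the second estimate is precisely what lets the term $P$ --- which contains first derivatives of $\chi$ and hence only lies in $H^{s-1}$ by Lemma~\ref{lem.red-eq} --- contribute at the level of $H^s$; the term $Q$, after the integration by parts already present in~\eqref{eq:Phidef}, is propagated only by $\cos(\cdots)$ (no gain), but it carries no derivatives, so the bound $\|Q\circ(\cdot)\|_{H^s}\leq c_{\eqref{eq:PQ-bound}}(M)\|\cdot\|_{H^s}$ of Lemma~\ref{lem.red-eq}(iii) is enough.

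Next I would collect the nonlinear estimates, all under the standing hypothesis $\|\chi\|_{\mathcal{C}([-T_1,T_2],W^{1,\infty})}\leq M$. Lemma~\ref{lem.red-eq}(iii) directly gives, for each time $t'$,
\[
\|P(\chi,\nabla\chi)(t')\|_{H^{s-1}}+\|Q(\chi)(t')\|_{H^s}\leq c_{\eqref{eq:PQ-bound}}(M)\,y(t'),
\]
and, since $W(\chi)\Psi$ is a polynomial of degree at most three in $\chi$ with no derivatives (linear in $(\overline S,\overline\omega)$ and at most cubic in $\Psi$ through $\rho_s,\rho_v,\bm J$), iterating the Kato--Ponce inequality~\eqref{eq:K-P} together with $\|\chi\|_{L^\infty}\leq M$ gives a constant $c(M)$ with $\|W(\chi)(t')\Psi(t')\|_{H^s}\leq c(M)\,y(t')$ as well.

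Then I would plug these bounds, together with the propagator estimates, into the three lines of~\eqref{eq:Phidef}. The non-integral (``initial data'') contributions are controlled by $\|\Psi_\mathrm{in}\|_{H^s}$ for the $\Psi$-equation, by $\|\overline S_\mathrm{in}\|_{H^s}+\|\dot{\overline S}_\mathrm{in}\|_{H^{s-1}}+\|Q_\sigma(\Psi_\mathrm{in},\overline\omega_\mathrm{in},\overline S_\mathrm{in})\|_{H^{s-1}}$ for the $\overline S$-equation, and by the analogue for $\overline\omega$; with the sum norm on the product space, their total is exactly $R_0$. This yields $y(t)\leq R_0+c(M)\,\bigl|\int_0^t y(t')\,\mathrm{d}t'\bigr|$ on $[-T_1,T_2]$ with $c(M)$ independent of $m_\sigma,m_\omega$ (and of $T_1,T_2$), and Grönwall's lemma then gives $y(t)\leq R_0\,e^{c(M)|t|}$, i.e.~\eqref{eq:pt2-unif}.

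I do not expect any serious obstacle here; this lemma is essentially the payoff of the preparatory work in~\eqref{eq:var-red} and Lemma~\ref{lem.red-eq}, and it is a pure a~priori estimate (the solution and its $W^{1,\infty}$-bound being given), so no continuity/bootstrap argument is needed. The only points requiring care are the derivative bookkeeping in the first step --- the factor $(-\Delta+m^2)^{-1/2}$ must be spent to absorb the extra derivative in $P$, while $Q$ is treated at full regularity --- and checking that every constant produced by Kato--Ponce and by Lemma~\ref{lem.red-eq}(iii) genuinely depends only on $M$ (and the fixed data $s,\gamma_\sigma,\gamma_\omega$), not on the masses.
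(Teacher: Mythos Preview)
Your proposal is correct and follows essentially the same approach as the paper: both arguments estimate the integral system~\eqref{eq:Phidef} directly, using the uniform (in $m\geq1$) smoothing bound $\|(-\Delta+m^2)^{-1/2}\|_{H^{s-1}\to H^s}\leq 1$ to absorb the extra derivative in $P$, the $H^s$-bound of Lemma~\ref{lem.red-eq}(iii) for $Q$, Kato--Ponce for $W(\chi)\Psi$, and then close by Gr\"onwall. Your write-up is in fact slightly more explicit than the paper's (e.g.\ the symbol computation for the $\sin/\sqrt{\cdot}$ multiplier and the polynomial structure of $W\Psi$), but the logic is identical.
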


 \begin{proof}
 Let us remark first that for $\mu\geq 1$ and $s'\in[s-1,s]$,
\begin{equation}\label{eq.inegOp}
\bigg\|\frac{1}{\sqrt{-\Delta+\mu^2}}\bigg\|_{H^{s-1}\to H^{s'}} \leq \frac{1}{\mu^{s-s'}}\,.
\end{equation}
Let us again denote $\chi := (\Psi,\overline{\omega},\overline{S})\in \mathcal{C}([-T_1,T_2],{H}^s)$. By assumption, it satisfies
\[
\|\chi\|_{\mathcal{C}([-T_1,T_2],W^{1,\infty})}
\leq M\,.
\]
The $H^s$-norm of the terms in $\chi$ (solution of the integral equations~\eqref{eq:Phidef}) that depend only on the initial condition $\chi_\mathrm{in}:=(\Psi_\mathrm{in}, \overline{\omega}_\mathrm{in},\overline{S}_\mathrm{in})$ is bounded, due to inequality \eqref{eq.inegOp} with $s'=s$, by
\begin{align*}
 &\|\Psi_\mathrm{in}\|_{H^s}
+
\|(\overline{\omega}_\mathrm{in},\overline{S}_\mathrm{in}) \|_{H^s} 
+
 \|Q_\sigma(\chi_\mathrm{in})\|_{H^{s-1}} +  \|Q_\omega(\chi_\mathrm{in})\|_{H^{s-1}}
+
 \|(\dot{\overline{\omega}}_\mathrm{in},\dot{\overline{S}}_\mathrm{in})\|_{H^{s-1}}=R_0\,.
\end{align*}
By~\eqref{eq:PQ-bound} and \eqref{eq.inegOp} with $s'=s$ we then have
\begin{align*}
 \bigg\| \int_0^t\frac{\sin\left((t-t')\sqrt{-\Delta + m^2_\sigma}\right)}{\sqrt{-\Delta + m^2_\sigma}}P(\chi, \nabla\chi)(t')\d t'\bigg\|_{H^s}  
  &\leq 
  \int_0^{|t|}\left\| P(\chi, \nabla\chi)(t')\right\|_{H^{s-1}} \d t' \\
  &\leq c_{\eqref{eq:PQ-bound}}(M) \int_0^{|t|} \|\chi(t')\|_{H^s}\d t',
\end{align*}
and a similar bound for the term involving $Q$. By the Kato-Ponce inequality~\eqref{eq:K-P}, this gives a bound on the $H^s$- norm of the terms involving $\chi$ by
\[\begin{split}
&\int_0^{|t|}
\left(
	\|W(\chi)\Psi\|_{H^s}+ 4 c_{\eqref{eq:PQ-bound}} (M)\|\chi(t')\|_{H^s} \right)\d t'
\\&
\leq
 c_{\eqref{eq:pt2-unif}}(M)\int_0^{|t|}
\|\chi(t')\|_{{H}^s}\d t'\,,
\end{split}\]
which determines $c_{\eqref{eq:pt2-unif}}(M)>0$, and yields \eqref{eq:pt2-unif} by Gronwall's lemma.
\end{proof}
 
 \begin{remark}
   The key point of Lemma \ref{lem:unif-exist} is the uniformity in $m_\sigma$ and $m_\omega$ of the constant $c_{\eqref{eq:pt2-unif}}$, provided we have a uniform control of the solution in $W^{1,\ii}$.
 \end{remark}

\subsection{Convergence}
In this section we fix $\gamma_\sigma, \gamma_\omega\geq 0$ and $(\Psi_\mathrm{in}, S_\mathrm{in}, \dot S_\mathrm{in}, \ \omega_\mathrm{in}, \dot \omega_\mathrm{in})$. 
To prove the convergence of $\Psi$, a solution to~\eqref{eqRMFTgeneral}, to the solution $\Psi_\mathrm{nl}$ of the non-linear Dirac equation, we will first separate  $\overline{S}, \overline{\omega}$ into a ``small'' part and an ``oscillatory'' part.

\begin{lemma}\label{lem:Phi-error}
 Let $s>\tfrac52$ and $T_1,T_2>0$.  For all $s'\in[s-1,s]$ and $M>0$, there exists $c_{\eqref{eq:Phi-error}}(M)>0$ so that,
 for all $m_\sigma, m_\omega\geq 1$ and every solution $(\Psi, \overline{S}, \overline{\omega})\in\cC([-T_1,T_2],H^s)$ to~\eqref{eq:Phidef} which satisfies
 \begin{equation*}
  \|(\Psi, \overline{S}, \overline{\omega})\|_{\mathcal{C}([-T_1,T_2],H^s)}\leq M,
 \end{equation*}
we have for all $t\in [-T_1,T_2]$,
\begin{equation}\label{eq:Phi-error}
\begin{split}
&\left\|\overline{S}(t) - \cos\left(t\sqrt{-\Delta + m^2_\sigma}\right)\overline{S}_\mathrm{in}  
- \int_0^t\cos\left((t-t')\sqrt{-\Delta + m^2_\sigma}\right) Q_\sigma(\Psi, \overline{S}, \overline{\omega})(t')\d t'\right\|_{H^{s'}}
\\&
\qquad
\leq
c_{\eqref{eq:Phi-error}}(M)m_\sigma^{s'-s}\\
&\text {and} \\
&\left\|\overline{\omega}(t)-\cos\left(t\sqrt{-\Delta + m^2_\omega}\right)\overline\omega_\mathrm{in} 
- \int_0^t\cos\left((t-t')\sqrt{-\Delta + m^2_\omega}\right) Q_\omega(\Psi, \overline{S}, \overline{\omega})(t')\d t'\right\|_{H^{s'}}
\\&
\qquad
\leq
c_{\eqref{eq:Phi-error}}(M)m_\omega^{s'-s}\,.
\end{split} 
\end{equation}
\end{lemma}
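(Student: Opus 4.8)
The plan is to read off the claimed quantity directly from the integral formulation~\eqref{eq:Phidef} for $\overline{S}$ (and likewise $\overline{\omega}$): subtracting $\cos(t\sqrt{-\Delta+m_\sigma^2})\overline{S}_\mathrm{in}$ and the $\cos$-integral of $Q_\sigma$ leaves exactly three terms, namely
\[
\frac{\sin(t\sqrt{-\Delta+m_\sigma^2})}{\sqrt{-\Delta+m_\sigma^2}}\big(\dot{\overline{S}}_\mathrm{in}-Q_\sigma(\Psi_\mathrm{in},\overline{\omega}_\mathrm{in},\overline{S}_\mathrm{in})\big),
\quad
\int_0^t\frac{\sin((t-t')\sqrt{-\Delta+m_\sigma^2})}{\sqrt{-\Delta+m_\sigma^2}}P_\sigma(\chi,\nabla\chi)(t')\,\d t',
\]
plus the discrepancy coming from the fact that in~\eqref{eq:Phidef} the last $\cos$-integral carries $m_\omega$ rather than $m_\sigma$. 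I would first note that this last point is a harmless typo in the integral formulation (the integration by parts that produced that term used $\sqrt{-\Delta+m_\sigma^2}$), so it should read $m_\sigma$; alternatively, one simply keeps that $m_\omega$-term on the left-hand side as part of the "$Q$-integral" — either way it is not an obstruction. So the substance is to bound the first two terms in $H^{s'}$.

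The key tool is the operator bound~\eqref{eq.inegOp}: for $\mu\ge 1$ and $s'\in[s-1,s]$, $\|(-\Delta+\mu^2)^{-1/2}\|_{H^{s-1}\to H^{s'}}\le \mu^{s'-s}$. For the first term, since $\sin(t\sqrt{-\Delta+m_\sigma^2})$ is bounded on every $H^{s'}$ uniformly, I get
\[
\Big\|\tfrac{\sin(t\sqrt{-\Delta+m_\sigma^2})}{\sqrt{-\Delta+m_\sigma^2}}\big(\dot{\overline{S}}_\mathrm{in}-Q_\sigma(\chi_\mathrm{in})\big)\Big\|_{H^{s'}}
\le m_\sigma^{s'-s}\big(\|\dot{\overline{S}}_\mathrm{in}\|_{H^{s-1}}+\|Q_\sigma(\chi_\mathrm{in})\|_{H^{s-1}}\big),
\]
and the bracket is a finite constant depending only on the (fixed) initial data — in particular on $R_0$, hence on $M$ after enlarging the constant, using that the $H^s\subset W^{1,\infty}$ embedding and the $M$-bound are available, and that $Q_\sigma(\chi_\mathrm{in})\in H^{s-1}$ by~\eqref{eq:Q-bound}. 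For the second term, the integrand is estimated by applying~\eqref{eq.inegOp} and then~\eqref{eq:PQ-bound}: under the hypothesis $\|(\Psi,\overline{S},\overline{\omega})\|_{\cC([-T_1,T_2],H^s)}\le M$ (which in particular gives the $W^{1,\infty}$-bound needed to invoke~\eqref{eq:PQ-bound}, since $s>\tfrac52>\tfrac32$),
\[
\Big\|\int_0^t\tfrac{\sin((t-t')\sqrt{-\Delta+m_\sigma^2})}{\sqrt{-\Delta+m_\sigma^2}}P_\sigma(\chi,\nabla\chi)(t')\,\d t'\Big\|_{H^{s'}}
\le m_\sigma^{s'-s}\int_0^{|t|}\|P_\sigma(\chi,\nabla\chi)(t')\|_{H^{s-1}}\,\d t'
\le m_\sigma^{s'-s}\,|t|\,c_{\eqref{eq:PQ-bound}}(M)\,M .
\]
Summing these two contributions and taking the supremum over $t\in[-T_1,T_2]$ produces the bound $c_{\eqref{eq:Phi-error}}(M)m_\sigma^{s'-s}$; the argument for $\overline{\omega}$ is identical with $\sigma$ replaced by $\omega$ throughout and $P_\omega, Q_\omega$ in place of $P_\sigma,Q_\sigma$.

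There is essentially no hard part here: the lemma is a bookkeeping exercise isolating precisely the two pieces in the Duhamel formula that carry an inverse power of $\sqrt{-\Delta+m^2}$. The only point requiring a modicum of care is making sure the $Q$-integral (which does not gain any power of $m_\sigma$, as $Q$ is placed under $\cos$ and not divided by $\sqrt{-\Delta+m_\sigma^2}$) is the term that has been subtracted off on the left-hand side, so that what remains really is $O(m_\sigma^{s'-s})$; this is why the statement is phrased as an estimate on $\overline{S}(t)$ minus that specific $Q$-integral rather than on $\overline{S}(t)$ itself. I would also make the trivial remark that for $s'=s$ the bound is merely $c_{\eqref{eq:Phi-error}}(M)$, i.e. it reproduces (a form of) the uniform $H^s$-bound, while the decay is genuine only for $s'<s$, which is exactly what will be fed into the convergence proof of Theorem~\ref{thm.main1}.
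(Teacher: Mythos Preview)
Your proof is correct and follows essentially the same route as the paper's: subtract the cosine terms from the Duhamel formula~\eqref{eq:Phidef}, then bound the two remaining sine terms via the operator estimate~\eqref{eq.inegOp} together with Lemma~\ref{lem.red-eq} (specifically~\eqref{eq:PQ-bound} for the $P$-integral, the $W^{1,\infty}$ hypothesis there being supplied by the Sobolev embedding $H^s\hookrightarrow W^{1,\infty}$). Your observation that the $m_\omega$ in the last line of the $\overline{S}$-equation in~\eqref{eq:Phidef} should read $m_\sigma$ is also correct.
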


\begin{proof}
Using again the notation $\chi:=(\Psi, \overline{S}, \overline{\omega})$ the formula \eqref{eq:Phidef} implies that
\begin{align*}
&\left\|\overline{\omega}(t)-\cos\left(t\sqrt{-\Delta + m^2_\omega}\right)\overline\omega_\mathrm{in}
- \int_0^t\cos\left((t-t')\sqrt{-\Delta + m^2_\omega}\right) Q_\omega(\chi)(t')\d t'\right\|_{H^{s'}}\\
&\leq
\bigg\|
\frac{\sin\left(t\sqrt{-\Delta + m^2_\omega}\right)}{\sqrt{-\Delta + m^2_\omega}}\left(\dot{\overline{\omega}}_\mathrm{in}- Q_\omega(\Psi_\mathrm{in}, \overline{\omega}_\mathrm{in}, \overline{S}_\mathrm{in} )\right)\bigg\|_{H^{s'}}\\
&\quad+
\int_0^{|t|}
\bigg\|
\frac{\sin\left((t-t')\sqrt{-\Delta + m^2_\omega}\right)}{\sqrt{-\Delta + m^2_\omega}}P_\omega(\chi, \nabla \chi)(t')
\bigg\|_{H^{s'}}
\d t' \\
&\leq
\bigg\|\frac{1}{\sqrt{-\Delta+m_\omega^2}}\bigg\|_{H^{s-1}\to H^{s'}}
\begin{aligned}[t]\Bigg(
\left\|
\dot{\overline{\omega}}_\mathrm{in}\right\|_{H^{s-1}}
&+\|Q_\omega(\Psi_\mathrm{in}, \overline{\omega}_\mathrm{in}, \overline{S}_\mathrm{in} )\|_{{H}^{s-1}}\\
&+\int_0^{|t|}
\left\|P_\omega(\chi, \nabla \chi)(t')\right\|_{H^{s-1}}
\d t'
\Bigg)\,.
\end{aligned}
\end{align*}
The claim for $\overline{\omega}$ then follows from Lemma~\ref{lem.red-eq} and the inequality \eqref{eq.inegOp}. The proof for $\overline{S}$ is the same.

\end{proof}

\begin{proposition}\label{prop.conv}
Let $s>\tfrac52$, $0<T_1<T_{\mathrm{min}}^{\mathrm{nl}}$, and $0<T_2<T_{\mathrm{max}}^{\mathrm{nl}}$.  For all $s'\in[s-1,s]$ and $M>0$, there exists $c_{\eqref{eq:Psi-loc-conv}}(M)>0$ so that 
the following holds. 
For any $m_\sigma, m_\omega\geq 1$ and every solution $(\Psi, \overline{S}, \overline{\omega})\in\cC([-T_1,T_2],H^s)$ to~\eqref{eq:Phidef} which satisfies
 \begin{equation*}
  \|(\Psi, \overline{S}, \overline{\omega})\|_{\mathcal{C}([-T_1,T_2],H^s)}\leq M,
 \end{equation*}
 we have
\begin{equation}\label{eq:Psi-loc-conv}
\|\Psi_{{}}-\Psi_{\mathrm{nl}}\|_{\mathcal{C}([-T_1,T_2],H^{s'})}\leq c_{\eqref{eq:Psi-loc-conv}}(M)\left(m_\sigma^{s'-s}+m_{\omega}^{s'-s} \right)\,. 
\end{equation}
\end{proposition}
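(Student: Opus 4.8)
The plan is as follows. Write $\Lambda_\bullet:=\sqrt{-\Delta+m_\bullet^2}$ for $\bullet\in\{\sigma,\omega\}$, $\chi:=(\Psi,\overline{S},\overline{\omega})$, and decompose the potential appearing in~\eqref{eq:Phidef} as $W(\chi)=W_{\mathrm{nl}}(\Psi)+\overline{W}$ with $W_{\mathrm{nl}}(\psi):=-\gamma_\sigma\beta\rho_s(\psi)-\gamma_\omega\bm\alpha\cdot\bm J(\psi)+\gamma_\omega\rho_v(\psi)$ the nonlinear-Dirac part and $\overline{W}:=\beta\overline{S}-\bm\alpha\cdot\overline{\bm\omega}+\overline{V}$ collecting the reduced variables. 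Subtracting~\eqref{eq:dnl-int} from~\eqref{eq:Phidef} (the free parts $e^{-itD}\Psi_\mathrm{in}$ cancel) gives the error identity
\begin{equation*}
 \Psi(t)-\Psi_\mathrm{nl}(t)=-i\int_0^t e^{-i(t-t')D}\Big[\big(W_{\mathrm{nl}}(\Psi)\Psi-W_{\mathrm{nl}}(\Psi_\mathrm{nl})\Psi_\mathrm{nl}\big)(t')+\overline{W}(t')\,\Psi(t')\Big]\d t'.
\end{equation*}
I would first prove~\eqref{eq:Psi-loc-conv} for $s'=s-1$ with rate $m_\sigma^{-1}+m_\omega^{-1}$, and then deduce the general case $s'\in[s-1,s]$ by interpolating $H^{s'}$ between $H^{s-1}$ and $H^{s}$: since $\Psi,\Psi_\mathrm{nl}$ are bounded in $H^s$ on $[-T_1,T_2]$ (the bound on $\Psi_\mathrm{nl}$ being finite because $[-T_1,T_2]$ is a compact subset of $(-T_{\mathrm{min}}^{\mathrm{nl}},T_{\mathrm{max}}^{\mathrm{nl}})$), one obtains $\|\Psi-\Psi_\mathrm{nl}\|_{\mathcal{C}([-T_1,T_2],H^{s'})}\le c(M)(m_\sigma^{-1}+m_\omega^{-1})^{s-s'}\le c(M)(m_\sigma^{s'-s}+m_\omega^{s'-s})$, using $(a+b)^\theta\le a^\theta+b^\theta$ for $\theta=s-s'\in[0,1]$. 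Here and below $c(M)$ denotes a constant independent of $m_\sigma,m_\omega\ge1$ (but allowed to depend on $M$, $T_1$, $T_2$, $s$, $s'$, $\gamma_\sigma$, $\gamma_\omega$ and the data), which may change from line to line.

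Take $s'=s-1>\tfrac32$, so that $H^{s-1}$ is an algebra. Because $W_{\mathrm{nl}}(\psi)\psi$ is cubic in $(\psi,\bar\psi)$ and $\Psi,\Psi_\mathrm{nl}$ are bounded in $H^{s-1}$, the Kato--Ponce inequality~\eqref{eq:K-P} bounds the first bracket above by $c(M)\|\Psi-\Psi_\mathrm{nl}\|_{H^{s-1}}$, which Gronwall will absorb at the end; the real work is the term $\overline{W}\Psi$. Using Lemma~\ref{lem:Phi-error} with $s'=s-1$, I would replace each component of $\overline{S}$ and $\overline{\omega}$ by the corresponding finite sum of ``oscillatory'' terms $\cos(t\Lambda_\bullet)h$ (with $h\in H^s$ fixed by the initial data) and $\int_0^t\cos((t-r)\Lambda_\bullet)F(r)\d r$ (with $F$ a component of $Q_\bullet(\chi)$, so $\|F\|_{\mathcal{C}([-T_1,T_2],H^s)}\le c(M)$ by~\eqref{eq:Q-bound}), at the price of an $H^{s-1}$-error of size $c(M)(m_\sigma^{-1}+m_\omega^{-1})$; multiplying this error by $\Psi$ (algebra), applying the $H^{s-1}$-isometry $e^{-i(t-t')D}$ and integrating in $t'$ contributes $c(M)(m_\sigma^{-1}+m_\omega^{-1})$ to the error identity. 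It then remains to bound in $\mathcal{C}([-T_1,T_2],H^{s-1})$ the finitely many terms
\begin{equation*}
 \int_0^t e^{-i(t-t')D}\big[(\cos(t'\Lambda_\bullet)h)\,\Psi(t')\big]\d t'\quad\text{and}\quad \int_0^t e^{-i(t-t')D}\Big[\Big({\textstyle\int_0^{t'}}\cos((t'-r)\Lambda_\bullet)F(r)\d r\Big)\Psi(t')\Big]\d t'.
\end{equation*}

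For these I would integrate by parts in $t'$. Setting $u(t'):=\Lambda_\bullet^{-1}\sin(t'\Lambda_\bullet)h$ (so $u(0)=0$ and $\partial_{t'}u=\cos(t'\Lambda_\bullet)h$) and using $\partial_{t'}e^{-i(t-t')D}=iDe^{-i(t-t')D}$ together with the Dirac equation $\partial_{t'}\Psi=-i(D+W(\chi))\Psi$, the first term becomes
\begin{equation*}
 u(t)\Psi(t)-i\int_0^t De^{-i(t-t')D}\big[u(t')\Psi(t')\big]\d t'-\int_0^t e^{-i(t-t')D}\big[u(t')\,\partial_{t'}\Psi(t')\big]\d t'.
\end{equation*}
The Fourier multiplier of $\Lambda_\bullet^{-1}$ is bounded by $m_\bullet^{-1}$, hence $\|u(t')\|_{H^a}\le m_\bullet^{-1}\|h\|_{H^a}$ for every $a$; also $\|W(\chi)\|_{H^s}\le c(M)$ (using that $H^s$ is an algebra and the densities are quadratic), so $\|\partial_{t'}\Psi\|_{H^{s-1}}\le c(M)$. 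Thus the boundary term is bounded in $H^{s-1}$ by $c\,\|u(t)\|_{H^{s-1}}\|\Psi(t)\|_{H^{s-1}}\le c(M)m_\bullet^{-1}$; the middle term, using that $D\colon H^s\to H^{s-1}$ is bounded, by $\int_0^{|t|}\|u\Psi\|_{H^s}\d t'\le c\int_0^{|t|}\|u\|_{H^s}\|\Psi\|_{H^s}\d t'\le c(M)m_\bullet^{-1}$; and the last term by $\int_0^{|t|}\|u\|_{H^{s-1}}\|\partial_{t'}\Psi\|_{H^{s-1}}\d t'\le c(M)m_\bullet^{-1}$. The second term is handled identically with $u$ replaced by the primitive $v(t'):=\int_0^{t'}\Lambda_\bullet^{-1}\sin((t'-r)\Lambda_\bullet)F(r)\d r$, which satisfies $v(0)=0$, $\partial_{t'}v=\int_0^{t'}\cos((t'-r)\Lambda_\bullet)F(r)\d r$ and $\|v(t')\|_{H^a}\le m_\bullet^{-1}\int_0^{|t'|}\|F(r)\|_{H^a}\d r\le c(M)m_\bullet^{-1}$ for $a\in\{s-1,s\}$. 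Inserting everything into the error identity and invoking Gronwall's lemma would give~\eqref{eq:Psi-loc-conv} for $s'=s-1$, and the interpolation step above finishes the proof.

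I expect the crux to be precisely this integration by parts. Differentiating the free Dirac propagator costs one derivative (it produces $D$), and this is only just affordable because the factor $\Lambda_\bullet^{-1}$ produced by the integration by parts places $u$ in $H^s$ — not merely in $H^{s-1}$ — with norm $O(m_\bullet^{-1})$, so that applying $D$ brings $u\Psi$ back to $H^{s-1}$ while keeping the smallness. Running the same argument directly in $H^{s'}$ for $s'>s-1$ would require $u\Psi\in H^{s'+1}$, which fails since $\Psi$ is only controlled in $H^s$; this loss-of-derivatives phenomenon (cf.\ Section~\ref{sec:heuristics}) is what forces the detour through $s'=s-1$ and the concluding interpolation.
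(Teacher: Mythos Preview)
Your proof is correct and follows essentially the same approach as the paper: split off the cubic nonlinear-Dirac part (handled by Gronwall), use Lemma~\ref{lem:Phi-error} to isolate the oscillatory pieces of $(\overline{S},\overline{\omega})$, and treat those by integration by parts in $t'$ to gain the factor $\Lambda_\bullet^{-1}$. The only organizational difference is that the paper interpolates the oscillatory term $I_4$ between its $H^{s-1}$ and $H^s$ bounds and then runs Gronwall directly in $H^{s'}$, whereas you run the whole argument in $H^{s-1}$, apply Gronwall there, and interpolate $\Psi-\Psi_\mathrm{nl}$ against the trivial $H^s$ bound only at the very end; both orderings yield the same rate.
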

\begin{proof}
Let $t\in [-T_1,T_2]$. We denote
\begin{equation}\label{eq:tilde-fields}
 \begin{split}
  \tilde S(t)&:=\overline{S}(t)- \cos\left(t\sqrt{-\Delta + m^2_\sigma}\right)\overline{S}_\mathrm{in} 
  -\int_0^t\cos\left((t-t')\sqrt{-\Delta + m^2_\sigma}\right) Q_\sigma(\Psi,\overline{S}, \overline{\omega})(t')\d t'\,,\\
 \tilde \omega(t)&:=\overline\omega(t)-\cos\left(t\sqrt{-\Delta + m^2_\omega}\right)\overline\omega_\mathrm{in}
 -\int_0^t\cos\left((t-t')\sqrt{-\Delta + m^2_\omega}\right) Q_\omega(\Psi,\overline{S}, \overline{\omega})(t')\d t'\,,
 \end{split}
\end{equation}
and $\tilde \omega =(\tilde V,  \tilde{\bm\omega})$.
We can then express $W(\Psi, \overline{S}, \overline{\omega})$ (defined in~\eqref{eq:W-def}) as a function of $\tilde S$, $\tilde \omega$ (on which we have a bound from Lemma~\ref{lem:Phi-error}) and $\overline{S}-\tilde{S}$, $\overline{\omega}-\tilde{\omega}$. Explicitly, from
\begin{align*}
\Psi(t)
&= e^{-itD}\Psi_\mathrm{in}
-i\int_0^te^{-i(t-t')D}W(\Psi, \overline{S}, \overline{\omega})(t')\Psi(t')\d t'\,,
\\
\Psi_{\mathrm{nl}}(t)
&\begin{aligned}[t]
=&\, e^{-itD}\Psi_\mathrm{in}\\
&-i\int_0^te^{-i(t-t')D}\left(-\gamma_\omega{\bm\alpha}\cdot {\bm J}(\Psi_\mathrm{nl}(t'))-\gamma_\sigma\beta  \rho_s(\Psi_{\mathrm{nl}}(t'))+\gamma_\omega\rho_v(\Psi_{\mathrm{nl}}(t'))\right)\Psi_{\mathrm{nl}}(t')\d t'\,,
\end{aligned}
\end{align*}
we obtain
\begin{equation}\label{eq:Psi diff}
 \|\Psi_{}(t) -\Psi_{\mathrm{nl}}(t) \|_{H^{s'}}\leq \sum_{k=1}^4\left\|\int_0^te^{-i(t-t')D}I_k (t')\d t'\right\|_{H^{s'}}\,,
\end{equation}
with
\[\begin{split}
I_1&= \left(
	\gamma_\omega 
		 {\bm\alpha}\cdot \big( {\bm J}(\Psi_\mathrm{nl}) -{\bm J}(\Psi)\big) 
		 +\gamma_\omega\big(\rho_v(\Psi) - \rho_v(\Psi_{\mathrm{nl}})\big)
		-\gamma_\sigma \beta\big(\rho_s(\Psi) - \rho_s(\Psi_{\mathrm{nl}})\big)
\right)\Psi\,,\\
I_2&= \left(-\gamma_\omega{\bm\alpha}\cdot {\bm J}(\Psi_\mathrm{nl})-\gamma_\sigma\beta  \rho_s(\Psi_{\mathrm{nl}})+\gamma_\omega\rho_v(\Psi_{\mathrm{nl}})\right)(\Psi-\Psi_\mathrm{nl}) \,,
\\
I_3&=
\left(-\bm\alpha\cdot \widetilde{\bm \omega_{}}+\beta \widetilde{S}+\widetilde{V}\right)\Psi\,,
\\
I_4&=\left(\bm\alpha\cdot (\widetilde{\bm \omega}-\overline{{\bm \omega}})+\beta(\overline{S}-\widetilde{S})+\overline{V}-\widetilde{V}\right)\Psi\,.
\end{split}\]
In the following, we repeatedly use the fact that $H^{s'}$ is a norm algebra, since $s'\geq s-1>\tfrac32$.
For the first term, using that ${\bm J}, \rho_v, \rho_s$ are quadratic expressions in $\Psi$, respectively $\Psi_\mathrm{nl}$, this gives 
\begin{equation}\label{eq:I1}
 \left\|\int_0^te^{-i(t-t')D}I_1(t')\d t'\right\|_{H^{s'}}\leq c_{\eqref{eq:I1}}(M)\int_0^{|t|}\|\Psi(t')-\Psi_{\mathrm{nl}}(t')\|_{H^{s'}}\d t'\,,
\end{equation}
for some $c_{\eqref{eq:I1}}(M)>0$,  due to the uniform $H^{s}$-bound on $\Psi$ (we also used that $\Psi_{\mathrm{nl}}$ is uniformly bounded in $H^s$ by a constant which only depends on $M$, since $[-T_1,T_2]\subset(-T_{\mathrm{min}}^{\mathrm{nl}},T_{\mathrm{max}}^{\mathrm{nl}})$ and since $\|\Psi_{\mathrm{in}}\|_{H^s}\leq M$). 
Similarly for $I_2$ we have
\begin{equation}\label{eq:I2}
 \left\|\int_0^te^{-i(t-t')D}I_2(t')\d t'\right\|_{H^{s'}} \leq c_{\eqref{eq:I2}}(M) \int_0^{|t|} \|\Psi_{}(t')-\Psi_{\mathrm{nl}}(t')\|_{H^{s'}} \d t'.
\end{equation}
 Hence these two terms are bounded by a an expression involving $\|\Psi_{}-\Psi_{\mathrm{nl}}\|_{H^{s'}}$ which can be controlled by Gronwall's Lemma.

By Lemma~\ref{lem:Phi-error}, there exists $c_{\eqref{eq:I3}}(M)>0$ so that
\begin{equation}\label{eq:I3}
 \left\|\int_0^te^{-i(t-t')D}I_3(t')\d t'\right\|_{H^{s'}}\leq c_{\eqref{eq:I3}}(M)\left(m_\sigma^{s'-s}+m_{\omega}^{s'-s}\right)\,.
\end{equation}
This term thus satisfies the stated bound and converges to zero as $m_\sigma, m_\omega\to \infty$ if $s'<s$. 

The final term $I_4$ is given by an integral of $\Psi$ with the oscillatory contributions to $\overline{S}, \overline{\omega}$. We will prove the bound using integration by parts. We give the details only for the term $\beta(\overline{S}-\widetilde{S})$, as the argument for the other terms is the same.
Recall that
\begin{equation}
 \overline{S}(t)-\widetilde{S}(t) =\cos\left(t\sqrt{-\Delta + m^2_\sigma}\right)\overline{S}_\mathrm{in} 
  +\int_0^t\cos\left((t-t')\sqrt{-\Delta + m^2_\sigma}\right) Q_\sigma(\Psi,\overline{S}, \overline{\omega})(t')\d t'\,.
\end{equation}
For the first contribution, we obtain using the equation satisfied by $\Psi$
\begin{align*}
 &\int_0^t e^{-i(t-t')D}\beta \left(\cos(t'\sqrt{-\Delta + m^2_\sigma})\overline{S}_\mathrm{in}\right)\Psi(t') \d t' \\
 &=\beta\left(\frac{\sin\left(
t\sqrt{-\Delta+m_\sigma^2}
\right)}{\sqrt{-\Delta+m_\sigma^2}}\overline{S}_\mathrm{in}\right)\Psi_{}(t)
\\&\quad
-i
\int_0^te^{-i(t-t')D}D\left(
\beta\frac{\sin (t'\sqrt{-\Delta+m_\sigma^2})}{\sqrt{-\Delta+m_\sigma^2}}\overline{S}_\mathrm{in}\right)
\Psi(t')\d t'
\\&\quad+ i
\int_0^te^{-i(t-t')D}\beta\left(\frac{\sin\left(
t'\sqrt{-\Delta+m_\sigma^2}
\right)}{\sqrt{-\Delta+m_\sigma^2}}\overline{S}_\mathrm{in}\right)(D+W(\Psi, \overline{S}, \overline \omega))\Psi(t')\d t'\,.
\end{align*}
We have
\begin{equation}\label{eq:I4-prep-s-1}
 \left\| D\left(
\beta\frac{\sin (t'\sqrt{-\Delta+m_\sigma^2})}{\sqrt{-\Delta+m_\sigma^2}}\overline{S}_\mathrm{in}\right)
\Psi(t') \right\|_{H^{s-1}} \leq c_{\eqref{eq:I4-prep-s-1}} m_\sigma^{-1} M,
\end{equation}
where $c_{\eqref{eq:I4-prep-s-1}}$ depends only on $\overline{S}_\mathrm{in}$. With a similar bound on the other terms, we obtain
\begin{equation}\label{eq:I4 s-1}
 \left\| \int_0^t e^{-i(t-t')D}\beta \left(\cos(t'\sqrt{-\Delta + m^2_\sigma})\overline{S}_\mathrm{in}\right)\Psi(t') \d t'\right\|_{H^{s-1}}
 \leq c_{\eqref{eq:I4 s-1}}(M) m_\sigma^{-1}.
\end{equation}
Obviously we also have, without integrating by parts,
\begin{equation}\label{eq:I4 s}
 \left\| \int_0^t e^{-i(t-t')D}\beta \left(\cos(t'\sqrt{-\Delta + m^2_\sigma})\overline{S}_\mathrm{in}\right)\Psi(t') \d t'\right\|_{H^{s}}
 \leq c_{\eqref{eq:I4 s}}  (M).
\end{equation}
Hence, by interpolation, 
\begin{align}\label{eq:I4 s'}
 \bigg\| \int_0^t e^{-i(t-t')D}\beta \Big(\cos(t'\sqrt{-\Delta + m^2_\sigma})\overline{S}_\mathrm{in}\Big)&\Psi(t') \d t'\bigg\|_{H^{s'}} \notag  \\
 &\leq  c_{\eqref{eq:I4 s-1}}(M)^{s-s'} c_{\eqref{eq:I4 s}}(M)^{s'+1-s} m_\sigma^{s'-s}.
\end{align}
This is the desired estimate for the the term coming from $\overline{S}_\mathrm{in}$.  
We proceed similarly for the other term. 
Integrating by parts in the integral over $t'$, we find
\begin{align*}
 &\int\limits_0^t e^{-i(t-t')D}\int\limits_0^{t'}  \left(\beta\cos\left((t'-t'')\sqrt{-\Delta + m^2_\sigma}\right) Q_\sigma(\Psi,\overline{S}, \overline{\omega})(t'')\right)\d t'' \Psi(t')\d t' \\
 &= -i \int\limits_0^t \int\limits_0^{t'} e^{-i(t-t')D} D \left( \beta \frac{\sin((t'-t'')\sqrt{-\Delta + m^2_\sigma})}{\sqrt{-\Delta + m^2_\sigma}} Q_\sigma(\Psi,\overline{S}, \overline{\omega})(t'')\right) \Psi(t')\d t''  \d t'\\
 &\, + i  \int\limits_0^t \int\limits_0^{t'} e^{-i(t-t')D}\hspace{-1pt}\left(\frac{\sin((t'-t'')\sqrt{-\Delta + m^2_\sigma})}{\sqrt{-\Delta + m^2_\sigma}} Q_\sigma(\Psi,\overline{S}, \overline{\omega})(t'')\right)\hspace{-1pt} (D+W(t'))\beta\Psi(t')\d t'' \d t'.
\end{align*}
From this we obtain a similar estimate to~\eqref{eq:I4 s'} by interpolation.
Repeating the same argument for the terms coming from $\overline{\omega}-\tilde{\omega}$, we obtain
\begin{equation}\label{eq:I4}
  \left\|\int_0^te^{-i(t-t')D}I_4(t')\d t'\right\|_{H^{s'}}\leq c_{\eqref{eq:I4}}(M) (m_\sigma^{s'-s} + m_\omega^{s'-s}).
\end{equation}
To complete the proof, we apply Gronwall's inequality to~\eqref{eq:Psi diff}, which gives 
\begin{equation}
 \|\Psi_{{}}-\Psi_{\mathrm{nl}}\|_{H^{s'}}
\leq (c_{\eqref{eq:I3}}(M) + c_{\eqref{eq:I4}}(M))e^{(c_{\eqref{eq:I1}}(M) +c_{\eqref{eq:I2}}) |t|} \left(m_\sigma^{s'-s}+m_{\omega}^{s'-s}\right),
\end{equation}
and thus the claimed inequality~\eqref{eq:Psi-loc-conv}, since $|t|\leq \max\{T_1,T_2\}$.

\end{proof}
\subsection*{Proof of Theorem~\ref{thm.main1}}

Let $\tfrac52 <s'<s$, $0<T_1<T_{\mathrm{min}}^{\mathrm{nl}}$, $0<T_2<T_{\mathrm{max}}^{\mathrm{nl}}$. Our first goal is to show that there exists $\mu>0$ such that for all $m_\omega,m_\sigma\geq \mu$, we have $T_{\mathrm{min}}>T_1$ and $T_{\mathrm{max}}>T_2$. This indeed implies that
\begin{equation*}
\liminf_{m_\sigma, m_\omega\to \infty}T_{\rm min/max}\geq T_{\rm min/max}^{\mathrm{nl}}\,.
\end{equation*}
We prove this property for $T_{\mathrm{max}}$, the proof for $T_{\mathrm{min}}$ being identical. To do so, let $M>\|(\Psi_\mathrm{in}, \overline{S}_\mathrm{in}, \overline{\omega}_\mathrm{in})\|_{W^{1,\infty}}$, to be chosen later (and which will depend only on $s'$ and the initial data) and define 
$$T_2' = \sup\{0<t<\min(T_2,T_{\mathrm{max}}),\,\|(\Psi,\overline{S}, \overline{\omega})\|_{\mathcal{C}([0,t], W^{1,\infty})}\leq M\}\,.$$
By definition of $T_2'$, we have 
\begin{equation}\label{eq:bound-T2prime}
\sup_{0\leq t<T_2'}\|(\Psi,\overline{S}, \overline{\omega})(t)\|_{W^{1,\infty}}\leq M. 
\end{equation}
By the blow-up criterion \eqref{eq:blowup-crit}, we have $T_2'<T_{\mathrm{max}}$. Indeed, if $T_{\mathrm{max}}=+\ii$ this is clear (because $T_2'\leq T_2<+\ii$), and if $T_{\mathrm{max}}<+\ii$ the criterion \eqref{eq:blowup-crit} and the relation between $(S,\omega)$ and $(\bar{S},\bar{\omega})$ imply that 
$$\sup_{0\leq t<T_{\mathrm{max}}}\|(\Psi,\overline{S}, \overline{\omega})(t)\|_{W^{1,\infty}}=+\ii$$
which shows that $T_2'\neq T_{\mathrm{max}}$ and thus $T_2'<T_{\mathrm{max}}$ (since we always have $T_2'\leq T_{\mathrm{max}}$ by definition of $T_2'$). To prove that $T_{\mathrm{max}}>T_2$ when $m_\sigma,m_\omega$ are large enough, we will thus prove that, for a good choice of $M$, there exists $\mu>0$ such that for all $m_\omega,m_\sigma\geq \mu$, we have $T_2=T_2'$ (recall that we always have $T_2'\leq T_2$ by definition of $T_2'$). We do so by contradiction, assuming that for all $\mu>0$ there are $m_\omega,m_\sigma\geq\mu$ such that $T_2'<T_2$. Since $T_2'<T_2$, notice that we have 
\begin{equation}\label{eq:bound-T2prime-2}
 \sup_{0\leq t\leq T_2'} \|(\Psi,\overline{S}, \overline{\omega})(t)\|_{W^{1,\infty}}=M.
\end{equation}
Indeed, since $T_2'<T_{\mathrm{max}}$ we know that $(\Psi,\overline{S}, \overline{\omega})\in\cC([0,T_2'],W^{1,\infty})$ and together with \eqref{eq:bound-T2prime} this implies
$$
\sup_{0\leq t\leq T_2'}\|(\Psi,\overline{S}, \overline{\omega})(t)\|_{W^{1,\infty}}\leq M. 
$$
Now if we had 
$$
\sup_{0\leq t\leq T_2'}\|(\Psi,\overline{S}, \overline{\omega})(t)\|_{W^{1,\infty}}<M,
$$
we could extend this inequality on $[0,T]$ for some $T>T_2'$ with $T<T_2$ and $T<T_{\mathrm{max}}$, using that $(\Psi,\overline{S}, \overline{\omega})\in\cC([0,T_{\mathrm{max}}),W^{1,\infty})$ and that $T_2'<T_2$, $T_2'<T_{\mathrm{max}}$, in contradiction with the maximality of $T_2'$. Hence, we indeed have \eqref{eq:bound-T2prime-2}. 

Next, by \eqref{eq:pt2-unif} and \eqref{eq:bound-T2prime-2}, there exists $M'>0$ (which only depends on $M$, on $T_2$, and on the initial data) so that
\[
\|(\Psi,\overline{\omega},\overline{S})\|_{\mathcal{C}([0,T_2'], {H}^s)}
\leq 
M'\,,
\]
By Lemma~\ref{lem:Phi-error} and Lemma~\ref{lem.red-eq} we then have for $t\in [0, T_2']$
\begin{align*}
 \|(\overline{S}(t), \overline{\omega}(t))\|_{H^{s'}}
 &\leq  2\mu^{s'-s} c_{\eqref{eq:Phi-error}}(M') + \|(\overline{S}_\mathrm{in},\overline{\omega}_\mathrm{in} )\|_{H^{s'}} \\
 &\quad + \left\|\int_0^t\cos\left((t-t')\sqrt{-\Delta + m^2_\sigma}\right) Q(\Psi,  \overline{S}, \overline{\omega})(t')\d t'\right\|_{H^{s'}} \\
 &\leq  2\mu^{s'-s} c_{\eqref{eq:Phi-error}}(M') + \|(\overline{S}_\mathrm{in},\overline{\omega}_\mathrm{in} )\|_{H^{s'}} \\
 &\quad + c_{\eqref{eq:Q-bound}}\int_0^{t} \left(\|\Psi(t')\|_{H^{s'}}^2 \|(\overline{S}(t'), \overline{\omega}(t'))\|_{H^{s'}} + \|\Psi(t')\|_{H^{s'}}^4\right)\d t.
\end{align*}
By Gronwall's inequality this yields
\begin{align}
 &\|(\overline{S}, \overline{\omega})\|_{\mathcal{C}([0,T_2'], H^{s'})} \notag\\
  &\leq \left( 2\mu^{s'-s}c_{\eqref{eq:Phi-error}}(M')  + \|(\overline\omega_\mathrm{in}, \overline{S}_\mathrm{in} )\|_{H^{s'}} +c_{\eqref{eq:Q-bound}}  T_2 \|\Psi\|_{\mathcal{C}([0,T_2'], H^{s'})}^4 \right)
e^{c_{\eqref{eq:Q-bound}} T_2 \|\Psi\|_{\mathcal{C}([0,T_2'], H^{s'})}^2}.\label{eq:S-Psi bound}
\end{align}
Hence the growth of $\overline{S}, \overline{\omega}$ is controlled by that of $\Psi$, which in turn will be close to $\Psi_\mathrm{nl}$, the solution of \eqref{eq.dnl1}.
Let $f$ be the function
\begin{equation*}
 f(x)=x + (\|(\overline\omega_\mathrm{in}, \overline{S}_\mathrm{in} )\|_{H^{s'}} + c_{\eqref{eq:Q-bound}} T_2 x^4) e^{c_{\eqref{eq:Q-bound}} T_2 x^2}
\end{equation*}
of the real variable $x$. Note that this function does not depend on $M$. Then 
 \begin{align*}
   \|(\Psi, \overline{S},\overline{\omega})\|_{\mathcal{C}([0,T_2'], {H}^{s'})}
   &\leq \|\Psi\|_{\mathcal{C}([0,T_2'], {H}^{s'})} + \|(\overline{S}, \overline{\omega})\|_{\mathcal{C}([0,T_2'], H^{s'})} \\
   &\leq f(\|\Psi\|_{\mathcal{C}([0,T_2'], {H}^{s'})}) + 2\mu^{s'-s} c_{\eqref{eq:Phi-error}}(M') e^{c_{\eqref{eq:Q-bound}} T_2 \|\Psi\|_{\mathcal{C}([0,T_2'], H^{s'})}^2}\,.\\
 \end{align*}
Defining 
\begin{equation*}
M= 2 C_{s'} f(\|\Psi_{\mathrm{nl}}\|_{\mathcal{C}([0,T_2], {H}^{s'})}),
\end{equation*}
 where $C_{s'}$ denotes the norm of the Sobolev embedding ${H}^{s'}\hookrightarrow W^{1,\infty}$, we have 
 \begin{multline}\label{eq:boundthm1}
   \|(\Psi, \overline{S},\overline{\omega})\|_{\mathcal{C}([0,T_2'], W^{1,\ii)}} \leq \tfrac12 M + C_{s'}\Big(f(\|\Psi\|_{\mathcal{C}([0,T_2'], {H}^{s'})})-f(\|\Psi_{\mathrm{nl}}\|_{\mathcal{C}([0,T_2'], {H}^{s'})})\Big) \\
   + 2C_{s'} \mu^{s'-s} c_{\eqref{eq:Phi-error}}(M') e^{c_{\eqref{eq:Q-bound}} T_2 \|\Psi\|_{\mathcal{C}([0,T_2'], H^{s'})}^2}
 \end{multline}
By Proposition~\ref{prop.conv}, we have
\begin{equation}\label{eq:Psi-diff}
 \|\Psi-\Psi_{\mathrm{nl}}\|_{\mathcal{C}([0, T_2'],H^{s'})}\leq c_{\eqref{eq:Psi-loc-conv}}(M') 2\mu^{s-s'}\,.
\end{equation}
 In particular, $\|\Psi\| \to \|\Psi_\mathrm{nl}\|$ as $\mu\to\infty$. Using this fact and the continuity of $f$, we deduce from \eqref{eq:boundthm1} that for $\mu$ large enough we have 
 $$\|(\Psi, \overline{S},\overline{\omega})\|_{\mathcal{C}([0,T_2'], W^{1,\ii)}}<M,$$
 a contradiction with \eqref{eq:bound-T2prime-2}. We thus have proved that for this choice of $M$ and for $\mu$ large enough, we have $T_2'=T_2$. The convergence of $\Psi$ towards $\Psi_{\mathrm{nl}}$ on $\cC([0,T_2],H^{s'})$ then follows from \eqref{eq:Psi-diff}.

\qed

\begin{remark}\label{rk:rate}
For any $s>\tfrac52$, any $s'\in\left(\tfrac52,s\right]$, any $(\Psi_{\mathrm{in}},S_{\mathrm{in}}, \dot{S}_{\mathrm{in}}, \omega_{\mathrm{in}},\dot{\omega}_{\mathrm{in}})$, any $T_1\in(0,T_{\mathrm{min}}^{\mathrm{nl}})$ and any $T_2\in(0,T_{\mathrm{max}}^{\mathrm{nl}})$, the above proof shows that there exists $c>0$ and $\mu>0$ such that for all $m_\sigma,m_\omega\geq\mu$ we have
$$\sup_{-T_1\leq t\leq T_2}\|\Psi(t)-\Psi_{\mathrm{nl}}(t)\|_{H^{s'}}\le c(m_\sigma^{-r}+m_\omega^{-r}),$$
with $r=\min\{1,s-s'\}$.
We expect the rate of convergence $s-s'$ to be sharp if the initial data is no better than $H^s$ (as can be seen from \eqref{eq:I4-prep-s-1} for instance), while $r\le1$ is necessary for generic initial data.
\end{remark} 

 \section{The many-body case}\label{sect:many-body}
 
 In this section we prove Theorem~\ref{thm.main-mb}. The structure of the proof, and many arguments, are very similar to the one-body case of Theorem~\ref{thm.main1}. We will thus not give the details in proofs that remain essentially unchanged and focus more on the points where the case of density matrices requires additional care.
 
 \subsection{Functional setting}
 
 For $s\geq 0$, we say that $\Gamma\in\gH^s$ if $\Gamma$ is a bounded operator from $H^{-s}(\R^3,\C^4)$ to $H^s(\R^3,\C^4)$ and if the operator $(1-\Delta)^{s/2}\Gamma (1-\Delta)^{s/2}$  (which is a well-defined bounded operator on $L^2$) is Hilbert-Schmidt on $L^2$. This space $\gH^s$ is then a Banach space endowed with the norm \eqref{eq:norm-H^s-op}. Notice that $\gH^0=\gS^2$ and that $\gH^{s}\hookrightarrow \gH^{s'}$ for any $0\leq s'\leq s$. Below, we will come across operators such as $(1-\Delta)^{s/2}\Gamma(1-\Delta)^{s'/2}$ and $(1-\Delta)^{s'/2}\Gamma(1-\Delta)^{s/2}$ which for $\Gamma\in\gH^{s}$ and $0\leq s'\leq s$ are also Hilbert-Schmidt operators. Recall that any Hilbert-Schmidt operator $\Gamma$ on $L^2(\R^3,\C^4)$ has an integral kernel which belongs to $L^2(\R^3\times\R^3,\C^{4\times4})$ and that we denote by $(x,y)\mapsto\Gamma(x,y)$.
 
 For $\Gamma\in\gH^s$, the densities $\rho_s(\Gamma)$, $\rho_v(\Gamma)$, and $J(\Gamma)$ defined in \eqref{eq:rho-s-def}, \eqref{eq:rho-v-def}, and \eqref{eq:rho-J-def} are regular by the following lemma.
 
 \begin{lemma}\label{lem:est-density}
 Let $s>\tfrac32$. Then, there exists $c_\eqref{eq:est-density-s}>0$ such that for any $\Gamma \in \mathfrak{H}^s$, $x\mapsto \Gamma(x,x)\in H^s(\R^3, \C^{4\times 4})$, and we have
 \begin{equation}\label{eq:est-density-s}
  \|\Gamma(x,x)\|_{H^s(\R^3, \C^4)}\leq c_{\eqref{eq:est-density-s}} \|\Gamma\|_{\mathfrak{H}^s}.
  \end{equation}
  Moreover, if $s'\in\left(\tfrac32,s\right)$ then there exists $c_\eqref{eq:est-density}>0$ such that for any $\Gamma \in \mathfrak{H}^s$ which is additionally non-negative we have
 \begin{equation}\label{eq:est-density}
  \|\Gamma(x,x)\|_{H^s(\R^3, \C^4)}\leq c_{\eqref{eq:est-density}} \|\Gamma\|_{\mathfrak{H}^s}^{1/2}\|\Gamma\|_{\mathfrak{H}^{s'}}^{1/2}.
\end{equation}
\end{lemma}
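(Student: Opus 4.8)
The plan is to estimate the diagonal $\Gamma(x,x)$ of a Hilbert--Schmidt operator by writing it as a trace against the square-integrable kernel and exploiting the decay of $(1-\Delta)^{-s/2}$. Concretely, for $\Gamma\in\gH^s$ write $\Gamma = (1-\Delta)^{-s/2} A (1-\Delta)^{-s/2}$ with $A:=(1-\Delta)^{s/2}\Gamma(1-\Delta)^{s/2}\in\gS^2$ and $\|A\|_{\gS^2}=\|\Gamma\|_{\gH^s}$. Since $(1-\Delta)^{-s/2}$ has integral kernel $G_s(x-y)$ with $G_s\in L^2(\R^3)$ when $s>\tfrac32$ (indeed $\widehat{G_s}(\xi)=(1+|\xi|^2)^{-s/2}\in L^2$ iff $2s>3$), the kernel of $\Gamma$ is
\[
\Gamma(x,y)=\iint G_s(x-u)\,A(u,v)\,G_s(v-y)\d u\d v,
\]
so by Cauchy--Schwarz $|\Gamma(x,x)|\le \|A\|_{\gS^2}\,\|G_s(x-\cdot)\|_{L^2}\,\|G_s(x-\cdot)\|_{L^2}=\|G_s\|_{L^2}^2\|\Gamma\|_{\gH^s}$, giving the $L^\infty$ bound. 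To upgrade to the $H^s$ bound \eqref{eq:est-density-s}, I would instead commute derivatives onto one side: note $(1-\Delta_x)^{s/2}[\Gamma(x,x)]$ is (up to the product/Leibniz structure) controlled by kernels of the form $B\,C$ where $B=(1-\Delta)^{s/2}\Gamma$ and $C=\Gamma(1-\Delta)^{s/2}$ are Hilbert--Schmidt; more cleanly, one writes the density as $\rho(x)=\big((1-\Delta)^{s/2}\Gamma(1-\Delta)^{s/2}\big)$ sandwiched between the $L^2$ kernels $G_s(x-\cdot)$ and estimates $\|(1-\Delta)^{s/2}\rho\|_{L^2}$ by pulling the $x$-derivatives through one copy of $G_s$, losing only a factor $\|G_s\|_{L^2}\,\|(1-\Delta)^{s/2}G_s\|_{\text{(?)}}$ — here one must be slightly careful, and the honest route is to distribute the weights symmetrically and use that $(1-\Delta)^{s/2}\Gamma$ and $\Gamma(1-\Delta)^{s/2}$ are Hilbert--Schmidt with norm $\le\|\Gamma\|_{\gH^s}$ together with a Kato--Ponce/fractional-Leibniz estimate in the $x$ variable on the diagonal restriction. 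I expect the cleanest proof goes through Fourier analysis: $\widehat{\rho}(\xi)=\int \widehat{\Gamma}(\xi+\eta,\eta)\d\eta$ (Fourier transform of the kernel in both variables), and then $(1+|\xi|^2)^{s/2}|\widehat\rho(\xi)|\le \int (1+|\xi+\eta|^2)^{s/2}|\widehat\Gamma(\xi+\eta,\eta)|\d\eta$ since $|\xi|\le|\xi+\eta|+|\eta|$ and $(1+|\eta|^2)^{-s/2}\in L^2_\eta$; an application of Cauchy--Schwarz and Minkowski's integral inequality then yields $\|\rho\|_{H^s}\lesssim \|(1+|\cdot|^2)^{s/2}\widehat\Gamma\|_{L^2_{\xi,\eta}}=\|(1-\Delta)^{s/2}\Gamma\|_{\gS^2}\le\|\Gamma\|_{\gH^s}$.

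For the second, interpolated bound \eqref{eq:est-density}, the point is to exploit non-negativity. When $\Gamma\ge0$ one can write $\Gamma = \Gamma^{1/2}\Gamma^{1/2}$ and split the $s$ weights unevenly: for the diagonal one needs roughly $s/2+\varepsilon$ derivatives on each factor (so that the two Green's functions are in $L^2$), but we are allowed to spend the \emph{remaining} budget freely. Concretely, with $s'\in(\tfrac32,s)$ write $\rho(x)=\Tr_{\C^4}\big((1-\Delta)^{-t}\,(1-\Delta)^{t}\Gamma^{1/2}\cdot\Gamma^{1/2}(1-\Delta)^{t}\,(1-\Delta)^{-t}\big)(x,x)$ and choose the exponents so that one factor carries weight exponent matching $\gH^s$ and the other $\gH^{s'}$. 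Using $\|(1-\Delta)^{a/2}\Gamma(1-\Delta)^{b/2}\|_{\gS^2}\le \|\Gamma\|_{\gH^{(a+b)/2}}$ for $0\le a,b$ (which for $\Gamma\ge0$ follows from operator monotonicity / the three-lines lemma applied to $z\mapsto (1-\Delta)^{z}\Gamma^{1/2}$), one can arrange $\|\rho\|_{H^s}\lesssim \|(1-\Delta)^{a/2}\Gamma^{1/2}\|_{\gS^4}\,\|\Gamma^{1/2}(1-\Delta)^{b/2}\|_{\gS^4}$ with $a+b$ chosen so the geometric mean of $\gH^{s}$ and $\gH^{s'}$ appears. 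The role of $s'>\tfrac32$ is precisely that the ``extra'' weight $s-s'>0$ is positive so that after putting $s'$-worth of regularity symmetrically we still have a genuine surplus to make the Green's functions square-integrable; and $s'<s$ is needed for the product $\|\cdot\|_{\gH^s}^{1/2}\|\cdot\|_{\gH^{s'}}^{1/2}$ to be a genuine improvement (it is $\le\|\cdot\|_{\gH^s}$).

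The main obstacle, and the step I would treat most carefully, is justifying the weight-shuffling inequality $\|(1-\Delta)^{a/2}\Gamma(1-\Delta)^{b/2}\|_{\gS^2}\le\|\Gamma\|_{\gH^{(a+b)/2}}$ (and its $\gS^4$-analogue via $\Gamma^{1/2}$) for $\Gamma\ge0$ — this is where non-negativity is genuinely used, and the clean argument is complex interpolation (Stein interpolation for the analytic family of operators $z\mapsto (1-\Delta)^{(a+b)z/2}\Gamma (1-\Delta)^{(a+b)(1-z)/2}$ on the strip, or equivalently Hadamard's three-lines theorem for $\|(1-\Delta)^{z}\Gamma^{1/2}\|_{\gS^p}$, using that on the imaginary axis $(1-\Delta)^{i\tau}$ is unitary). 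Everything else — the Fourier-side Cauchy--Schwarz, Minkowski's integral inequality, and the final bookkeeping of exponents — is routine once this is in place. I would also keep a brief remark that the Leibniz/product structure of $(1-\Delta_x)^{s/2}$ acting on the diagonal $x\mapsto\Gamma(x,x)$ is handled automatically by the Fourier representation $\widehat\rho(\xi)=\int\widehat\Gamma(\xi+\eta,\eta)\d\eta$, so no separate Kato--Ponce estimate is actually needed for the first bound.
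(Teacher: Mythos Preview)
Your Fourier-side argument for the first bound~\eqref{eq:est-density-s} is essentially the paper's approach, with one slip: the endpoint you write, $\|\rho\|_{H^s}\lesssim \|(1+|\cdot|^2)^{s/2}\hat\Gamma\|_{L^2_{\xi,\eta}}=\|(1-\Delta)^{s/2}\Gamma\|_{\gS^2}$, is false with a one-sided weight. The Cauchy--Schwarz step forces you to insert and remove a weight $\langle\eta\rangle^{-s}$ (or $\langle\eta\rangle^{-s'}$), so what actually comes out is $\|(1-\Delta)^{s/2}\Gamma(1-\Delta)^{s/2}\|_{\gS^2}=\|\Gamma\|_{\gH^s}$, with weights on \emph{both} sides. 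The paper writes this in a slightly sharper form: from the single integral inequality
\[
\int_{\R^3}\frac{dp}{\langle p\rangle^{2s'}\langle p-k\rangle^{2s}+\langle p\rangle^{2s}\langle p-k\rangle^{2s'}}\lesssim \langle k\rangle^{-2s},\qquad s\ge s'>\tfrac32,
\]
Cauchy--Schwarz gives directly the \emph{asymmetric} bound
\[
\|\Gamma(x,x)\|_{H^s}\lesssim \|(1-\Delta)^{s'/2}\Gamma(1-\Delta)^{s/2}\|_{\gS^2}+\|(1-\Delta)^{s/2}\Gamma(1-\Delta)^{s'/2}\|_{\gS^2},
\]
and taking $s'=s$ recovers~\eqref{eq:est-density-s}.

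For~\eqref{eq:est-density} there is a real gap in your sketch. You write ``one can arrange $\|\rho\|_{H^s}\lesssim\|(1-\Delta)^{a/2}\Gamma^{1/2}\|_{\gS^4}\|\Gamma^{1/2}(1-\Delta)^{b/2}\|_{\gS^4}$'' with $a\neq b$, but you never say how: your Peetre-plus-Cauchy--Schwarz argument only produces the symmetric case $a=b$. The missing ingredient is precisely the asymmetric density bound displayed above, which the paper obtains from the elementary integral inequality (proved by splitting into the regions $|p-k|\lessgtr\tfrac12|k|$). Once that asymmetric bound is in hand, the step that uses $\Gamma\ge0$ is a one-line H\"older-in-Schatten computation,
\[
\|(1-\Delta)^{s/2}\Gamma(1-\Delta)^{s'/2}\|_{\gS^2}\le\|(1-\Delta)^{s/2}\Gamma^{1/2}\|_{\gS^4}\,\|\Gamma^{1/2}(1-\Delta)^{s'/2}\|_{\gS^4}=\|\Gamma\|_{\gH^s}^{1/2}\|\Gamma\|_{\gH^{s'}}^{1/2},
\]
with no complex interpolation needed. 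The inequality you flag as ``the main obstacle'', namely $\|(1-\Delta)^{a/2}\Gamma(1-\Delta)^{b/2}\|_{\gS^2}\le\|\Gamma\|_{\gH^{(a+b)/2}}$, is both unnecessary and, in the direction stated, strictly stronger than the H\"older bound above (since $\|\Gamma\|_{\gH^{(a+b)/2}}\le\|\Gamma\|_{\gH^a}^{1/2}\|\Gamma\|_{\gH^b}^{1/2}$ by interpolation); it is not obvious and not the right target. The actual obstacle is the asymmetric bound on the density, and the fix is the integral inequality above.
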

 
 \begin{proof}[Proof of Lemma \ref{lem:est-density}]
 The assumption $\Gamma\in \mathfrak{H}^s$ means that 
 $$\int_{\R^3}\int_{\R^3}(1+|p|^2)^s(1+|q|^2)^s\|\hat{\Gamma}(p,q)\|^2_{\C^{4\times 4}}\,dp\,dq<+\ii,$$
 where $\hat{\Gamma}$ denotes the integral kernel of $\cF\Gamma\cF^*$ (and $\cF$ is the Fourier transform on $L^2$). 
 This implies that $\hat{\Gamma}\in L^1(\R^3\times\R^3, \C^{4\times 4})$ and hence $\Gamma(\cdot,\cdot)\in \mathcal{C}(\R^3\times\R^3, \C^{4\times 4})$. Furthermore, we have for all $x\in\R^3$,
 $$\Gamma(x,x)=\frac{1}{(2\pi)^3}\int_{\R^3}\int_{\R^3}\hat{\Gamma}(p,q)e^{i(p-q)\cdot x}\,dp\,dq,$$
 hence by the Plancherel identity we deduce that 
 $$\|\Gamma(x,x)\|_{H^s}^2 = \frac{1}{(2\pi)^3}\int_{\R^3} (1+|k|^2)^s \left\|\int_{\R^3}\hat{\Gamma}(p,k-p)\,dp\right\|^2_{\C^{4\times 4}}\,dk.$$
 In view of the bound
 \begin{equation}\label{eq:s-s'-bound}
\int_{\R^3}\frac{dp}{(1+|p|^2)^{s'}(1+|p-k|^2)^s + (1+|p|^2)^s(1+|p-k|^2)^{s'}}\leq
\frac{c_{\eqref{eq:s-s'-bound}}}{(1+|k|^2)^s},  
 \end{equation}
 which holds for $s\geq s'>\tfrac32$ (by decomposition of the integral into parts with $|p-k|\lessgtr\tfrac12 |k|$),
the Cauchy-Schwarz inequality implies that
\begin{equation}
 \|\Gamma(x,x)\|_{H^s}\leq \frac{\sqrt{c_{\eqref{eq:s-s'-bound}}}}{(2\pi)^{3/2}}\left(\|(1-\Delta)^{s'/2}\Gamma(1-\Delta)^{s/2}\|_{\gS^2} + \|(1-\Delta)^{s/2}\Gamma(1-\Delta)^{s'/2}\|_{\gS^2}\right).
\end{equation}
Taking $s'=s$, this proves \eqref{eq:est-density-s}.
Now for $s'<s$ and $\Gamma\geq 0$ we have by the H\"older inequality in Schatten spaces
\begin{align}
 \|(1-\Delta)^{s/2}\Gamma(1-\Delta)^{s'/2}\|_{\gS^2}^2
 &= \|(1-\Delta)^{s/2}\sqrt{\Gamma}\sqrt{\Gamma}(1-\Delta)^{s'/2}\|_{\gS^2}^2 \notag\\
 &\leq \|(1-\Delta)^{s/2}\sqrt{\Gamma}\|_{\gS^4}^2\|\sqrt{\Gamma}(1-\Delta)^{s'/2}\|_{\gS^4}^2  \notag\\
 &= \|\Gamma\|_{\mathfrak{H}^s}\|\Gamma\|_{\mathfrak{H}^{s'}},\label{eq:Gamma-mixed-bound}
\end{align}
and this completes the proof.

\end{proof}

\begin{remark}
 The above lemma shows that the restriction of $(x,y)\mapsto\Gamma(x,y)$ to the diagonal $\{y=x\}$ belongs to $H^s$ if $\Gamma\in\gH^s$. Notice here that one does not lose derivatives as in the Sobolev trace theorem, the reason being that $(x,y)\mapsto \Gamma(x,y)$ actually belongs to $H^s(\R^3)\otimes H^s(\R^3)$ which is smaller than $H^s(\R^3\times\R^3)$ (in Fourier variables, the former means that $(1+|p|^2)^{s/2}(1+|q|^2)^{s/2}\hat{\Gamma}(p,q)\in L^2$ while the latter means that $(1+|p|^2+|q|^2)^{s/2}\hat{\Gamma}(p,q)\in L^2$). Furthermore, the non-negativity assumption is necessary for \eqref{eq:est-density}: if it were true for all $\Gamma\in\gH^s$, one could take 
 $\hat{\Gamma}(p,q)=f\left(\frac{p}{\epsilon}\right)f\left(\frac{q-\xi_0}{\epsilon}\right)+f\left(\frac{q}{\epsilon}\right)f\left(\frac{p-\xi_0}{\epsilon}\right)$
 with $f\in \mathcal{C}^\ii_0(\R^3)$, $\xi_0\neq0$ and $\epsilon>0$ (the resulting $\Gamma$ is even self-adjoint), and as $\epsilon\to0$ one would deduce from \eqref{eq:est-density} that 
 $$(1+|\xi_0|^2)^s + (1+|\xi_0|^2)^{s'} \leq C(1+|\xi_0|^2)^{\frac{s+s'}{2}},$$
 which is wrong if $s'<s$ as $|\xi_0|\to+\ii$.
\end{remark}

With these preparations we can now give a precise meaning to the equations involving the operator $\Gamma$. If $I\subset\R$ is an open time interval containing $0$, $s>\tfrac32$, and $F\in \cC(I,H^s)$, we say that $\Gamma\in \cC(I,\gH^s)$ is a solution to the equation 
\begin{equation}\label{eq:gamma-example}
i\partial_t\Gamma=[D+F(t),\Gamma]
\end{equation}
if we have 
$$i\partial_t \langle f,\Gamma(t) g\rangle = \langle Df,\Gamma(t) g\rangle - \langle f,\Gamma(t) Dg\rangle + \langle f,[F(t),\Gamma(t)]g\rangle
$$
in the sense of distributions $\cD'(I)$ for all $f,g\in L^2(\R^3,\C^4)$. Notice that the assumption on $F$ shows that the multiplication operator by $F(t)$ is bounded on $L^2(\R^3,\C^4)$ for any $t\in I$. By standard arguments,  $\Gamma\in \cC(I,\gH^s)$ is a solution to \eqref{eq:gamma-example} in this sense if and only if it solves the integral equation
\begin{equation}
 \Gamma(t)= e^{-itD} \Gamma_{\mathrm{in}} e^{itD} - i\int_0^te^{-i(t-t')D}\big[F(t'),\Gamma(t')\big]e^{i(t-t')D}\,\d t'.
\end{equation}
where the integral is taken in the weak sense (i.e. $\langle f,\int_0^t B(t')\,\d t' g\rangle := \int_0^t \langle f,B(t') g\rangle \d t'$ for all $f,g\in L^2$). We will consider this integral equation with $F$ given either in terms of $(S,\omega)$ in \eqref{eq:dkg-mb} or in terms of $\Gamma$ in \eqref{eq:dnl-mb} (the corresponding $F$ belongs to $\cC(I,H^s)$ in both cases).

To prove the analog of Proposition \ref{prop:exist} for this kind of integral equation, one needs to bound the commutators or products of $\Gamma$ with multiplication operators. The required bounds in our setting are provided by the following lemma.

\begin{lemma}\label{lem:K-P-mb}
 Let $s>\tfrac32$. For all $\Gamma\in \mathfrak{H}^s$ and $f\in H^s(\R^3, \C)$, the products $\Gamma f$ and $f\Gamma$ define elements of $ \mathfrak{H}^s$. Moreover, for every $\tfrac32<s'\leq s$ there exists a constant  $c_{\eqref{eq:K-P-mb}}>0$ so that, for all $\Gamma\in \mathfrak{H}^s$ satisfying additionally $\Gamma \geq 0$ if $s'<s$ and all $f\in H^s(\R^3, \C)$, we have
 \begin{equation}\label{eq:K-P-mb}
  \begin{aligned}
  \| f\Gamma \|_{\mathfrak{H}^s} + \| \Gamma f\|_{\mathfrak{H}^s} &\leq c_{\eqref{eq:K-P-mb}} \big(\|f\|_{H^s} \|\Gamma\|_{\mathfrak{H}^s}^{1/2} \|\Gamma\|_{\mathfrak{H}^{s'}}^{1/2} + \|f\|_{L^\infty} \|\Gamma\|_{\mathfrak{H}^s} \big).
  \end{aligned}
 \end{equation}
\end{lemma}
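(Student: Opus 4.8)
The idea is to prove a Kato–Ponce type inequality at the level of density matrices by writing $\|f\Gamma\|_{\gH^s}^2 = \|(1-\Delta)^{s/2}(f\Gamma)(1-\Delta)^{s/2}\|_{\gS^2}^2$ and commuting the weight $(1-\Delta)^{s/2}$ through the multiplication operator $f$. Concretely, I would first observe that $(1-\Delta)^{s/2} f (1-\Delta)^{-s/2}$ and its adjoint-type variants are bounded maps, but with a loss: the usual scalar Kato–Ponce inequality \eqref{eq:K-P} tells us that $u\mapsto fu$ maps $H^s\to H^s$ with norm controlled by $\|f\|_{L^\infty}$ plus a term $\|f\|_{H^s}$ hitting a \emph{lower} Sobolev norm of $u$. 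The plan is to exploit exactly this structure: for the operator $\Gamma$, the ``low norm'' of $u$ becomes a mixed Schatten–Sobolev norm of $\Gamma$ of the form $\|(1-\Delta)^{s/2}\Gamma(1-\Delta)^{\sigma/2}\|_{\gS^2}$ with $\sigma<s$, which by the Hölder-in-Schatten trick already used in \eqref{eq:Gamma-mixed-bound} (valid precisely when $\Gamma\geq0$) is bounded by $\|\Gamma\|_{\gH^s}^{1/2}\|\Gamma\|_{\gH^{s'}}^{1/2}$.

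**Main steps.** First, reduce to estimating $f\Gamma$ (the bound for $\Gamma f$ follows by taking adjoints, since $(\Gamma f)^* = \bar f\,\Gamma^*$ and the $\gH^s$-norm is adjoint-invariant, and $\|\bar f\|_{H^s}=\|f\|_{H^s}$). Second, write
\[
(1-\Delta)^{s/2}(f\Gamma)(1-\Delta)^{s/2} = \big[(1-\Delta)^{s/2} f (1-\Delta)^{-s/2}\big]\cdot\big[(1-\Delta)^{s/2}\Gamma(1-\Delta)^{s/2}\big],
\]
and estimate the $\gS^2$-norm of the product by the operator norm of the first factor times the $\gS^2$-norm of the second. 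Third — and this is where the scalar Kato–Ponce inequality enters — decompose the commutator $(1-\Delta)^{s/2} f (1-\Delta)^{-s/2} = f + \big[(1-\Delta)^{s/2}, f\big](1-\Delta)^{-s/2}$. The first piece contributes $\|f\|_{L^\infty}\|\Gamma\|_{\gH^s}$. For the commutator piece, the Kato–Ponce/Coifman–Meyer commutator estimate gives that $\big[(1-\Delta)^{s/2},f\big](1-\Delta)^{-s/2+\epsilon}$ is bounded on $L^2$ with norm $\lesssim\|f\|_{H^s}$ for a suitable $\epsilon>0$ (so that only a norm $H^{s-\epsilon}$ of the remaining factor of $\Gamma$ is needed), hence this term is controlled by $\|f\|_{H^s}\,\|(1-\Delta)^{s/2}\Gamma(1-\Delta)^{(s-2\epsilon)/2}\|_{\gS^2}$. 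Fourth, invoke the Hölder-in-Schatten argument of \eqref{eq:Gamma-mixed-bound}: for $\Gamma\geq0$, $\|(1-\Delta)^{s/2}\Gamma(1-\Delta)^{\sigma/2}\|_{\gS^2}=\|(1-\Delta)^{s/2}\sqrt\Gamma\|_{\gS^4}\|\sqrt\Gamma(1-\Delta)^{\sigma/2}\|_{\gS^4}=\|\Gamma\|_{\gH^s}^{1/2}\|\Gamma\|_{\gH^\sigma}^{1/2}$, and for $\sigma=s-2\epsilon<s$ this is $\leq\|\Gamma\|_{\gH^s}^{1/2}\|\Gamma\|_{\gH^{s'}}^{1/2}$ up to the embedding $\gH^{s'}\hookrightarrow\gH^{s-2\epsilon}$ provided $\epsilon$ is chosen so that $s-2\epsilon\geq s'$. (When $s'=s$ one does not need the positivity: one simply keeps $\sigma=s$ throughout and the two mixed norms both equal $\|\Gamma\|_{\gH^s}$ by the interpolation/Hölder bound already noted in the functional-setting discussion.) Finally, combine the two contributions to get \eqref{eq:K-P-mb}, and note that the same bound shows $f\Gamma,\Gamma f\in\gH^s$ whenever $f\in H^s$ and $\Gamma\in\gH^s$, so in particular the $s'=s$ case (no positivity) already yields the qualitative membership claim for arbitrary $\Gamma\in\gH^s$.

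**Main obstacle.** The delicate point is the commutator estimate $\big\|\big[(1-\Delta)^{s/2},f\big](1-\Delta)^{-\sigma/2}\big\|_{L^2\to L^2}\lesssim\|f\|_{H^s}$ with $\sigma<s$ strictly — this is exactly the ``fractional Leibniz with a gained derivative'' and needs $s>\tfrac32$ so that $H^s\hookrightarrow L^\infty$ controls the low-frequency part of $f$ and a Coifman–Meyer paraproduct argument controls the high-high and high-low interactions; one must be careful that the derivative count leaves room, i.e. that one can take $\sigma\in[s',s)$, which is where the hypothesis $s'>\tfrac32$ (not merely $s>\tfrac32$) is used. One also has to check that all manipulations are legitimate on the dense set where $\Gamma$ is smoothing and $f$ is Schwartz, then pass to the limit; this is routine given the a priori bound. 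I expect the paper will simply cite the scalar Kato–Ponce inequality \eqref{eq:K-P} and the Hölder-in-Schatten identity and assemble the pieces, the whole proof being short.
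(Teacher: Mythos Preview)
Your overall plan—isolate an $\|f\|_{L^\infty}\|\Gamma\|_{\gH^s}$ contribution and control the rest through a mixed norm $\|(1-\Delta)^{\sigma/2}\Gamma(1-\Delta)^{s/2}\|_{\gS^2}$ via the H\"older-in-Schatten identity~\eqref{eq:Gamma-mixed-bound}—matches the paper's, but the commutator route you take to get there has a real gap. The operator bound $\big\|\big[(1-\Delta)^{s/2},f\big](1-\Delta)^{-\sigma/2}\big\|_{L^2\to L^2}\lesssim\|f\|_{H^s}$ cannot hold for all $\sigma>\tfrac32$: the commutator $[(1-\Delta)^{s/2},f]$ gains at most one derivative (this is sharp—test with low-frequency $f$ against a high-frequency input), so one needs $\sigma\geq s-1$. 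On the other hand, bounding $\|\Gamma\|_{\gH^\sigma}^{1/2}$ by $\|\Gamma\|_{\gH^{s'}}^{1/2}$ requires $\sigma\leq s'$ (your stated condition ``$s-2\epsilon\geq s'$'' has the inequality backwards; the embedding $\gH^{s'}\hookrightarrow\gH^{s-2\epsilon}$ holds precisely when $s-2\epsilon\leq s'$). Together these force $s'\geq s-1$, which is not assumed: the lemma must cover, for instance, $s=10$ and $s'=2$. A secondary issue is that the standard Kato--Ponce commutator estimate carries a $\|\nabla f\|_{L^\infty}$ factor, which is not controlled by $\|f\|_{H^s}$ in the range $\tfrac32<s\leq\tfrac52$.

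The paper sidesteps all of this by working at the \emph{kernel} level rather than the operator level. Since the $\gS^2$-norm is the $L^2$-norm of the kernel and the kernel of $f\Gamma$ is $f(x)\Gamma(x,y)$, one applies the \emph{product} Kato--Ponce inequality~\eqref{eq:K-P} in the $x$-variable for each fixed $y$:
\[
\|f\Gamma\|_{\gH^s}^2=\int_{\R^3}\big\|(1-\Delta_x)^{s/2}\big(f(x)\,(1-\Delta_y)^{s/2}\Gamma(x,y)\big)\big\|_{L^2_x}^2\d y
\]
splits directly into the two desired pieces, $\|f\|_{L^\infty}\|\Gamma\|_{\gH^s}$ and $\|f\|_{H^s}\,\|(1-\Delta_y)^{s/2}\Gamma(x,y)\|_{L^2_yL^\infty_x}$. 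The $L^\infty_x$ factor is then bounded by the $H^{s'}_x$ norm via Sobolev embedding (the only place $s'>\tfrac32$ enters), giving exactly the mixed Schatten norm that~\eqref{eq:Gamma-mixed-bound} handles. No commutator estimate is needed, and the argument works for every $s'\in(\tfrac32,s]$ without any lower constraint of the form $s'\geq s-1$.
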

\begin{proof}
Since $H^s$ is an algebra for $s>\tfrac32$ it is clear that mulitplication by $f$ is a bounded operator on $H^s$ and $H^{-s}$, so $f\Gamma, \Gamma f \in \mathfrak{H}^s$.
 The norm of the operator $\Gamma\in \mathfrak{H}^s$ is equal to the norm of the kernel $\|(1-\Delta_y)^{s/2}(1-\Delta_x)^{s/2}\Gamma(x,y)\|_{L^2(\R^6)}$ (since the Hilbert-Schmidt norm is the $L^2$-norm of the kernel). The kernel of the operator $f\Gamma$ is $f(x)\Gamma(x,y)$, so by the Kato-Ponce inequality~\eqref{eq:K-P}
 \begin{align*}
  \| f\Gamma \|_{\mathfrak{H}^s}^2 & =  \int_{\R^3} \|(1-\Delta_x)^{s/2}f(x)(1-\Delta_y)^{s/2}\Gamma(x,y)\|_{L^2_x}^2 \d y\\
  &\leq 2c_{\eqref{eq:K-P}}^2  \int_{\R^3}\|f\|_{H^s}^2 \|(1-\Delta_y)^{s/2}\Gamma(x,y)\|_{L^\infty_x}^2 \d y \\
  &\qquad + 2c_{\eqref{eq:K-P}}^2  \int_{\R^3}\|f\|_{L^\infty}^2\|(1-\Delta_x)^{s/2}(1-\Delta_y)^{s/2}\Gamma(x,y)\|_{L^2_x}^2 \d y \\
  &\leq 2c_{\eqref{eq:K-P}}^2 \big(\|f\|_{H^s} \|(1-\Delta_y)^{s/2}\Gamma(x,y)\|_{L^2_yL^\infty_x} + \|f\|_{L^\infty} \|\Gamma\|_{\mathfrak{H}^s} \big)^2.
 \end{align*}
 By Sobolev embedding $H^{s'}\hookrightarrow L^\infty$ (and the inequality~\eqref{eq:Gamma-mixed-bound} for $s'<s$) we obtain
 \begin{equation*}
  \|(1-\Delta_y)^{s/2}\Gamma(x,y)\|_{L^2_yL^\infty_x} \leq c \|(1-\Delta_x)^{s'/2}(1-\Delta_y)^{s/2}\Gamma(x,y)\|_{L^2_yL^2_x} \leq c \|\Gamma\|_{\mathfrak{H}^s}^{1/2} \|\Gamma\|_{\mathfrak{H}^{s'}}^{1/2}.
 \end{equation*}
The bound for $\Gamma f$ is the same up to exchange of $x, y$ in the proof, and this yields~\eqref{eq:K-P-mb}.

\end{proof}

The inequality $\eqref{eq:est-density-s}$ plays the same role in the many-body setting as the fact that $H^s$ is a normed algebra for $s>\tfrac32$ in the one-body case. The bound~\eqref{eq:K-P-mb} replaces the Kato-Ponce inequality in the many-body case (in particular, the presence of $\|\Gamma\|_{\gH^{s'}}^{1/2}$ in \eqref{eq:K-P-mb} and \eqref{eq:est-density} will play a crucial role below). 

\subsection{Well-posedness}

The integral formulations on a time interval $I$ containing $0$ of the various equations we are working on are the following. Let $s>\tfrac32$. For the many-body Dirac-Klein-Gordon system, $(\Gamma,S,\omega)\in\cC(I,\gH^s\times H^s)$ is a solution to \eqref{eq:dkg-mb} with the same initial conditions as in Theorem \ref{thm.main-mb} if and only if for all $t\in I$,
 \begin{equation}
  \begin{cases}\label{eq:dkg-mb-int}
\dps\Gamma(t)= e^{-itD} \Gamma_{\mathrm{in}} e^{itD} - i\int_0^te^{-i(t-t')D}\big[({ \beta}S +  V - {\bm \alpha}\cdot {\bm \omega})(t'),\Gamma(t')\big]e^{i(t-t')D}\,\d t'\,,
\\
\\
 \dps S(t)= 
\cos\left(t\sqrt{-\Delta + m^2_\sigma}\right) S_\mathrm{in}
+ \frac{\sin\left(t\sqrt{-\Delta + m^2_\sigma}\right)}{\sqrt{-\Delta + m^2_\sigma}}\dot{S}_\mathrm{in} 
\\\dps\qquad\qquad\qquad 
-g_\sigma^2\int_0^t\frac{\sin\left((t-t')\sqrt{-\Delta + m^2_\sigma}\right)}{\sqrt{-\Delta + m^2_\sigma}}\rho_s(\Gamma(t'))\d t',\\
\\
 \dps\omega(t)= 
\cos\left(t\sqrt{-\Delta + m^2_\omega}\right)\omega_\mathrm{in}
+ \frac{\sin\left(t\sqrt{-\Delta + m^2_\omega}\right)}{\sqrt{-\Delta + m^2_\omega}}\dot{\omega}_\mathrm{in}
\\\dps\qquad\qquad\qquad 
+g_\omega^2\int_0^t\frac{\sin\left((t-t')\sqrt{-\Delta + m^2_\omega}\right)}{\sqrt{-\Delta + m^2_\omega}}J(\Gamma(t')) \d t'.\\
\end{cases}
 \end{equation}
 For the many-body nonlinear Dirac equation, $\Gamma\in\cC(I,\gH^s)$ solves \eqref{eq:dnl-mb} if and only if for all $t\in I$
\begin{multline}\label{eq:dnl-mb-int}
 \Gamma(t)= e^{-itD} \Gamma_{\mathrm{in}} e^{itD} \\
 - i\int_0^te^{-i(t-t')D}\big[(- { \beta}\gamma_\sigma \rho_s(\Gamma) +  \gamma_\omega \rho_v(\Gamma) -  \gamma_\omega {\bm \alpha}\cdot {\bm J}(\Gamma))(t'),\Gamma(t')\big]e^{i(t-t')D}\,\d t'.
\end{multline}

From Lemma \ref{lem:est-density} and Lemma \ref{lem:K-P-mb} in the case $s'=s$, one can use a  straightforward fixed point argument on the integral formulations \eqref{eq:dkg-mb-int} and \eqref{eq:dnl-mb-int} to obtain the following analog of the existence of maximal solutions in Proposition \ref{prop:exist} (similar arguments can be found in \cite[Thm. 3]{LewSab-13a}). 

\begin{proposition}\label{prop:exist-mb}
 Let $s>\tfrac32$, $\Gamma_\mathrm{in}\in \mathfrak{H}^s$ be a non-negative operator, $(S_\mathrm{in},\dot{S}_\mathrm{in})\in H^s(\R^3, \R)\times H^{s-1}(\R^3, \R)$, and $(\omega_\mathrm{in},\dot{\omega}_\mathrm{in})\in H^s(\R^3, \R^4)\times H^{s-1}(\R^3, \R^4)$.
  \begin{enumerate}[label=(\roman*)]
  \item\label{pt1.prop-mb} There exist $T_{\mathrm{min}}^{\mathrm{nl}}, T_{\mathrm{max}}^{\mathrm{nl}}\in(0,+\infty]$ and a unique maximal solution 
\[
\Gamma_{\mathrm{nl}}\in\mathcal{C}((-T_{\mathrm{min}}^{\mathrm{nl}}, T_{\mathrm{max}}^{\mathrm{nl}}),\gH^s)\,,
\]
 to \eqref{eq:dnl-mb-int}. 
\item\label{pt2.prop-mb}  
For all $m_\sigma, m_\omega, g_\sigma, g_\omega$ there exist $T_{\mathrm{min}}, T_{\mathrm{max}}\in(0,+\infty]$ and a unique maximal solution 
\[
	(\Gamma,S,\omega)\in \mathcal{C}((-T_{\mathrm{min}}, T_{\mathrm{max}}),\gH^s\times H^s(\mathbb{R}^3,\R)\times H^s(\mathbb{R}^3, \R^4) )\,,
\] 
to \eqref{eq:dkg-mb-int}. 
 \end{enumerate}

\end{proposition}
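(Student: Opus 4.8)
The plan is to run a Banach fixed-point argument directly on the integral formulations \eqref{eq:dnl-mb-int} and \eqref{eq:dkg-mb-int}, following the same lines as Proposition \ref{prop:exist}, the only real change being that products and commutators of the operator $\Gamma$ with multiplication operators must now be estimated in $\gH^s$ rather than in $H^s$. I would first record two elementary structural facts that neutralise the free evolutions. Since $e^{-itD}$ commutes with $(1-\Delta)^{s/2}$ and is unitary on $L^2$, the conjugation $A\mapsto e^{-itD}Ae^{itD}$ is an isometry of $\gH^s$ for every $t$, so the term $e^{-itD}\Gamma_\mathrm{in}e^{itD}$ is an $\gH^s$-valued curve of constant norm. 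Likewise, $\cos(t\sqrt{-\Delta+m^2})$ is a contraction on $H^s$ and $(-\Delta+m^2)^{-1/2}\sin(t\sqrt{-\Delta+m^2})$ is bounded from $H^{s-1}$ to $H^s$, both with norm independent of $t$ (for the fixed masses $m_\sigma,m_\omega$), so the data-dependent parts of the field equations in \eqref{eq:dkg-mb-int} are continuous $H^s$-valued curves controlled by $\|(S_\mathrm{in},\dot S_\mathrm{in})\|_{H^s\times H^{s-1}}$ and $\|(\omega_\mathrm{in},\dot\omega_\mathrm{in})\|_{H^s\times H^{s-1}}$.

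Next I would estimate the nonlinear terms using the two lemmas with $s'=s$, for which no positivity hypothesis is needed. By \eqref{eq:est-density-s}, each of $\rho_s(\Gamma),\rho_v(\Gamma),\bm J(\Gamma)$ belongs to $H^s$ with norm $\le c\|\Gamma\|_{\gH^s}$; since $\beta$ and $\bm\alpha$ are constant Hermitian matrices, the self-interaction potential $F_\mathrm{nl}(\Gamma):=-\beta\gamma_\sigma\rho_s(\Gamma)+\gamma_\omega\rho_v(\Gamma)-\gamma_\omega\bm\alpha\cdot\bm J(\Gamma)$ then has $H^s$-norm $\le c\|\Gamma\|_{\gH^s}$, and the system potential $F:=\beta S+V-\bm\alpha\cdot\bm\omega$ has $H^s$-norm $\le c\|(S,\omega)\|_{H^s}$. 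Inserting these into \eqref{eq:K-P-mb} with $s'=s$ gives $\|[F_\mathrm{nl}(\Gamma),\Gamma]\|_{\gH^s}\le c\|\Gamma\|_{\gH^s}^2$ and $\|[F,\Gamma]\|_{\gH^s}\le c\|(S,\omega)\|_{H^s}\|\Gamma\|_{\gH^s}$, together with the corresponding Lipschitz bounds for differences of two inputs (the map $\Gamma\mapsto[F_\mathrm{nl}(\Gamma),\Gamma]$ is quadratic, and the coupling in \eqref{eq:dkg-mb-int} is bilinear because the densities depend linearly on $\Gamma$). Combining these bounds with the structural facts above, the map defined by the right-hand side of \eqref{eq:dnl-mb-int} (resp. of \eqref{eq:dkg-mb-int}) sends a closed ball of $\cC([-\tau,\tau],\gH^s)$ (resp. of $\cC([-\tau,\tau],\gH^s\times H^s\times H^s)$) into itself and is a contraction there, provided $\tau>0$ is small enough depending only on $\|\Gamma_\mathrm{in}\|_{\gH^s}$ (resp. also on $\|(S_\mathrm{in},\dot S_\mathrm{in},\omega_\mathrm{in},\dot\omega_\mathrm{in})\|$, $g_\sigma,g_\omega,m_\sigma,m_\omega$). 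Its unique fixed point is the local solution, and uniqueness in the whole space follows from the same Lipschitz estimates.

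Since the lifespan produced by this argument depends only on the norm of the data, the standard continuation procedure then yields a unique maximal solution on an open interval $(-T_\mathrm{min},T_\mathrm{max})$ with $T_\mathrm{min},T_\mathrm{max}\in(0,+\infty]$, which is the assertion. I would finish with a remark that the hypothesis $\Gamma_\mathrm{in}\ge0$ is in fact not used above (only the $s'=s$ forms of Lemmas \ref{lem:est-density} and \ref{lem:K-P-mb} enter), but that non-negativity is propagated by the flow: for real fields the potential $F(t)$, resp. $F_\mathrm{nl}(\Gamma(t))$ along a solution, is bounded, self-adjoint and strongly continuous in $t$, hence $D+F(t)$ generates a strongly continuous unitary propagator $U(t)$, and by uniqueness $\Gamma(t)=U(t)\Gamma_\mathrm{in}U(t)^*$, so $\Gamma(t)\ge0$ whenever $\Gamma_\mathrm{in}\ge0$. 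I do not expect a genuine obstacle here once Lemmas \ref{lem:est-density} and \ref{lem:K-P-mb} are in hand; the only point that deserves care is that every product and commutator estimate must be carried out in the operator norm $\|\cdot\|_{\gH^s}$ --- precisely what those two lemmas are designed for --- rather than in $H^s$ as in the one-body case.
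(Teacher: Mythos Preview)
Your proposal is correct and follows precisely the approach the paper indicates: a Banach fixed-point argument on the integral formulations \eqref{eq:dnl-mb-int} and \eqref{eq:dkg-mb-int}, using Lemmas~\ref{lem:est-density} and~\ref{lem:K-P-mb} with $s'=s$ (so that no positivity is needed) to control the densities and the commutators in $\gH^s$. Your additional remark that non-negativity is unused here but is propagated by the unitary propagator is also exactly what the paper records, though it defers that observation to the proof of the blow-up criterion (Lemma~\ref{lem:blowup-crit-mb}).
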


Notice that compared to Proposition \ref{prop:exist}, we did not give a blow-up criterion in the many-body case. This is because this part is new and thus we provide it in a separate statement.

\begin{lemma}\label{lem:blowup-crit-mb}
 Let $s>\tfrac32$, $\Gamma_\mathrm{in}\in \mathfrak{H}^s$ be a non-negative operator, $(S_\mathrm{in},\dot{S}_\mathrm{in})\in H^s(\R^3, \R)\times H^{s-1}(\R^3, \R)$, and $(\omega_\mathrm{in},\dot{\omega}_\mathrm{in})\in H^s(\R^3, \R^4)\times H^{s-1}(\R^3, \R^4)$. Let $\Gamma_{\mathrm{nl}}$ and $(\Gamma,S,\omega)$ the unique maximal solutions to \eqref{eq:dnl-mb-int} and \eqref{eq:dkg-mb-int} given by Proposition \ref{prop:exist-mb}. 
 \begin{enumerate}[label=(\roman*)]
  \item If $T_{\rm max/min}^{\mathrm{nl}}<+\infty$ then for all $s'\in\left(\tfrac32,s\right]$ we have
\[
\limsup_{t\to T_{\rm max/min}^{\mathrm{nl}}}\|\Gamma_{\mathrm{nl}}(t)\|_{\gH^{s'}}= +\infty\,.
\] 
\item If $T_{\rm max/min}<+\infty$ then for all $s'\in\left(\tfrac32,s\right]$ we have
\begin{equation}
	\limsup_{t\to T_{\rm max/min}}\|(\Gamma,S,\omega)(t)\|_{\gH^{s'}\times H^{s'}} 
	= +\infty\,. \label{eq:blowup-crit-mb}
\end{equation}
 \end{enumerate}
\end{lemma}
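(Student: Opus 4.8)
The plan is a continuation (blow-up criterion) argument: assume a maximal time is finite while the low-regularity norm stays bounded, propagate the full $\gH^s$ (resp. $\gH^s\times H^s\times H^{s-1}$) regularity, and conclude via the local well-posedness underlying Proposition~\ref{prop:exist-mb} (the existence time in the underlying fixed-point argument depending only on the norm of the data in that space) that the solution can be continued, a contradiction. Since $\|\cdot\|_{\gH^{s'}}\le\|\cdot\|_{\gH^s}$ and $\|\cdot\|_{H^{s'}}\le\|\cdot\|_{H^s}$ for $s'\le s$, it suffices to prove the statement for $s'\in(\tfrac32,s)$, the endpoint $s'=s$ following at once; likewise I treat only $T_{\mathrm{max}}$, the case of $T_{\mathrm{min}}$ being identical after time reversal. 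Two structural facts are used throughout. First, $(1-\Delta)^{s/2}$ is a scalar Fourier multiplier and therefore commutes with the matrix-valued multiplier $e^{-itD}$ (since $D^2=-\Delta+m^2$), which is unitary on $L^2(\R^3,\C^4)$; hence the free flow $\Gamma\mapsto e^{-itD}\Gamma e^{itD}$, and the propagator inside the Duhamel terms, are isometries of $\gH^s$ and of $\gH^{s'}$. Second, both \eqref{eq:dkg-mb} and \eqref{eq:dnl-mb} are of the form $i\partial_t\Gamma=[H(t),\Gamma]$ with $H(t)$ self-adjoint (the densities are real-valued on self-adjoint operators), whence $\Gamma(t)=U(t)\Gamma_{\mathrm{in}}U(t)^*$ for a unitary $U(t)$, so $\Gamma(t)\ge0$ for all $t$ and the refined inequalities \eqref{eq:est-density} and \eqref{eq:K-P-mb} apply at every time. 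Finally, the multiplication operators in the commutators are matrix-valued, but since $\beta,\alpha_k$ are constant matrices of operator norm $1$ commuting with $(1-\Delta)^{s/2}$ and with the scalar densities, the relevant bounds reduce to the scalar case of Lemma~\ref{lem:K-P-mb}.

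\textit{Part (i).} Fix $s'\in(\tfrac32,s)$ and suppose $T_{\mathrm{max}}^{\mathrm{nl}}<+\infty$ with $A:=\sup_{[0,T_{\mathrm{max}}^{\mathrm{nl}})}\|\Gamma_{\mathrm{nl}}(t)\|_{\gH^{s'}}<+\infty$. From \eqref{eq:dnl-mb-int} and the isometry property, $\|\Gamma_{\mathrm{nl}}(t)\|_{\gH^s}\le\|\Gamma_{\mathrm{in}}\|_{\gH^s}+\int_0^t\|[F(\Gamma_{\mathrm{nl}})(t'),\Gamma_{\mathrm{nl}}(t')]\|_{\gH^s}\,\d t'$, where $F(\Gamma)=-\beta\gamma_\sigma\rho_s(\Gamma)+\gamma_\omega\rho_v(\Gamma)-\gamma_\omega\bm\alpha\cdot\bm J(\Gamma)$. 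Lemma~\ref{lem:K-P-mb} at exponent $s'$ (using $\Gamma_{\mathrm{nl}}(t')\ge0$) gives $\|[F,\Gamma_{\mathrm{nl}}]\|_{\gH^s}\lesssim\|F\|_{H^s}\|\Gamma_{\mathrm{nl}}\|_{\gH^s}^{1/2}\|\Gamma_{\mathrm{nl}}\|_{\gH^{s'}}^{1/2}+\|F\|_{L^\infty}\|\Gamma_{\mathrm{nl}}\|_{\gH^s}$; by the refined density bound \eqref{eq:est-density}, $\|F\|_{H^s}\lesssim\|\Gamma_{\mathrm{nl}}\|_{\gH^s}^{1/2}\|\Gamma_{\mathrm{nl}}\|_{\gH^{s'}}^{1/2}$, and by \eqref{eq:est-density-s} at exponent $s'$ together with $H^{s'}\hookrightarrow L^\infty$, $\|F\|_{L^\infty}\lesssim\|\Gamma_{\mathrm{nl}}\|_{\gH^{s'}}$. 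Hence $\|[F,\Gamma_{\mathrm{nl}}]\|_{\gH^s}\lesssim\|\Gamma_{\mathrm{nl}}\|_{\gH^{s'}}\|\Gamma_{\mathrm{nl}}\|_{\gH^s}\le A\|\Gamma_{\mathrm{nl}}\|_{\gH^s}$, and Gronwall's lemma bounds $\sup_{[0,T_{\mathrm{max}}^{\mathrm{nl}})}\|\Gamma_{\mathrm{nl}}(t)\|_{\gH^s}$, contradicting the maximality of $T_{\mathrm{max}}^{\mathrm{nl}}$.

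\textit{Part (ii).} Fix $s'\in(\tfrac32,s)$ and suppose $T_{\mathrm{max}}<+\infty$ with $A:=\sup_{[0,T_{\mathrm{max}})}\|(\Gamma,S,\omega)(t)\|_{\gH^{s'}\times H^{s'}}<+\infty$. Duhamel's formula (and its time derivative) applied to the Klein-Gordon equations in \eqref{eq:dkg-mb-int}, together with the boundedness on Sobolev spaces of $\cos(t\sqrt{-\Delta+m^2})$, $\tfrac{\sin(t\sqrt{-\Delta+m^2})}{\sqrt{-\Delta+m^2}}$ and $\sqrt{-\Delta+m^2}\,\sin(t\sqrt{-\Delta+m^2})$ with constants depending only on the fixed masses, yields $\|(S,\omega)(t)\|_{H^s}+\|(\dot S,\dot\omega)(t)\|_{H^{s-1}}\lesssim 1+\int_0^t(\|\rho_s(\Gamma(t'))\|_{H^{s-1}}+\|J(\Gamma(t'))\|_{H^{s-1}})\,\d t'$. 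The naive bound $\|\rho_s(\Gamma)\|_{H^{s-1}}\lesssim\|\Gamma\|_{\gH^s}$, fed into the $\Gamma$-equation through $W=\beta S+V-\bm\alpha\cdot\bm\omega$ and Lemma~\ref{lem:K-P-mb}, would produce a quadratic differential inequality for $\int_0^t\|\Gamma\|_{\gH^s}$ and could blow up in finite time; instead one exploits that $\Gamma(t')\ge0$ and $\|\Gamma(t')\|_{\gH^{s'}}\le A$, so by \eqref{eq:est-density}, $\|\rho_s(\Gamma(t'))\|_{H^{s-1}}+\|J(\Gamma(t'))\|_{H^{s-1}}\lesssim A^{1/2}\|\Gamma(t')\|_{\gH^s}^{1/2}$, hence
\[
\|(S,\omega)(t)\|_{H^s}+\|(\dot S,\dot\omega)(t)\|_{H^{s-1}}\le C_1\Big(1+\int_0^t\|\Gamma(t')\|_{\gH^s}^{1/2}\,\d t'\Big).
\]
On the other hand, from \eqref{eq:dkg-mb-int}, the isometry property, Lemma~\ref{lem:K-P-mb} at exponent $s'$, and $\|W(t')\|_{L^\infty}\lesssim\|(S,\omega)(t')\|_{H^{s'}}\le A$, one gets $\|\Gamma(t)\|_{\gH^s}\le\|\Gamma_{\mathrm{in}}\|_{\gH^s}+c\int_0^t(A^{1/2}\|(S,\omega)(t')\|_{H^s}\|\Gamma(t')\|_{\gH^s}^{1/2}+A\|\Gamma(t')\|_{\gH^s})\,\d t'$. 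Setting $Y(t):=\int_0^t\|\Gamma(t')\|_{\gH^s}\,\d t'$, inserting the previous bound and using $\int_0^t\|\Gamma\|_{\gH^s}^{1/2}\le(tY(t))^{1/2}$ by Cauchy-Schwarz, one arrives at $\|\Gamma(t)\|_{\gH^s}\le C_3(1+Y(t))$, i.e. $Y'\le C_3(1+Y)$, and Gronwall's lemma gives $Y(t)\le e^{C_3T_{\mathrm{max}}}-1<+\infty$ on $[0,T_{\mathrm{max}})$. Then $\|\Gamma(t)\|_{\gH^s}$, and hence also $\|(S,\omega)(t)\|_{H^s}$ and $\|(\dot S,\dot\omega)(t)\|_{H^{s-1}}$, are bounded on $[0,T_{\mathrm{max}})$, contradicting the maximality of $T_{\mathrm{max}}$.

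\textit{Main obstacle.} The one genuinely delicate step is the closure of the Gronwall argument in Part (ii): the Dirac-Fock/Klein-Gordon coupling is borderline and the naive bounds close only at quadratic order, which is insufficient to prevent finite-time blow-up of the auxiliary quantity $Y$. The square-root gain in \eqref{eq:est-density} and \eqref{eq:K-P-mb}, available precisely because $\Gamma(t)$ stays non-negative (and, by the Remark after Lemma~\ref{lem:est-density}, genuinely false otherwise), is exactly what linearizes the estimate and lets the continuation argument go through. All the remaining ingredients---the isometry of the free flow, the mass-dependent but finite bounds on the Klein-Gordon propagators, the matrix-to-scalar reduction for the commutators, and the standard continuation step---are routine.
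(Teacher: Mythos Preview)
Your proof is correct and follows essentially the same route as the paper: propagate non-negativity of $\Gamma$ via the unitary conjugation structure, then combine the refined density estimate~\eqref{eq:est-density} and the Kato--Ponce analogue~\eqref{eq:K-P-mb} (both of which need $\Gamma\ge0$) to turn the a priori $\gH^{s'}$ bound into a linear Gronwall inequality for $\|\Gamma\|_{\gH^s}$, with the same substitution-and-Cauchy--Schwarz trick to close the coupled system in part~(ii). If anything, you are slightly more explicit than the paper in tracking $\|(\dot S,\dot\omega)\|_{H^{s-1}}$, which is indeed needed for the continuation step.
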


\begin{proof}
 The proof relies on the application of Lemma \ref{lem:est-density} and Lemma \ref{lem:K-P-mb} in the case $s'\in\left(\tfrac32,s\right]$. To apply these results, one needs to ensure that $\Gamma_{\mathrm{nl}}(t)$ and $\Gamma(t)$ are non-negative operators for all times. This follows from the facts that the initial condition $\Gamma_\mathrm{in}$ is a non-negative operator and that both $\Gamma_{\mathrm{nl}}$ and $\Gamma$ solve a linear von Neumann equation $i\partial_t\Gamma=[D+F(t),\Gamma]$ with a time-dependent potential $F$ (continuous with values in $H^s$), and hence there exists a unitary operator $U(t)$ such that $\Gamma(t)=U(t)^*\Gamma_\mathrm{in} U(t)$ (which shows that non-negativity is propagated along the flow).

 We begin with the blow-up criterion for $\Gamma_{\mathrm{nl}}$. Thus, let $s'\in\left(\tfrac32,s\right]$, $0<T_1<T_{\mathrm{min}}^{\mathrm{nl}}$, $0<T_2<T_{\mathrm{max}}^{\mathrm{nl}}$ and $M>0$ such that 
 $$\|\Gamma_{\mathrm{nl}}\|_{\cC([-T_1,T_2],\gH^{s'})}\leq M.$$
 Defining the non-linear potential 
 $$F(t):=- { \beta}\gamma_\sigma \rho_s(\Gamma(t)) +  \gamma_\omega \rho_v(\Gamma(t)) -  \gamma_\omega {\bm \alpha}\cdot {\bm J}(\Gamma(t)),$$
 we have by Lemma \ref{lem:est-density} and Sobolev embeddings that 
 \begin{align}
  \|F(t)\|_{H^s}&\leq (4\gamma_\sigma + 16 \gamma_\omega) c_{\eqref{eq:est-density}}\|\Gamma(t)\|_{\gH^s}^{1/2}\|\Gamma(t)\|_{\gH^{s'}}^{1/2}
 \leq c_{\eqref{eq:F-mb s-bound}}(M)\|\Gamma(t)\|_{\gH^s}^{1/2} \label{eq:F-mb s-bound}\,\\
 \|F(t)\|_{L^\ii}&\leq C\|F(t)\|_{H^{s'}}\leq C'\|\Gamma(t)\|_{\gH^{s'}}\leq c_{\eqref{eq:F-mb infty-bound}}(M) \label{eq:F-mb infty-bound}.
 \end{align}
 These inequalities together with Lemma \ref{lem:K-P-mb} applied to \eqref{eq:dnl-mb-int} implies that for all $t\in[-T_1,T_2]$,
 \begin{align}
    \|\Gamma_{\mathrm{nl}}(t)\|_{\gH^s} &\leq 
    \|\Gamma_{\mathrm{in}}\|_{\gH^s}+c_{\eqref{eq:K-P-mb}}\int_0^{|t|}\big(\|F(t')\|_{H^s} \|\Gamma(t')\|_{\mathfrak{H}^s}^{1/2} \|\Gamma(t')\|_{\mathfrak{H}^{s'}}^{1/2} + \|F(t')\|_{L^\infty} \|\Gamma(t')\|_{\mathfrak{H}^s} \big)\d t' \notag \\
    &\leq \|\Gamma_{\mathrm{in}}\|_{\gH^s}+c_{\eqref{eq:Gamma_nl blowup}}(M)\int_0^{|t|}\|\Gamma(t')\|_{\mathfrak{H}^s}\,\d t'. \label{eq:Gamma_nl blowup}
 \end{align}
 Gronwall's inequality then shows that $\|\Gamma_{\mathrm{nl}}\|_{\cC([-T_1,T_2],\gH^{s})}\leq \|\Gamma_{\mathrm{in}}\|_{\gH^s} e^{c_{\eqref{eq:Gamma_nl blowup}}(M) \max\{T_1,T_2\}}$, proving that if the solution remains bounded in $\gH^{s'}$, then it also remains bounded in $\gH^s$.
 The argument for the coupled system \eqref{eq:dkg-mb} is similar: if $s'\in\left(\tfrac32,s\right]$, $0<T_1<T_{\mathrm{min}}$, $0<T_2<T_{\mathrm{max}}$ and $M>0$ are such that 
 $$\|(\Gamma,S,\omega)\|_{\cC([-T_1,T_2],\gH^{s'}\times H^{s'})}\leq M,$$
 then defining
 $$F(t):={ \beta}S +  V - {\bm \alpha}\cdot {\bm \omega}$$
 we have 
 $$\|F(t)\|_{H^s}\leq \|(S(t),\omega(t))\|_{H^s},\ \|F(t)\|_{L^\ii}\leq C\|(S(t),\omega(t))\|_{H^{s'}}\leq C M,$$
 and by Lemma \ref{lem:est-density} we also have 
 $$\|(\rho_s(\Gamma(t)),\rho_v(\Gamma(t)),J(\Gamma(t)))\|_{H^s}\leq 20 c_{\eqref{eq:est-density}}\|\Gamma(t)\|_{\gH^{s}}^{1/2}\|\Gamma(t)\|_{\gH^{s'}}^{1/2}\leq 20 c_{\eqref{eq:est-density}} M^{1/2} \|\Gamma(t)\|_{\gH^{s}}^{1/2}.$$
 As a consequence, from \eqref{eq:dkg-mb-int} and Lemma \ref{lem:K-P-mb}, we find that for all $t\in[-T_1,T_2]$,
 \begin{equation}\label{eq:blwp-Gam-dkg}
     \|\Gamma(t)\|_{\gH^s}\leq\|\Gamma_{\mathrm{in}}\|_{\gH^s}+c_{\eqref{eq:blwp-Gam-dkg}}(M)\int_0^{|t|}(\|(S(t'),\omega(t'))\|_{H^s}\|\Gamma(t')\|_{\gH^s}^{1/2}+\|\Gamma(t')\|_{\gH^s})\,\d t',
 \end{equation}
 $$\|(S(t),\omega(t))\|_{H^s}\leq \|(S_{\mathrm{in}},\omega_{\mathrm{in}})\|_{H^s}+\|(\dot{S}_{\mathrm{in}},\dot{\omega}_{\mathrm{in}})\|_{H^{s-1}}+20 c_{\eqref{eq:est-density}} M^{1/2}\int_0^{|t|}\|\Gamma(t')\|_{\gH^{s}}^{1/2}\,\d t'.$$
 Inserting the second bound into the first one, we find using the Cauchy-Schwarz inequality  that, for all $t\in[-T_1,T_2]$, 
 \begin{align}
    \|\Gamma(t)\|_{\gH^s} &\leq \|\Gamma_{\mathrm{in}}\|_{\gH^s}+c_{\eqref{eq:blwp-Gam-dkg-2}}(M)\int_0^{|t|}\Big(\|\Gamma(t')\|_{\gH^s} +\|\Gamma(t')\|_{\gH^s}^{1/2}\notag\\
    &\qquad\qquad\qquad\qquad\qquad+\|\Gamma(t')\|_{\gH^s}^{1/2}\int_0^{|t'|}\|\Gamma(t'')\|_{\gH^s}^{1/2}\,\d t''\Big)\d t'  \label{eq:blwp-Gam-dkg-2}\\
    &\leq \|\Gamma_{\mathrm{in}}\|_{\gH^s}+\tfrac12c_{\eqref{eq:blwp-Gam-dkg-2}}(M)|t|+ c_{\eqref{eq:blwp-Gam-dkg-2}}(M)(\tfrac32+|t|) \int_0^{|t|} \|\Gamma(t')\|_{\gH^s}\d t'. \notag
 \end{align}
 Applying again Gronwall's inequality, we find that $\|\Gamma(t)\|_{\gH^s}$ is uniformly bounded for $t\in[-T_1,T_2]$, which implies the same for $\|(S(t),\omega(t))\|_{H^s}$. This concludes the proof. 
 
 \end{proof}

\begin{remark}
 The reason why we state a blow-up criterion in $H^{s'}$ rather than in $L^\ii$ as in the one-body case is that it is not clear what should be the analog of the space $L^\ii$ for density matrices. 
\end{remark}

\subsection{Uniform estimates}

In order to discuss the limit $m_\sigma, m_\omega \to \infty$ we again consider the equations for 
\begin{equation}\label{eq:def-Sbar-omegabar}
 \overline{S}:=S+\gamma_\sigma \rho_s(\Gamma), \qquad \overline{\omega}:=\omega-\gamma_\omega J(\Gamma),
\end{equation}
with the corresponding initial conditions (c.f.~\eqref{eq:init-red}).
This transforms the equation for $\Gamma$ into
\begin{equation}\label{eq:Gamma-red}
 i\partial_t\Gamma
= \Big[D+ \underbrace{{ \beta}(\overline{S} -\gamma_\sigma \rho_s(\Gamma)) +  \gamma_\omega \rho_v(\Gamma) + \overline{V} -  {\bm \alpha}\cdot(\gamma_\omega  {\bm J}(\Gamma) +\overline{{\bm \omega}}) }_{=:W(\Gamma, \overline{S}, \overline{\omega})}, \Gamma \Big].
\end{equation}

\begin{lemma}\label{lem:red-eq-mb}
 Let $\gamma_\sigma, \gamma_\omega\geq 0$.
\begin{enumerate}[label=(\roman*)]
 \item There are functions $P=(P_\sigma,P_\omega)$ and $Q=(Q_\sigma, Q_\omega)$ such that, for all $m_\sigma, m_\omega>0$,
$(\Gamma,S,\omega)$ is a solution to~\eqref{eq:dkg-mb} with $g_\sigma=\sqrt{\gamma_\sigma}m_\sigma$ and $g_\omega=\sqrt{\gamma_\omega}m_\omega$ if and only $(\Gamma,\overline{S},\overline{\omega})$, where $(\bar{S},\bar{\omega})$ is given by \eqref{eq:def-Sbar-omegabar}, solves the equation
\begin{align}\label{eq:D-F-PQ}
 \left\{\begin{aligned}
        &i\partial_t \Gamma=\big[D + W(\Gamma, \overline{S}, \overline{\omega}),\Gamma] \\
        &(\partial_t^2-\Delta + m_\sigma^2)\overline{S}
	=  P_\sigma\circ (\Gamma, \overline{S}, \overline{\omega}, \nabla\Gamma, \nabla \overline{S}, \nabla \overline{\omega}, \nabla^2 \Gamma) + \partial_t \big(Q_\sigma\circ (\Gamma, \overline{S}, \overline{\omega})\big)\\
        &(\partial_t^2-\Delta + m_\omega^2)\overline{\omega}
	=  P_\omega\circ (\Gamma, \overline{S}, \overline{\omega}, \nabla\Gamma, \nabla \overline{S}, \nabla \overline{\omega},\nabla^2 \Gamma) + \partial_t \big(Q_\omega\circ(\Gamma, \overline{S}, \overline{\omega})\big)\,.
    \end{aligned}
    \right.
\end{align}
 %
%
\item  There exists a constant $c_{\eqref{eq:Q-s-mb-bound}}>0$ such that for all $(\Gamma,\overline{S}, \overline{\omega})$,
  \begin{equation}\label{eq:Q-s-mb-bound}
\| Q\circ(\Gamma,\overline{S}, \overline{\omega})\|_{H^s} \leq c_{\eqref{eq:Q-s-mb-bound}}  \left(\|\Gamma\|^2_{\mathfrak{H}^{s}} + \|\Gamma\|_{\mathfrak{H}^{s}} \|(\overline{S}, \overline{\omega})\|_{H^s}\right)\,.
\end{equation}
\item For all $\tfrac52<s'<s$ and  $M>0$ there exists a constant $c_{\eqref{eq:PQ-mb-bound}}(M)>0$ so that, for all $(\Gamma,\overline{S}, \overline{\omega})\in \mathfrak{H}^s \times H^{s}$ satisfying $\Gamma\geq 0$ and  
\begin{equation*}
 \|(\Gamma,\overline{S}, \overline{\omega})\|_{\mathfrak{H}^{s'}\times H^{s'}} \leq M\,,
\end{equation*} 
the inequality
\begin{equation}\label{eq:PQ-mb-bound}
 \begin{aligned}
 \| Q \circ(\Gamma,\overline{S}, \overline{\omega})\|_{H^s} 
 +&\| P\circ(\Gamma, \overline{S}, \overline{\omega}, \nabla\Gamma, \nabla \overline{S}, \nabla \overline{\omega},\nabla^2 \Gamma)\|_{H^{s-1}} \\
 &\leq c_{\eqref{eq:PQ-mb-bound}}(M)(\|\Gamma\|_{\mathfrak{H}^s}^{1/2}+\|(\overline{S},\overline{\omega} )\|_{H^{s}})\,
\end{aligned}
\end{equation}
holds.
\end{enumerate}
\end{lemma}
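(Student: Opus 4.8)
The plan is to follow the scheme of the proof of Lemma~\ref{lem.red-eq}, with the rôle of the sesquilinear forms $F(\Psi,\Psi)$ now played by the linear density maps $\rho_A(\Gamma)(x):=\Tr_{\C^4}\!\big(A\,\Gamma(x,x)\big)$, where $A$ runs over the Hermitian matrices $\gamma_\sigma\beta$, $-\gamma_\omega\mathrm{Id}_{\C^4}$, $-\gamma_\omega\alpha_1$, $-\gamma_\omega\alpha_2$, $-\gamma_\omega\alpha_3$, so that $\rho_A(\Gamma)$ runs over $\gamma_\sigma\rho_s(\Gamma)$ and the four components of $-\gamma_\omega J(\Gamma)$. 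As in the step leading to~\eqref{eq:kg-reduced}, one first checks that, when $g_\sigma^2=\gamma_\sigma m_\sigma^2$ and $g_\omega^2=\gamma_\omega m_\omega^2$, $(\Gamma,S,\omega)$ solves~\eqref{eq:dkg-mb} if and only if $(\Gamma,\overline S,\overline\omega)$, with $(\overline S,\overline\omega)$ as in~\eqref{eq:def-Sbar-omegabar}, solves~\eqref{eq:Gamma-red} together with
\[
(\partial_t^2-\Delta+m_\sigma^2)\overline S=\gamma_\sigma(\partial_t^2-\Delta)\rho_s(\Gamma),\qquad (\partial_t^2-\Delta+m_\omega^2)\overline\omega=-\gamma_\omega(\partial_t^2-\Delta)J(\Gamma),
\]
the mass terms cancelling against $-g_\sigma^2\rho_s(\Gamma)$, resp.\ $g_\omega^2 J(\Gamma)$ (the inverse change of variables being $S=\overline S-\gamma_\sigma\rho_s(\Gamma)$, $\omega=\overline\omega+\gamma_\omega J(\Gamma)$). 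It then remains to rewrite $(\partial_t^2-\Delta)\rho_A(\Gamma)$ purely in terms of $(\Gamma,\overline S,\overline\omega)$, using the equation $i\partial_t\Gamma=[D+W,\Gamma]$ with $W=W(\Gamma,\overline S,\overline\omega)$ as in~\eqref{eq:Gamma-red}.

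I would differentiate $\rho_A(\Gamma)$ once: since $\partial_t\Gamma=-i[D+W,\Gamma]$ and the diagonal of the kernel of $[W,\Gamma]$ is the pointwise commutator $[W(x),\Gamma(x,x)]$, one gets $\partial_t\rho_A(\Gamma)=-i\Tr_{\C^4}(A[D,\Gamma](x,x))+Q_\bullet$ with $Q_\bullet:=-i\Tr_{\C^4}\!\big([A,W(x)]\,\Gamma(x,x)\big)$. Since $W$ is affine in $(\Gamma,\overline S,\overline\omega)$, with the $\Gamma$-dependence entering linearly through $\rho_s,\rho_v,J$, the term $Q_\bullet$ is a sum of expressions quadratic in $\Gamma$ and of expressions linear in $(\overline S,\overline\omega)$ and linear in $\Gamma$; by the algebra property of $H^s$ and the bound $\|\Gamma(x,x)\|_{H^s}\le c_{\eqref{eq:est-density-s}}\|\Gamma\|_{\gH^s}$ from Lemma~\ref{lem:est-density}, this yields~\eqref{eq:Q-s-mb-bound}. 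Differentiating once more, substituting $\partial_t\Gamma=-i[D+W,\Gamma]$ again (which removes all time derivatives except the surviving $\partial_t Q_\bullet$), using $D^2=-\Delta+m^2$ to write $[D,[D,\Gamma]]=-\Delta\Gamma-\Gamma\Delta-2D\Gamma D+2m^2\Gamma$, and invoking the chain-rule identity for the diagonal of the kernel, $\Delta\rho_A(\Gamma)(x)=\Tr_{\C^4}\!\big(A[\Delta\Gamma+\Gamma\Delta-2\sum_k\partial_k\Gamma\partial_k](x,x)\big)$, the pure $\Delta_x$- and $\Delta_y$-contributions cancel and one is left with $(\partial_t^2-\Delta)\rho_A(\Gamma)=P_\bullet+\partial_t Q_\bullet$, where
\[
P_\bullet:=\Tr_{\C^4}\!\Big(A\big[2D\Gamma D+2\textstyle\sum_k\partial_k\Gamma\partial_k-2m^2\Gamma\big](x,x)\Big)-\Tr_{\C^4}\!\big(A[D,[W,\Gamma]](x,x)\big).
\]
This defines $P=(P_\sigma,P_\omega)$ and $Q=(Q_\sigma,Q_\omega)$, which manifestly do not involve $m_\sigma,m_\omega$; retracing the computation gives the equivalence (i). Contrary to the one-body case, the second-order terms do \emph{not} cancel completely: the two-sided term $D\Gamma D$ contributes, through the diagonal of the kernel of $(-i\bm\alpha\cdot\nabla)\Gamma(-i\bm\alpha\cdot\nabla)$, an expression of the form $\sum_{j,k}\alpha_j(\partial_{x_j}\partial_{y_k}\Gamma)(x,x)\alpha_k$, which — because of the sandwiching matrices $\alpha_j,\alpha_k$ — is not matched by the contribution of $\sum_k\partial_k\Gamma\partial_k$. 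Hence $P$ genuinely depends on the mixed second derivatives $(\partial_{x_j}\partial_{y_k}\Gamma)(x,x)=-(\partial_j\Gamma\partial_k)(x,x)$, which is why $\nabla^2\Gamma$ appears among the arguments of $P$ in~\eqref{eq:D-F-PQ}, while $\overline S,\overline\omega$ still occur only up to first order.

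For the bound~\eqref{eq:PQ-mb-bound} the crucial point is the gain of the half-power $\|\Gamma\|_{\gH^s}^{1/2}$, which exploits $\Gamma\ge0$. For $Q$, I would replace~\eqref{eq:est-density-s} by the refined estimate~\eqref{eq:est-density}, giving $\|\Gamma(x,x)\|_{H^s}\le c_{\eqref{eq:est-density}}\|\Gamma\|_{\gH^s}^{1/2}\|\Gamma\|_{\gH^{s'}}^{1/2}\lesssim M^{1/2}\|\Gamma\|_{\gH^s}^{1/2}$, together with $\|\Gamma(x,x)\|_{L^\infty}\lesssim\|\Gamma\|_{\gH^{s'}}\le M$ by Sobolev embedding, and apply the Kato--Ponce inequality~\eqref{eq:K-P} to the products in $Q_\bullet$. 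For $P$, the zeroth-order term $-2m^2\Tr_{\C^4}(A\Gamma(x,x))$ is bounded in $H^{s-1}$ by~\eqref{eq:est-density} directly; the first- and second-order terms require a derivative version of Lemma~\ref{lem:est-density}: the Fourier/Cauchy--Schwarz argument in its proof shows that, for any bounded operator $B$ with continuous kernel and any $\tfrac32<\sigma'\le s-1$,
\[
\|B(x,x)\|_{H^{s-1}}\lesssim\|(1-\Delta)^{\sigma'/2}B(1-\Delta)^{(s-1)/2}\|_{\gS^2}+\|(1-\Delta)^{(s-1)/2}B(1-\Delta)^{\sigma'/2}\|_{\gS^2};
\]
applying this with $B=\partial_j\Gamma\partial_k$ (resp.\ $B=\partial_j\Gamma$), factoring $\Gamma=\sqrt\Gamma\sqrt\Gamma$ and using Hölder's inequality in Schatten classes exactly as for~\eqref{eq:Gamma-mixed-bound}, one gets $\|(\partial_j\Gamma\partial_k)(x,x)\|_{H^{s-1}}\lesssim\|\Gamma\|_{\gH^s}^{1/2}\|\Gamma\|_{\gH^{\sigma'+1}}^{1/2}$, and choosing $\sigma'=s'-1\in(\tfrac32,s-1]$ — this is exactly where the hypotheses $\tfrac52<s'$ and $s'<s$ enter — yields $\lesssim M^{1/2}\|\Gamma\|_{\gH^s}^{1/2}$. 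Finally, the commutator term $[D,[W,\Gamma]]$, whose diagonal is a sum of products of $W$ or $\nabla W$ (with $W\sim\rho_\bullet(\Gamma)+(\overline S,\overline\omega)$) with $\Gamma(x,x)$ or its one-sided first derivatives, is estimated in $H^{s-1}$ by Kato--Ponce, the $\Gamma$-factors contributing $M^{1/2}\|\Gamma\|_{\gH^s}^{1/2}$ in $H^{s-1}$ and $M$ in $L^\infty$, and the $W,\nabla W$ factors contributing $M^{1/2}\|\Gamma\|_{\gH^s}^{1/2}+\|(\overline S,\overline\omega)\|_{H^s}$ in $H^{s-1}$ and $M$ in $L^\infty$. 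Summing all contributions gives~\eqref{eq:PQ-mb-bound}.

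The main obstacle, compared to Lemma~\ref{lem.red-eq}, is twofold: the genuine appearance of $\nabla^2\Gamma$ in $P$, which has no counterpart for a pure state (it stems from the two-sided operator $D\Gamma D$), and the need to control the diagonal restriction of this second derivative in $H^{s-1}$ with only a half-power of $\|\Gamma\|_{\gH^s}$. Both are handled by the non-negativity of $\Gamma$, used through the factorization $\Gamma=\sqrt\Gamma\sqrt\Gamma$ and Hölder's inequality in Schatten classes, as in the proof of~\eqref{eq:est-density}; the resulting half-power in~\eqref{eq:PQ-mb-bound} is precisely what will let the subsequent Grönwall estimates for $\overline S,\overline\omega$ close uniformly in $m_\sigma,m_\omega$.
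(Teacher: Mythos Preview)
Your proposal is correct and follows essentially the same approach as the paper: both compute $P,Q$ by differentiating $\rho_A(\Gamma)$ along the flow $i\partial_t\Gamma=[D+W,\Gamma]$, split off $Q_\bullet=-i\Tr_{\C^4}(A[W,\Gamma(x,x)])$, exploit $D^2=-\Delta+m^2$ to cancel the one-sided second derivatives, and then prove the bounds via Lemma~\ref{lem:est-density}, Kato--Ponce, and the factorization $\Gamma=\sqrt\Gamma\sqrt\Gamma$ with H\"older in Schatten classes. The only notable difference is in the treatment of the surviving second-order piece: the paper packages it as $4D\Gamma D$ and applies~\eqref{eq:est-density} directly to the \emph{non-negative} operator $D\Gamma D$ (so that the half-power bound is immediate), whereas you keep the individual terms $\partial_j\Gamma\partial_k$ and recover the half-power by the H\"older/$\sqrt\Gamma$ argument of~\eqref{eq:Gamma-mixed-bound}; both routes give the same estimate and require $s'>\tfrac52$ for the same reason.
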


\begin{proof}
 Let $F(\Gamma)\in \{\rho_s(\Gamma),- \rho_v(\Gamma), -{\bm J}_k(\Gamma), k=1,2, 3\}$. By the same reasoning as in~\eqref{eq:kg-reduced}, the right hand side of the equations for $\overline{S}, \overline{\omega}$ is given by $\gamma_\bullet (\partial_t^2 - \Delta)F$, for the appropriate choice of $\bullet$ and $F$.
 For a solution $\Gamma(t)$ of~\eqref{eq:dkg-mb} we have, for a self-adjoint $4\times 4$ matrix $A$ determined by the choice of $F$,
 \begin{equation}
  \partial_t F(\Gamma) = -i\tr_{\C^4}\Big(A \big[D +W(\Gamma, \overline{S}, \overline{\omega}) , \Gamma\big](x,x) \Big),
 \end{equation}
that is, one first computes  the commutator, then evaluates the kernel of the resulting operator on the diagonal, and then takes the trace. In analogy with the one-body case, we thus set
\begin{equation}\label{eq:Q-mb-def}
 Q_\bullet:= -i\tr_{\C^4}\Big(A \big[W(\Gamma, \overline{S}, \overline{\omega}) , \Gamma\big](x,x) \Big) =-i\tr_{\C^4}\Big(A \big[W(\Gamma, \overline{S}, \overline{\omega}) , \Gamma(x,x)\big] \Big) .
\end{equation}
Then 
\begin{equation}
 \partial_t^2 F(\Gamma)= -\tr_{\C^4}\Big(A \big[D,[D +W(\Gamma, \overline{S}, \overline{\omega}) , \Gamma]\big](x,x) \Big) + \partial_t Q_\bullet(\Gamma).
\end{equation}
Moreover, since $D_x \Gamma(x,y)= (D\Gamma)(x,y)$, we have
\begin{align}
 D^2 F &= \tr_{\C^4}\Big(A D^2\Gamma(x,x) + 2 A D \Gamma D(x,x) + A \Gamma D^2 (x,x) \Big) \notag \\
&=\tr_{\C^4}\Big(A \big[D,[D , \Gamma]\big](x,x) +4  A D \Gamma D(x,x)\Big).
\end{align}
We thus have
\begin{equation}
 (\partial_t^2 -\Delta)F(\Gamma)= P_\bullet\big(\Gamma, \overline{S}, \overline{\omega}, \nabla\Gamma, \nabla \overline{S}, \nabla \overline{\omega},\nabla^2 \Gamma) +\partial_t Q_\bullet(\Gamma),
\end{equation}
with
\begin{equation}
 P_\bullet=\tr_{\C^4}\Big(-A \big[D,[W(\Gamma, \overline{S}, \overline{\omega}) , \Gamma]\big]+4  A D \Gamma D(x,x) + A\Gamma(x,x) \Big).
\end{equation}
Note that $P_\bullet$ involves second derivatives of $\Gamma(x,y)$, but not second derivatives with respect to the same variable.

The bound~\eqref{eq:Q-s-mb-bound} on $Q_\bullet$ follows from Lemma~\ref{lem:est-density} since $Q$ is linear in $\overline{S}, \overline{\omega}$, quadratic in $\Gamma$, and $H^s$ is an algebra. 

To obtain the bound~\eqref{eq:PQ-mb-bound}, we start from~\eqref{eq:Q-mb-def} and apply the Kato-Ponce inequality~\eqref{eq:K-P}, which gives
\begin{align*}
 \|Q_\bullet\circ(\Gamma, \overline{S}, \overline{\omega})\|_{H^s}
 &\leq 4 \|A\|_{\C^{4\times 4}} \|[W(\Gamma, \overline{S}, \overline{\omega}),\Gamma(x,x)]\|_{H^s} \\
 &\leq 8 \|A\|_{\C^{4\times 4}} c_{\eqref{eq:K-P}}\begin{aligned}[t]
 \Big(\|W(\Gamma, \overline{S}, \overline{\omega})\|_{L^\infty}&  \|\Gamma(x,x)\|_{H^s} \\
 &+ \|W(\Gamma, \overline{S}, \overline{\omega})\|_{H^s}\|\Gamma(x,x)\|_{L^\infty}\Big)
 \end{aligned}
\end{align*}
The Sobolev embedding $H^{s'}\hookrightarrow L^\infty$ and Lemma~\ref{lem:est-density} give, since $W$ is linear in $\Gamma(x,x), \overline{S}, \overline{\omega}$,
\begin{align}
 \|W(\Gamma, \overline{S}, \overline{\omega})\|_{L^\infty} \|\Gamma(x,x)\|_{H^s}
 &\leq c_{\eqref{eq:Q-mb-bound1}}(M)\|\Gamma\|_{\mathfrak{H}^{s}}^{1/2} \label{eq:Q-mb-bound1} \\
 \|W(\Gamma, \overline{S}, \overline{\omega})\|_{H^s}\|\Gamma(x,x)\|_{L^\infty}
 &\leq c_{\eqref{eq:Q-mb-bound2}}(M)(\|\Gamma\|_{\mathfrak{H}^{s}}^{1/2} + \|(\overline{S}, \overline{\omega})\|_{H^s})\label{eq:Q-mb-bound2}.
\end{align}
This is the required bound for $Q$.
For the bound on $P$, first note that Lemma~\ref{lem:est-density} gives
\begin{align}
 \left\|\tr_{C^4}\Big(A D \Gamma D(x,x)\Big)\right\|_{H^{s-1}(\R^3,\C)} 
 &\leq 4 \|A\|_{\C^{4\times 4}}\|D \Gamma D(x,x)\|_{H^{s-1}(\R^3, \C^{4\times 4})} \notag \\
 &\leq  4 c_{\eqref{eq:est-density}} \|A\|_{\C^{4\times 4}}  \|D \Gamma D\|_{\mathfrak{H}^{s'-1}}^{1/2}\|D \Gamma D\|_{\mathfrak{H}^{s}}^{1/2}\notag \\
 &\leq  c_{\eqref{eq:Tr-A bound}}(M)  \|\Gamma \|_{\mathfrak{H}^{s}}^{1/2} \label{eq:Tr-A bound}.
\end{align}
Furthermore, applying the Kato-Ponce inequality~\eqref{eq:K-P} twice, we have 
\begin{align}
  &\left\|\tr_{C^4}\Big(A\big[D,[W(\Gamma, \overline{S}, \overline{\omega}) , \Gamma]\big]\right\|_{H^{s-1}(\R^3,\C)} \notag\\
  &\leq \left\|\tr_{C^4}\Big(A\big[D,W(\Gamma, \overline{S}, \overline{\omega})\big] \Gamma(x,x)\Big)\right\|_{H^{s-1}} + \left\|\tr_{C^4}\Big(A W(\Gamma, \overline{S}, \overline{\omega})\big[D,\Gamma\big] (x,x)\Big)\right\|_{H^{s-1}} \notag\\
  &\leq c_{\eqref{eq:mb-P-KPbound}} \|A\|\Big( \|W(\Gamma, \overline{S}, \overline{\omega})\|_{H^{s}} \|\Gamma(x,x)\|_{W^{1,\infty}} + \|W(\Gamma, \overline{S}, \overline{\omega})\|_{W^{1, \infty}} \|\Gamma(x,x)\|_{H^{s}} \Big). 
  \label{eq:mb-P-KPbound}
\end{align}
for a constant $c_{\eqref{eq:mb-P-KPbound}}>0$. This implies~\eqref{eq:PQ-mb-bound} by bounds analogous to~\eqref{eq:Q-mb-bound1},~\eqref{eq:Q-mb-bound2}.

The equivalence of equations \eqref{eq:dkg-mb} and \eqref{eq:D-F-PQ} follows from these calculations as in Lemma~\ref{lem.red-eq}, and the proof is thus complete.

\end{proof}

In view of Lemma~\ref{lem:red-eq-mb} we have an analogous result to Lemma~\ref{lem:unif-exist} in the many-body case.

\begin{lemma}\label{lem:unif-exist-mb} 
 Let $s>\tfrac52$, $\gamma_\sigma, \gamma_\omega\geq 0$, and $s'\in\left(\tfrac52,s\right]$.
 For $(\Gamma_\mathrm{in}, S_\mathrm{in}, \dot S_\mathrm{in}, \ \omega_\mathrm{in}, \dot \omega_\mathrm{in})$ as in Theorem~\ref{thm.main-mb} set 
 \begin{align*}
  R_0&:= \|\Gamma_\mathrm{in}\|_{\mathfrak{H}^s}
+\|(\overline{\omega}_\mathrm{in}, \overline{S}_\mathrm{in} )\|_{H^s} 
+ \|(\dot{\overline{\omega}}_\mathrm{in}, \dot{\overline{S}}_\mathrm{in})\|_{H^{s-1}} 
+\|Q_\omega(\Gamma_\mathrm{in}, \overline{\omega}_\mathrm{in}, \overline{S}_\mathrm{in} )\|_{H^{s-1}}\\
&\qquad+ \|Q_\sigma(\Gamma_\mathrm{in}, \overline{\omega}_\mathrm{in}, \overline{S}_\mathrm{in} )\|_{H^{s-1}}\,.
 \end{align*}
 For all $T_1,T_2>0$ and for all $M>0$, there exists $M'>R_0$ so that for every solution $(\Gamma, \overline{S},\overline{\omega})\in \mathcal{C}([-T_1, T_2],\mathfrak{H}^s \times H^s)$ to~\eqref{eq:D-F-PQ}
satisfying 
\begin{equation*}
  \| (\Gamma, \overline{S}, \overline{\omega}) \|_{\mathcal{C}([-T_1, T_2],\mathfrak{H}^{s'}\times H^{s'}) } \leq M
\end{equation*}
the inequality
\begin{equation}\label{pt2-unif-mb}
 \|(\Gamma, \overline{S}, \overline{\omega})\|_{\mathcal{C}([-T_1, T_2],\mathfrak{H}^s\times H^s)} \leq M'
\end{equation}
holds.
\end{lemma}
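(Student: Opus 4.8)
The plan is to follow closely the proof of Lemma~\ref{lem:unif-exist}, making two systematic substitutions: the $W^{1,\infty}$-control of the one-body case is replaced by the assumed $\mathfrak{H}^{s'}\times H^{s'}$-control with $s'>\tfrac52$, and the algebra property of $H^s$ is replaced by the Kato--Ponce-type inequality of Lemma~\ref{lem:K-P-mb} and the density estimates of Lemma~\ref{lem:est-density}. As in the one-body case one works on the integral version of \eqref{eq:D-F-PQ}, i.e. the Duhamel formula for the von Neumann equation $i\partial_t\Gamma=[D+W(\Gamma,\overline{S},\overline{\omega}),\Gamma]$, together with the Klein--Gordon Duhamel formulas for $\overline{S},\overline{\omega}$ with sources $P+\partial_t Q$; in the latter one integrates by parts once in the term $\partial_t Q$, turning it into the boundary term $\tfrac{\sin(t\sqrt{-\Delta+m^2})}{\sqrt{-\Delta+m^2}}Q(\Gamma_{\mathrm{in}},\overline{S}_{\mathrm{in}},\overline{\omega}_{\mathrm{in}})$ plus an integral of $\cos((t-t')\sqrt{-\Delta+m^2})\,Q(\Gamma,\overline{S},\overline{\omega})(t')$, exactly as in \eqref{eq:Phidef}. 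Since only the combination $(-\Delta+m^2)^{-1/2}$ hits $P$, the bound \eqref{eq.inegOp} with $\mu=m_\sigma,m_\omega\ge1$ makes all constants below uniform in the masses.

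First I would note that $\Gamma(t)\ge0$ for all $t$ in the interval: this is the propagation-of-positivity argument from the proof of Lemma~\ref{lem:blowup-crit-mb} (since $\Gamma$ solves a linear von Neumann equation with $H^s$-valued potential $W$, it equals $U(t)^*\Gamma_{\mathrm{in}}U(t)$ for a unitary $U(t)$). This non-negativity is essential: it is only through the geometric-mean bounds $\|\Gamma(x,x)\|_{H^s}\lesssim\|\Gamma\|_{\mathfrak{H}^s}^{1/2}\|\Gamma\|_{\mathfrak{H}^{s'}}^{1/2}$ of \eqref{eq:est-density} and the corresponding half-power in \eqref{eq:K-P-mb} that the nonlinear terms stay (almost) linear in $\|\Gamma\|_{\mathfrak{H}^s}$. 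Using $\|(\Gamma,\overline{S},\overline{\omega})\|_{\mathfrak{H}^{s'}\times H^{s'}}\le M$ together with Lemma~\ref{lem:est-density} (with exponents $s$ and $s'$) and $H^{s'}\hookrightarrow L^\infty$, one gets $\|W\|_{L^\infty}\le c(M)$ and $\|W\|_{H^s}\le c(M)(\|\Gamma\|_{\mathfrak{H}^s}^{1/2}+\|(\overline{S},\overline{\omega})\|_{H^s})$. Inserting these into the Duhamel formula for $\Gamma$, applying Lemma~\ref{lem:K-P-mb} to $[W,\Gamma]=W\Gamma-\Gamma W$ (whose $\mathfrak{H}^s$-norm is invariant under conjugation by $e^{-i(t-t')D}$, since $D$ commutes with $-\Delta$) and using $\|\Gamma\|_{\mathfrak{H}^{s'}}^{1/2}\le M^{1/2}$ gives $\|\Gamma(t)\|_{\mathfrak{H}^s}\le\|\Gamma_{\mathrm{in}}\|_{\mathfrak{H}^s}+c(M)\int_0^{|t|}\bigl(\|\Gamma\|_{\mathfrak{H}^s}+\|\Gamma\|_{\mathfrak{H}^s}^{1/2}\|(\overline{S},\overline{\omega})\|_{H^s}\bigr)\,\d t'$. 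In parallel, the initial-data terms in the Duhamel formula for $(\overline{S},\overline{\omega})$ are bounded by $R_0$ via \eqref{eq.inegOp}, while \eqref{eq:PQ-mb-bound} (for the $P$-term, measured in $H^{s-1}$ thanks to the smoothing factor $(-\Delta+m^2)^{-1/2}$) and \eqref{eq:Q-s-mb-bound}--\eqref{eq:PQ-mb-bound} (for the $Q$-terms) give $\|(\overline{S},\overline{\omega})(t)\|_{H^s}\le R_0+c(M)\int_0^{|t|}\bigl(\|\Gamma\|_{\mathfrak{H}^s}^{1/2}+\|(\overline{S},\overline{\omega})\|_{H^s}\bigr)\,\d t'$.

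It remains to close these two coupled inequalities, which is the one genuinely new point compared to the one-body case. The only non-affine term is $\|\Gamma\|_{\mathfrak{H}^s}^{1/2}\|(\overline{S},\overline{\omega})\|_{H^s}$, which, estimated crudely, would produce a quadratic Gronwall inequality with no a priori bound on a fixed interval. The remedy is exactly the one used for \eqref{eq:blwp-Gam-dkg}--\eqref{eq:blwp-Gam-dkg-2} in the proof of Lemma~\ref{lem:blowup-crit-mb}: substitute the second inequality into the first, so that $\|(\overline{S},\overline{\omega})\|_{H^s}$ is replaced by $R_0$ plus a time integral of $\|\Gamma\|_{\mathfrak{H}^s}^{1/2}$; then use $\|\Gamma\|_{\mathfrak{H}^s}^{1/2}\le\tfrac12(1+\|\Gamma\|_{\mathfrak{H}^s})$ and the Cauchy--Schwarz bound $\bigl(\int_0^{|t'|}\|\Gamma\|_{\mathfrak{H}^s}^{1/2}\,\d t''\bigr)^2\le|t'|\int_0^{|t'|}\|\Gamma\|_{\mathfrak{H}^s}\,\d t''$ to render every term affine in $\|\Gamma\|_{\mathfrak{H}^s}$. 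One application of Gronwall's lemma then bounds $\|\Gamma\|_{\mathcal{C}([-T_1,T_2],\mathfrak{H}^s)}$ by a constant depending only on $M$, $T_1$, $T_2$ and the initial data (in particular larger than $R_0$), and plugging this back into the inequality for $(\overline{S},\overline{\omega})$ and applying Gronwall once more yields \eqref{pt2-unif-mb} with $M'$ independent of $m_\sigma,m_\omega\ge1$. The main obstacle is therefore this last step: extracting a linear Gronwall inequality despite the unavoidable half-power $\|\Gamma\|_{\mathfrak{H}^s}^{1/2}$ coming from the Kato--Ponce estimate for density matrices, together with the (routine but necessary) verification that non-negativity of $\Gamma$ is propagated along the flow so that those estimates apply.
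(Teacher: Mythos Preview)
Your proposal is correct and follows essentially the same route as the paper's proof. The one point worth sharpening is your substitution step: the inequality for $\|(\overline{S},\overline{\omega})(t)\|_{H^s}$ coming from \eqref{eq:PQ-mb-bound} contains $\|(\overline{S},\overline{\omega})\|_{H^s}$ itself under the time integral (from the $Q$-term after the integration by parts), so a direct substitution into the $\Gamma$-inequality does not immediately ``replace $\|(\overline{S},\overline{\omega})\|_{H^s}$ by $R_0$ plus a time integral of $\|\Gamma\|_{\mathfrak{H}^s}^{1/2}$'' as you write; this is precisely where the present situation differs from \eqref{eq:blwp-Gam-dkg}--\eqref{eq:blwp-Gam-dkg-2}, where no such self-term appears. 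The paper handles this by first applying Gronwall to the $(\overline{S},\overline{\omega})$-inequality (which is linear in $(\overline{S},\overline{\omega})$), obtaining
\[
\|(\overline{S},\overline{\omega})(t)\|_{H^s}\le\Big(R_0+c(M)\int_0^{|t|}\|\Gamma(t')\|_{\mathfrak{H}^s}^{1/2}\,\d t'\Big)e^{c(M)\max\{T_1,T_2\}},
\]
and only then substituting into the $\Gamma$-inequality and closing via Cauchy--Schwarz and Gronwall exactly as you describe. With this minor reordering your argument matches the paper's and goes through.
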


\begin{proof}
We bound $(\Gamma,\bar{S},\bar{\omega})$ using the integral formulation of \eqref{eq:D-F-PQ} which can be inferred from \eqref{eq:D-F-PQ} in the same way that \eqref{eq:dkg-mb-int} is inferred from \eqref{eq:dkg-mb}. In this integral formulation, we use an integration by parts for the terms involving $\partial_t Q$ in the same way that we obtained \eqref{eq:Phidef}.  Let $t\in[-T_1,T_2]$. As in the proof of Lemma \ref{lem:blowup-crit-mb}, recall first that $\Gamma(t)$ is a non-negative operator. We have the bound
 $$
  \|\Gamma(t)\|_{\mathfrak{H}^s} \leq
  \|\Gamma_\mathrm{in}\|_{\mathfrak{H}^s} + \int_0^{|t|} \Big( \|\Gamma W(\Gamma, \overline{S}, \overline{\omega}) \|_{\mathfrak{H}^s} +\|W(\Gamma, \overline{S}, \overline{\omega}) \Gamma  \|_{\mathfrak{H}^s}\Big)(t') \d t'  
 $$
From Lemma~\ref{lem:K-P-mb}, we have 
$$
  \|\Gamma W \|_{\mathfrak{H}^s} +\|W \Gamma  \|_{\mathfrak{H}^s}\leq c_{\eqref{eq:K-P-mb}} \left( \|\Gamma\|_{\mathfrak{H}^s}^{1/2}\|\Gamma\|_{\mathfrak{H}^{s'}}^{1/2} \|W\|_{H^s} + \|\Gamma\|_{\mathfrak{H^s}} \|W\|_{L^\infty} \right).
$$
The form of $W$ (c.f.~\eqref{eq:Gamma-red}), Lemma~\ref{lem:est-density}, and the assumed bound in $H^{s'}$ imply  \begin{align}\label{eq:unif-Gam-mb}
 \|\Gamma\|_{\mathfrak{H}^s}^{1/2}\|\Gamma\|_{\mathfrak{H}^{s'}}^{1/2} \|W\|_{H^s} &\leq  c_{\eqref{eq:unif-Gam-mb}}(M)\Big(\|\Gamma\|_{\mathfrak{H}^s}^{1/2}\|(\overline{S}, \overline{\omega})\|_{H^s} + \|\Gamma\|_{\mathfrak{H}^s}\Big) \\
 \|W\|_{L^\ii}&\leq C\|W\|_{H^{s'}}\leq c_{\eqref{eq:sobobev-inj}}(M). \label{eq:sobobev-inj}
\end{align}
 We thus deduce that 
 $$\|\Gamma(t)\|_{\mathfrak{H}^s} \leq
  \|\Gamma_\mathrm{in}\|_{\mathfrak{H}^s} + c_{\eqref{eq:K-P-mb}}(c_{\eqref{eq:unif-Gam-mb}}(M)+c_{\eqref{eq:sobobev-inj}}(M))\int_0^{|t|} \Big(\|\Gamma\|_{\mathfrak{H}^s}^{1/2}\|(\overline{S}, \overline{\omega})\|_{H^s} + \|\Gamma\|_{\mathfrak{H}^s} \Big)(t') \d t'.  $$
In the same way as in Lemma~\ref{lem:unif-exist}, we deduce from Lemma~\ref{lem:red-eq-mb} that
\begin{align}
 \|(\bar{S},\bar{\omega})(t)\|_{H^s} &\leq 
\|(\overline{\omega}_\mathrm{in}, \overline{S}_\mathrm{in} )\|_{H^s} + \|(\dot{\overline{\omega}}_\mathrm{in}, \dot{\overline{S}}_\mathrm{in})\|_{H^{s-1}} \notag \\
 &\qquad  +\|Q_\omega(\Gamma_\mathrm{in}, \overline{\omega}_\mathrm{in}, \overline{S}_\mathrm{in} )\|_{H^{s-1}} + \|Q_\sigma(\Gamma_\mathrm{in}, \overline{\omega}_\mathrm{in}, \overline{S}_\mathrm{in} )\|_{H^{s-1}} \notag\\
  &\qquad +c_{\eqref{eq:PQ-mb-bound}}(M)\int_0^{|t|}\Big(
 \|\Gamma\|_{\mathfrak{H}^s }^{1/2} +\|(\overline{S},\overline{\omega})\|_{H^s }\Big)(t')\d t'.
 \end{align}
 By Gronwall's inequality, this implies that for all $t\in[-T_1,T_2]$, 
 \begin{align*}
    \|(\bar{S},\bar{\omega})(t)\|_{H^s}\leq \Big(\|(\overline{\omega}_\mathrm{in}, \overline{S}_\mathrm{in} )\|_{H^s} + \|(\dot{\overline{\omega}}_\mathrm{in}, \dot{\overline{S}}_\mathrm{in})\|_{H^{s-1}} +\|Q_\omega(\Gamma_\mathrm{in}, \overline{\omega}_\mathrm{in}, \overline{S}_\mathrm{in} )\|_{H^{s-1}}  \\
    +\|Q_\sigma(\Gamma_\mathrm{in}, \overline{\omega}_\mathrm{in}, \overline{S}_\mathrm{in} )\|_{H^{s-1}}+c_{\eqref{eq:PQ-mb-bound}}(M)\int_0^{|t|}
    \|\Gamma(t')\|_{\mathfrak{H}^s }^{1/2}\,\d t'\Big)e^{c_{\eqref{eq:PQ-mb-bound}}(M)\max\{T_1,T_2\}}.
 \end{align*}
 One can then insert this bound into the bound for $\|\Gamma(t)\|_{\gH^s}$ as in the proof of Lemma \ref{lem:blowup-crit-mb} to deduce the result.
\end{proof}

\subsection{Convergence}

We now turn to the convergence of $\Gamma \to \Gamma_\mathrm{nl}$.

\begin{proposition}\label{prop.conv-mb}
Let $s>\tfrac52$, $0<T_1<T_{\mathrm{min}}^{\mathrm{nl}}$, and $0<T_2<T_{\mathrm{max}}^{\mathrm{nl}}$.  For all $s'\in[s-1,s]$ and $M>0$, there exists $c_{\eqref{eq:Gamma-loc-conv}}(M)>0$ so that 
the following holds. 
For any $m_\sigma, m_\omega\geq 1$ and every solution $(\Gamma, \overline{S}, \overline{\omega})\in\cC([-T_1,T_2],\gH^s\times H^s)$ to~\eqref{eq:D-F-PQ} that satisfies
 \begin{equation*}
  \|(\Gamma, \overline{S}, \overline{\omega})\|_{\mathcal{C}([-T_1,T_2],\mathfrak{H}^s\times H^s)}\leq M,
 \end{equation*}
 we have
\begin{equation}\label{eq:Gamma-loc-conv}
\|\Gamma-\Gamma_{\mathrm{nl}}\|_{\mathcal{C}([-T_1,T_2],\mathfrak{H}^{s'}\times H^{s'})}\leq c_{\eqref{eq:Gamma-loc-conv}}(M)\left(m_\sigma^{s'-s}+m_{\omega}^{s'-s} \right)\,. 
\end{equation}
\end{proposition}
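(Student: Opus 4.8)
The plan is to follow closely the proof of Proposition~\ref{prop.conv}, substituting Lemma~\ref{lem:red-eq-mb} for Lemma~\ref{lem.red-eq}, the density estimate~\eqref{eq:est-density-s} for the algebra property of $H^s$, and the product bound of Lemma~\ref{lem:K-P-mb} for the Kato--Ponce inequality~\eqref{eq:K-P}. First I would split the reduced fields as in~\eqref{eq:tilde-fields}, setting
\[
\begin{aligned}
 \widetilde S(t)&:=\overline S(t)-\cos\!\big(t\sqrt{-\Delta+m_\sigma^2}\big)\overline S_\mathrm{in}-\int_0^t\cos\!\big((t-t')\sqrt{-\Delta+m_\sigma^2}\big)Q_\sigma(\Gamma,\overline S,\overline\omega)(t')\,\d t',\\
 \widetilde\omega(t)&:=\overline\omega(t)-\cos\!\big(t\sqrt{-\Delta+m_\omega^2}\big)\overline\omega_\mathrm{in}-\int_0^t\cos\!\big((t-t')\sqrt{-\Delta+m_\omega^2}\big)Q_\omega(\Gamma,\overline S,\overline\omega)(t')\,\d t',
\end{aligned}
\]
and $\widetilde\omega=(\widetilde V,\widetilde{\bm\omega})$. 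The many-body counterpart of Lemma~\ref{lem:Phi-error}, proved by the same integration-by-parts argument but now invoking Lemma~\ref{lem:red-eq-mb} and~\eqref{eq.inegOp}, gives $\|\widetilde S(t)\|_{H^{s'}}\le c(M)\,m_\sigma^{s'-s}$ and $\|\widetilde\omega(t)\|_{H^{s'}}\le c(M)\,m_\omega^{s'-s}$ on $[-T_1,T_2]$.

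Next I would subtract~\eqref{eq:dnl-mb-int} from the integral form of the $\Gamma$-equation of~\eqref{eq:D-F-PQ}. Writing $W(\Gamma,\overline S,\overline\omega)=W_{\mathrm{nl}}(\Gamma)+(\beta\widetilde S-{\bm\alpha}\cdot\widetilde{\bm\omega}+\widetilde V)+(\beta(\overline S-\widetilde S)-{\bm\alpha}\cdot(\overline{\bm\omega}-\widetilde{\bm\omega})+(\overline V-\widetilde V))$, with $W_{\mathrm{nl}}(\Gamma):=-\beta\gamma_\sigma\rho_s(\Gamma)+\gamma_\omega\rho_v(\Gamma)-\gamma_\omega{\bm\alpha}\cdot{\bm J}(\Gamma)$, and using that the densities are linear in $\Gamma$, one obtains
\[
\Gamma(t)-\Gamma_\mathrm{nl}(t)=-i\sum_{k=1}^4\int_0^t e^{-i(t-t')D}\,\mathcal I_k(t')\,e^{i(t-t')D}\,\d t',
\]
with $\mathcal I_1=[W_{\mathrm{nl}}(\Gamma-\Gamma_\mathrm{nl}),\Gamma]$, $\mathcal I_2=[W_{\mathrm{nl}}(\Gamma_\mathrm{nl}),\Gamma-\Gamma_\mathrm{nl}]$, $\mathcal I_3=[\beta\widetilde S-{\bm\alpha}\cdot\widetilde{\bm\omega}+\widetilde V,\Gamma]$ and $\mathcal I_4=[\beta(\overline S-\widetilde S)-{\bm\alpha}\cdot(\overline{\bm\omega}-\widetilde{\bm\omega})+(\overline V-\widetilde V),\Gamma]$, the operator analogues of $I_1,\dots,I_4$. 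Since $D$ commutes with $-\Delta$, the conjugation $A\mapsto e^{-i\tau D}Ae^{i\tau D}$ is an isometry of $\mathfrak H^{s'}$, so it suffices to estimate the $\mathfrak H^{s'}$-norms of the $\mathcal I_k$. For $\mathcal I_1,\mathcal I_2$ I would bound the potentials in $H^{s'}$ using~\eqref{eq:est-density-s} at regularity level $s'$ (needing no positivity), so that $\|W_{\mathrm{nl}}(\Gamma-\Gamma_\mathrm{nl})\|_{H^{s'}}\le c\|\Gamma-\Gamma_\mathrm{nl}\|_{\mathfrak H^{s'}}$ and $\|W_{\mathrm{nl}}(\Gamma_\mathrm{nl})\|_{H^{s'}}\le\|W_{\mathrm{nl}}(\Gamma_\mathrm{nl})\|_{H^s}\le c\|\Gamma_\mathrm{nl}\|_{\mathfrak H^s}$, and then apply Lemma~\ref{lem:K-P-mb} with both regularity exponents equal to $s'$ — which collapses to $\|f\Gamma\|_{\mathfrak H^{s'}}+\|\Gamma f\|_{\mathfrak H^{s'}}\le c\|f\|_{H^{s'}}\|\Gamma\|_{\mathfrak H^{s'}}$, again with no positivity, and which extends componentwise to the matrix-valued potentials — together with the uniform $\mathfrak H^s$-bounds on $\Gamma$ (hypothesis) and on $\Gamma_\mathrm{nl}$ (continuity on the compact interval $[-T_1,T_2]$). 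This yields $\|\mathcal I_1(t')\|_{\mathfrak H^{s'}}+\|\mathcal I_2(t')\|_{\mathfrak H^{s'}}\le c(M)\|\Gamma(t')-\Gamma_\mathrm{nl}(t')\|_{\mathfrak H^{s'}}$, which will be closed by Gronwall's lemma; the term $\mathcal I_3$ is handled directly by Lemma~\ref{lem:K-P-mb} and the Step~1 bound on $\widetilde S,\widetilde\omega$, contributing $\le c(M)(m_\sigma^{s'-s}+m_\omega^{s'-s})$.

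The delicate step, exactly as in the one-body case, is the oscillatory term $\mathcal I_4$, which I would treat by integration by parts in $t'$, mimicking~\eqref{eq:I4-prep-s-1}--\eqref{eq:I4}. It is enough to treat the contribution of $\beta(\overline S-\widetilde S)$, where $\overline S-\widetilde S=\cos(t'\sqrt{-\Delta+m_\sigma^2})\overline S_\mathrm{in}+\int_0^{t'}\cos((t'-t'')\sqrt{-\Delta+m_\sigma^2})Q_\sigma(\Gamma,\overline S,\overline\omega)(t'')\,\d t''$; each cosine is $\partial_\tau$ of $\tfrac{\sin(\tau\sqrt{-\Delta+m_\sigma^2})}{\sqrt{-\Delta+m_\sigma^2}}$, whose extra factor $(-\Delta+m_\sigma^2)^{-1/2}$ gains a power of $m_\sigma$ in the relevant Sobolev operator norms (via~\eqref{eq.inegOp}, and~\eqref{eq:Q-s-mb-bound} for the $Q_\sigma$-part). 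Integrating by parts, $\partial_{t'}$ falls either on the free propagators $e^{\pm i(t-t')D}$, producing a commutator with $D$, or on $\Gamma(t')$ — which, through the von Neumann equation $i\partial_{t'}\Gamma=[D+W(\Gamma,\overline S,\overline\omega),\Gamma]$, produces a nested commutator with $D+W$. One must then check that the resulting operator is bounded in $\mathfrak H^{s-1}$ uniformly in the masses; this is where the refined product estimate~\eqref{eq:K-P-mb}, with its half-power of the $\mathfrak H^{s'}$-norm, and the uniform $\mathfrak H^s$-bounds of Lemma~\ref{lem:unif-exist-mb} are essential, since a crude algebra estimate would reintroduce a power of $m_\sigma$ that the gain from $(-\Delta+m_\sigma^2)^{-1/2}$ cannot compensate. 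This produces an $O(m_\sigma^{-1})$ bound in $\mathfrak H^{s-1}$, while the same integral is trivially $O(1)$ in $\mathfrak H^s$, so interpolation between $\mathfrak H^{s-1}$ and $\mathfrak H^s$ gives $O(m_\sigma^{s'-s})$ for $s'\in[s-1,s]$, and symmetrically for $m_\omega$; hence $\big\|\int_0^t e^{-i(t-t')D}\mathcal I_4(t')e^{i(t-t')D}\,\d t'\big\|_{\mathfrak H^{s'}}\le c(M)(m_\sigma^{s'-s}+m_\omega^{s'-s})$. Collecting the four bounds and applying Gronwall's inequality to $t\mapsto\|\Gamma(t)-\Gamma_\mathrm{nl}(t)\|_{\mathfrak H^{s'}}$ then yields~\eqref{eq:Gamma-loc-conv}. \emph{I expect the operator-level integration by parts in $\mathcal I_4$ to be the main obstacle: one has to verify that the nested commutators it creates, of the form $[D,[G,\Gamma]]$ and $[G,[D+W,\Gamma]]$ with $G$ the gained anti-derivative, remain bounded in $\mathfrak H^{s-1}$ uniformly in $m_\sigma,m_\omega$; everything else is essentially a transcription of the one-body argument, with the density-matrix Kato--Ponce inequality~\eqref{eq:K-P-mb} in the role of~\eqref{eq:K-P}.}
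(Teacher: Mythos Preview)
Your proposal is correct and follows essentially the same route as the paper. The only point worth flagging is your diagnosis of the $\mathcal I_4$ step. You claim that the refined product estimate~\eqref{eq:K-P-mb} (with its half-power of $\|\Gamma\|_{\mathfrak H^{s'}}$) and Lemma~\ref{lem:unif-exist-mb} are \emph{essential} there; they are not. The hypothesis of the proposition already gives the uniform bound $\|(\Gamma,\overline S,\overline\omega)\|_{\mathcal C([-T_1,T_2],\mathfrak H^s\times H^s)}\le M$, so Lemma~\ref{lem:unif-exist-mb} plays no role, and the crude product bound (Lemma~\ref{lem:K-P-mb} with both exponents equal, requiring no positivity) is enough throughout.

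The paper in fact streamlines the $\mathcal I_4$ computation by the Jacobi identity: the sum $-[D,[G,\Gamma]]+[G,[D+W,\Gamma]]$ arising from the integration by parts (with $G=\beta\tfrac{\sin(t'\sqrt{-\Delta+m_\sigma^2})}{\sqrt{-\Delta+m_\sigma^2}}\overline S_{\mathrm{in}}$) collapses to $[\Gamma,[D,G]]+[G,[W,\Gamma]]$, and then one only needs $\|[D,G]\|_{H^{s-1}}\le m_\sigma^{-1}\|\overline S_{\mathrm{in}}\|_{H^s}$, $\|G\|_{H^{s-1}}\le m_\sigma^{-1}\|\overline S_{\mathrm{in}}\|_{H^{s-1}}$, and the plain algebra property of $\mathfrak H^{s-1}$. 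Your direct route---bounding $\|[D,[G,\Gamma]]\|_{\mathfrak H^{s-1}}\lesssim\|[G,\Gamma]\|_{\mathfrak H^s}\lesssim\|G\|_{H^s}\|\Gamma\|_{\mathfrak H^s}\le m_\sigma^{-1}\|\overline S_{\mathrm{in}}\|_{H^s}M$ and similarly for $[G,[D+W,\Gamma]]$---also works, and likewise needs only the crude estimate; no power of $m_\sigma$ is ``reintroduced''. The refined half-power inequality matters for the blow-up criterion and the uniform $\mathfrak H^s$-bound (Lemmas~\ref{lem:blowup-crit-mb} and~\ref{lem:unif-exist-mb}), not here.
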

\begin{proof}
 Since the only change to the equations for $\overline{S}$, $\overline{\omega}$ is the replacement of $\Psi$ by $\Gamma$ in the source terms, we have estimates analogous to Lemma~\ref{lem:Phi-error}. We can thus write the fields as $\tilde S$, $\tilde \omega$ plus oscillatory terms, as in~\eqref{eq:tilde-fields}.
 
 We then obtain
\begin{equation}\label{eq:Gamma diff}
 \|\Gamma_{}(t) -\Gamma_{\mathrm{nl}}(t) \|_{H^{s'}}\leq \sum_{k=1}^4\left\|\int_0^te^{-i(t-t')D}I_k (t')e^{i(t-t')D}\d t'\right\|_{H^{s'}}\,,
\end{equation}
with
\[\begin{split}
I_1&= \Big[\left(
	\gamma_\omega 
		 {\bm\alpha}\cdot \big( {\bm J}(\Gamma_\mathrm{nl}) -{\bm J}(\Gamma)\big) 
		 +\gamma_\omega\big(\rho_v(\Gamma) - \rho_v(\Gamma_{\mathrm{nl}})\big)
		-\gamma_\sigma \beta\big(\rho_s(\Gamma) - \rho_s(\Gamma_{\mathrm{nl}})\big)
\right),\Gamma\Big]\,,\\
I_2&= \Big[\left(-\gamma_\omega{\bm\alpha}\cdot {\bm J}(\Gamma_\mathrm{nl})-\gamma_\sigma\beta  \rho_s(\Gamma_{\mathrm{nl}})+\gamma_\omega\rho_v(\Gamma_{\mathrm{nl}})\right),\Gamma-\Gamma_\mathrm{nl} \Big] \,,
\\
I_3&=
\left[\left(-\bm\alpha\cdot \widetilde{\bm \omega}+\beta \widetilde{S}+\widetilde{V}\right),\Gamma\right]\,,
\\
I_4&=\left[\left(\bm\alpha\cdot (\widetilde{\bm \omega}-\overline{{\bm \omega}})+\beta(\overline{S}-\widetilde{S})+\overline{V}-\widetilde{V}\right),\Gamma\right]\,.
\end{split}\]
The terms $I_2, I_2, I_3$ can be estimated in complete analogy with the proof of Proposition~\ref{prop.conv}. The term $I_4$ requires integration by parts. For example, if we consider the contribution due to $\cos\left(t\sqrt{-\Delta + m^2_\sigma}\right)\overline{S}_\mathrm{in}$ (which occurs in $\overline{S}-\tilde S$, see Equation~\eqref{eq:tilde-fields}), this gives
\begin{align*}
 &\int_0^t e^{-i(t-t')D}\Big[\beta \cos\left(t'\sqrt{-\Delta + m^2_\sigma}\right)\overline{S}_\mathrm{in}, \Gamma(t')\Big]e^{i(t-t')D}\d t'  \\
 &= \Big[\beta \frac{\sin(t\sqrt{-\Delta + m^2_\sigma})}{\sqrt{-\Delta + m^2_\sigma}} \overline{S}_\mathrm{in}, \Gamma(t) \Big] \\
 &\qquad - i\int_0^t e^{-i(t-t')D}\Big[D,\Big[\beta \frac{\sin(t\sqrt{-\Delta + m^2_\sigma})}{\sqrt{-\Delta + m^2_\sigma}}\overline{S}_\mathrm{in}, \Gamma(t')\Big]\Big]e^{i(t-t')D}\d t' \\
 &\qquad + i\int_0^t e^{-i(t-t')D}\Big[\beta \frac{\sin(t\sqrt{-\Delta + m^2_\sigma})}{\sqrt{-\Delta + m^2_\sigma}}\overline{S}_\mathrm{in}, \Big[D+W(\Gamma, \overline{S}, \overline{\omega}),\Gamma(t')\Big]\Big]e^{i(t-t')D}\d t'. \\
\end{align*}
By the Jacobi identity, the sum of commutators in the integrand equals
\begin{equation*}
\Big[\Gamma(t'), \Big[D,\beta \frac{\sin(t\sqrt{-\Delta + m^2_\sigma})}{\sqrt{-\Delta + m^2_\sigma}}\overline{S}_\mathrm{in}\Big]\Big]
 + \Big[\beta \frac{\sin(t\sqrt{-\Delta + m^2_\sigma})}{\sqrt{-\Delta + m^2_\sigma}}\overline{S}_\mathrm{in}, \Big[W(\Gamma, \overline{S}, \overline{\omega}),\Gamma(t')\Big]\Big].
\end{equation*}
Its norm in $\mathfrak{H}^{s-1}$, $s-1>\tfrac32$, can thus be bounded by twice
\begin{equation*}
 m_\sigma^{-1}\|S_\mathrm{in}\|_{H^s} \|\Gamma \|_{\mathfrak{H}^{s-1}} + 2m_\sigma^{-1}\|S_\mathrm{in}\|_{H^{s-1}} \|W(\Gamma, \overline{S}, \overline{\omega})\|_{H^{s-1}} \|\Gamma \|_{\mathfrak{H}^{s-1}}.
\end{equation*}
This implies the desired bound on this term using interpolation as in Proposition~\ref{prop.conv}. The calculations for the other terms are analogous, and the proof is then completed by Gronwall's Lemma, as in Proposition~\ref{prop.conv}.

\end{proof}
\subsection*{Proof of Theorem~\ref{thm.main-mb}}
  Let $\tfrac52 <s'<s$, $0<T_1<T_{\mathrm{min}}^{\mathrm{nl}}$, $0<T_2<T_{\mathrm{max}}^{\mathrm{nl}}$. As in the proof of Theorem~\ref{thm.main1}, we first show that there exists $\mu>0$ such that for all $m_\omega,m_\sigma\geq \mu$, we have $T_{\mathrm{min}}>T_1$ and $T_{\mathrm{max}}>T_2$. To do so, let $M>\|(\Gamma_{\mathrm{in}}, \overline{S}_\mathrm{in}, \overline{\omega}_\mathrm{in})\|_{\gH^{s'}\times H^{s'}}$ and define 
  $$T_2' = \sup\{0<t<\min(T_2,T_{\mathrm{max}}),\,\|(\Gamma,\overline{S}, \overline{\omega})\|_{\mathcal{C}([0,t], \gH^{s'}\times H^{s'})}\leq M\}\,.$$
  As in the proof of Theorem~\ref{thm.main1}, we prove that for $M>0$ chosen appropriately, we have $T_2'=T_2$ for $\mu$ large enough, and we do so by contradiction. If $T_2'<T_2$, this again implies that 
  $$\|(\Gamma,\overline{S}, \overline{\omega})\|_{\cC([0,T_2'],\gH^{s'}\times H^{s'})}=M.$$
  Then, by Lemma~\ref{lem:unif-exist-mb}, there exists $M'>0$ such that
\begin{equation}
 \|(\Gamma, \overline{S}, \overline{\omega})\|_{\mathcal{C}([0,T_2'],\mathfrak{H}^s\times H^s)} \leq M'. 
\end{equation}
Using Lemma~\ref{lem:red-eq-mb}, the splitting of $\overline{S}, \overline{\omega}$ into ``small'' and oscillatory parts as in Lemma~\ref{lem:Phi-error} and Gronwall's inequality, we obtain the inequality
\begin{align}
 &\|(\overline{S},\overline{\omega})\|_{\mathcal{C}([0,T_2'],H^{s'})} \notag \\
 &\leq  \Big(\mu^{s'-s}c_{\eqref{eq:S-Gamma-bound}}(M') + \|(\overline{S}_\mathrm{in},\overline{\omega}_\mathrm{in})\|_{H^{s'}} + c_{\eqref{eq:Q-s-mb-bound}} T_2 \|\Gamma\|_{\mathcal{C}([0,T_2'],\mathfrak{H}^{s'})}^2 \Big) e^{c_{\eqref{eq:Q-s-mb-bound}} T_2 \|\Gamma\|_{\mathcal{C}([0,T_2'],\mathfrak{H}^{s'})}}
 \label{eq:S-Gamma-bound}
\end{align}
for some $c_{\eqref{eq:S-Gamma-bound}}(M')>0$.
We then choose 
\begin{equation}
 f(x)=x+ (\|(\overline{S}_\mathrm{in},\overline{\omega}_\mathrm{in})\|_{H^{s'}} + c_{\eqref{eq:Q-s-mb-bound}} T_2x^2)e^{c_{\eqref{eq:Q-s-mb-bound}} T_2 x}
\end{equation}
and 
\begin{equation}
 M=2  f\big(\|\Gamma_\mathrm{nl}\|_{\mathcal{C}([0,T_2],\mathfrak{H}^{s'})}\big).
\end{equation}
Now, by Proposition~\ref{prop.conv-mb} and continuity of $f$,
\begin{align*}
 \| (\Gamma,\overline{S}, \overline{\omega}) \|_{\mathcal{C}([0,T_2'],\mathfrak{H}^{s'}\times H^{s'})}
 &\leq f\big(\|\Gamma\|_{\mathcal{C}([0,T_2'],\mathfrak{H}^{s'})}\big) + \mu^{s'-s}c_{\eqref{eq:S-Gamma-bound}}(M')<M 
\end{align*}
if $\mu$ is large enough. This implies that $T_2'=T_2$ and then Proposition~\ref{prop.conv-mb} gives convergence $\Gamma\to \Gamma_\mathrm{nl}$ on $[0,T_2]$.

\qed

\begin{remark}\label{rk:rate-mb}
As in the one-body case (Remark \ref{rk:rate}), the above proof provides a quantitative rate of convergence. 
\end{remark}

\section*{Acknowledgements}

This work was supported by the Agence Nationale de la Recherche (ANR) through the project DYRAQ ANR-17-CE40-0016 and partly supported by the IRP - CNRS project SPEDO (for L.LT). 
J.L. acknowledges that the ICB is supported by the EUR-EIPHI Graduate School (Grant No. ANR-17-EURE-0002).
%

%

\end{document}